\DeclareMathAlphabet\mathbold{OML}{cmm}{b}{it}
\newtheorem{theorem}{Theorem}[section]
\newtheorem{lemma}[theorem]{Lemma}
\newtheorem{remark}[theorem]{Remark}
\newtheorem{assumption}[theorem]{Assumption}
\numberwithin{equation}{section}
\numberwithin{table}{section}
\numberwithin{figure}{section}
\newcommand{\bfM}{\mathbf{M}}
\newcommand{\bfQ}{\mathbf{Q}}
\newcommand{\bftau}{\boldsymbol{\tau}}
\newcommand{\bfgamma}{\boldsymbol{\gamma}}
\newcommand{\bfbeta}{\boldsymbol{\beta}}
\newcommand{\bfzeta}{\boldsymbol{\zeta}}
\newcommand{\bfeta}{\boldsymbol{\eta}}
\newcommand{\bfpsi}{\boldsymbol{\psi}}
\newcommand{\bfI}{\mathbf{I}}
\newcommand{\bfx}{\mathbf{x}}
\newcommand{\bfdiv}{\mathbf{div}}
\newcommand{\Div}{\text{div}}
\newcommand{\matrixD}{\mathcal{D}}
\newcommand{\bfepsilon}{\boldsymbol{\epsilon}}
\newcommand{\bfR}{\boldsymbol{R}}
\newcommand{\bft}{\boldsymbol{t}}
\newcommand{\bfn}{\boldsymbol{n}}
\newcommand{\grad}{\text{grad~}}
\newcommand{\curl}{\text{curl~}}
\newcommand{\curlh}{\text{curl}_h}
\newcommand{\rot}{\text{rot}}
 \author[Guozhu Yu]{Guozhu Yu$^1$}
 \author[Xiaoping Xie]{Xiaoping Xie$^{2,*}$} \thanks{*: Corresponding author.}
 \author[Yuanhui Guo]{Yuanhui Guo$^3$}
 \title[Analysis of  hybrid  methods of mixed-shear-projected 
elements]
 {Analysis of hybrid  methods of mixed-shear-projected triangular and quadrilateral 
elements for Reissner-Mindlin plates}
 \dedicatory{$^1$School of Mathematics, Southwest Jiaotong University, Chengdu  610031, China\\
             $^2$School of Mathematics, Sichuan University, Chengdu  610064, China\\
             $^3$Experiment Center, China West Normal University, Nanchong, Sichuan 637009, China}
 \thanks{Email addresses: yuguozhumail@gmail.com (G. Yu), xpxie@scu.edu.cn (X. Xie), gyh6209@sina.com (Y. Guo)}
\begin{document}
\setlength{\abovecaptionskip}{0pt}
\setlength{\belowcaptionskip}{0pt}
\begin{abstract}
It is known that the 3-node hybrid triangular element MiSP3 and 4-node hybrid quadrilateral element MiSP4 presented by Ayad, Dhatt  and Batoz
(Int. J. Numer. Meth. Engng 1998, 42: 1149-1179) for Reissner-Mindlin plates behave robustly in
numerical benchmark tests.  These two elements are based on Hellinger-Reissner variational principle, where  continuous piecewise linear/isoparametric bilinear  interpolations, as well as the mixed shear interpolation/projection technique of MITC family,  are used for  the approximations of displacements, and piecewise-independent equilibrium modes are used for the approximation of bending moments/shear stresses.  We show that the MiSP3 and MiSP4
elements are uniformly stable with respect to the plate thickness and thus free from shear-locking.


\end{abstract}


\maketitle

\section{Introduction}
Due to avoidance of $C^1$-continuity difficulty, the
Reissner-Mindlin (R-M) plate model is today the dominating
two-dimensional model used to calculate the bending of a thick/thin
three-dimensional plate of thickness $t$.
 It's well-known that for values of $t$ close to zero,
the standard low-order finite element discretization of this model
suffers from shear locking (\cite{Arnold.D1981,Hughes.T1987}).

To overcome the shear locking difficulty and derive `locking-free'
or robust plate bending elements that are valid for the analysis of
thick and thin plates, significant efforts are devoted to the
development of simple and efficient triangular and quadrilateral
finite elements 
in the past few
decades. The most common approach is to modify the variational
formulation with some reduction operator so as to weaken the
Kirchhoff constraint (see \cite{Arnold.D;Falk.R1989, Ayad.R;Dhatt.G;Batoz.J1998,
bathe1989mitc7, Bathe.K;Dvorkin.E1985,  batoz1980study, Batoz.J;Bentahar.M1982,
Boffi2008,Braess.D2001, Brezzi.F;Bathe.KJ;Fortin.M1989,    Brezzi.F;Fortin.M1991, Brezzi.F;Fortin.M;Stenberg.R1991,  DURAN.R;LIBERMAN.E1992, FALK.R;TU.T2000, Hu.J;Ming;Shi2003, Hu.J;Shi.Z2007, Hu.J;Shi.Z2008, Hu.J;Shi.Z2009, Hughes;1978,hughes1981linear, Hughes;1977, hughes1981finite,  LOVADINA.C2005, macneal1982derivation,Malkus;1978, MING.P;SHI.Z2001,
MING.P;SHI.Z2005, Ming.P;Shi.Z2006,
papadopoulos1990triangular,Pitkaranta.J;Suri.M1996,Zhang.Z;Zhang.S1994,
 zienkiewicz1990plate,Zienkiewicz;1971} and the references therein).

Among the existing elements, the family of finite elements named mixed interpolated tensorial components (MITC)  by Bathe et. al \cite{bathe1989mitc7,Bathe.K;Dvorkin.E1985}
is one of the most attractive representative.
By virtue of an independent shear
approximation and a discrete Mindlin technique along edges, MITC elements define the shear strains
in terms of the edge tangential strains that are projected on the element degrees of freedom.
As the lowest order quadrilateral MITC element, the 4-node plate element MITC4 is very likely the most used
in practice. Unfortunately, there is no so called low order triangular `MITC3' element. In other words, the 3-node
plate element MITC3 defined with the same technique of shear interpolation
produces very unsatisfactory
results, and, in general, it needs some kind of stabilization \cite{Brezzi.F;Fortin.M;Stenberg.R1991}.

 With the same technique of shear interpolation as in the
 element MITC family, Ayad, Dhatt and Batoz \cite{Ayad.R;Dhatt.G;Batoz.J1998} presented an
improved formulation for obtaining locking-free triangular and quadrilateral
elements, which are called MiSP3 and MiSP4 elements respectively. It is based on
Hellinger-Reissner variational principle,  including variables
of displacements, shear stresses and bending moments. For MiSP3 element continuous
piecewise linear interpolation is used for the approximations of displacements,
and a  piecewise-independent equilibrium mode is used for the approximation of bending moments/shear stresses.
While for MiSP4 element it adopts continuous
isoparametric bilinear displacement interpolation. The numerical experiments in \cite{Ayad.R;Dhatt.G;Batoz.J1998}
 showed that the MiSP3 and MiSP4 elements  both avoid locking
phenomenon. However, so far there is no uniform stability analysis for them
with respect to plate thickness.



The main goal of this work is to establish uniform convergence for
triangular MiSP3 element and quadrilateral MiSP4 element.
%
The key to the analysis of  MiSP3 is the
discrete Helmholtz decomposition in Lemma \ref{lemma: DiscreteDecomposition}, while for MiSP4
we use the property of the shear interpolation (Lemma \ref{lemma projection}) proved in \cite{Duran-Hernandez-Nieto-Liberman-Rodriguez2004}.

We arrange the rest of this paper as follows. In Section 2 we give
weak formulations of the model. Section 3 introduces the finite element spaces for MiSP3 and MiSP4 elements.
 We derive in Sections 4-5  uniform error
estimates for MiSP3 and MiSP4 elements, respectively. Finally in
Section 6 we provide some numerical results to verify the
theoretical results.

For convenience, throughout the paper we use the notation $a\lesssim
b$ to represent that there exists a generic positive constant C,
independent of the mesh parameter $h$ and the plate thickness $t$,
such that $a \le Cb$. We also abbreviate $a \lesssim b\lesssim$ as
$a \thickapprox  b$.

We will also use various standard differential operators:
$$\grad r=(\frac{\partial r}{\partial x},\frac{\partial r}{\partial y})^T,\
  \curl p=(\frac{\partial p}{\partial y},-\frac{\partial p}{\partial x})^T,\
  \Div \bfpsi=\frac{\partial \bfpsi_1}{\partial x}+\frac{\partial \bfpsi_2}{\partial y},\
  \rot \bfpsi=\frac{\partial \bfpsi_1}{\partial y}-\frac{\partial \bfpsi_2}{\partial x}.$$

\section{weak problem}
The Reissner-Mindlin model for the bending of a clamped isotropic
elastic plate in equilibrium reads as: Find $(w,\bfbeta)\in
H_0^1(\Omega)\times H_0^1(\Omega)^2$ such that
\begin{eqnarray}
- \bfdiv \matrixD\bfepsilon(\bfbeta) -\lambda t^{-2}(\grad w-\bfbeta)=0 &\mbox{in} &\Omega,\label{eq:WeakForm1-a}\\
 -\lambda t^{-2}\Div(\grad w-\bfbeta)=g  &\mbox{in} &\Omega.\label{eq:WeakForm1-b}
\end{eqnarray}
Here $\Omega\subset \mathbb{R}^{2}$, assumed to be a convex polygon for
simplicity, is the region occupied by the midsection of the plate
with plate thickness $t$, $w$ and $\bfbeta$ denote respectively the
transverse displacement of the midplane and the rotation of the
fibers normal to it, $\bfepsilon(\bfbeta)$ is the symmetric part of
the gradient of $\bfbeta$, $g$ is the transverse loading, $\matrixD$
is the elastic module tensor defined by
$$\matrixD\bfQ= \frac{E}{12(1-\nu^2)}[(1-\nu)\bfQ+\nu\mbox{tr}(\bfQ)\bfI]$$
with $\bfQ$ a $2\times 2$ symmetric matrix, $\lambda=\frac{\kappa
E}{2(1+\nu)}$  with $E$ the Young's modulus,  $\nu$ the Poisson's
ratio, and $\kappa=\frac{5}{6}$ the shear correction factor.

Set
$$\mathbb{M}:= L^2(\Omega )^{2\times 2}_{sym},\quad
\Gamma:=L^2(\Omega)^2, \quad W:=H_0^1(\Omega), \quad \Theta
:=H_0^1(\Omega)^2.$$
 When introducing  the shear stress vector
$\bfgamma=\lambda t^{-2}(\grad w-\bfbeta)$ and the bending moment
tensor $\bfM =-\matrixD{\bfepsilon(\bfbeta)}$, the model problem
(\ref{eq:WeakForm1-a})-(\ref{eq:WeakForm1-b}) changes into the
following system: Find $(\bfM,\bfgamma,w,\bfbeta)\in\mathbb{M}\times
\Gamma \times W\times \Theta$ such that
\begin{eqnarray}\label{eq:StrongForm2}
 {\bfdiv}\bfM -\bfgamma=0 &\mbox{in} &\Omega,\label{1st-Euler}\\
 \Div\bfgamma+g=0  &\mbox{in} &\Omega,\\
 \bfM+\matrixD\bfepsilon(\bfbeta)=0 &\mbox{in} &\Omega,\\
 \bfgamma-\lambda t^{-2}(\grad w-\bfbeta)=0 &\mbox{in}
 &\Omega.
\end{eqnarray}
The variational formulation of this system reads: Find
$(\bfM,\bfgamma,w,\bfbeta)\in\mathbb{M}\times \Gamma \times W\times
\Theta$ such that
\begin{eqnarray}
a(\bfM,\bfgamma;\bfQ,\bftau)+b(\bfQ,\bftau;w,\bfbeta)=0 \ \quad for\
all\ (\bfQ,\bftau)\in \mathbb{M}\times\Gamma,\label{eq:WeakForm2-a}
\\
b(\bfM,\bfgamma;v,\bfzeta)=-\int_{\Omega}gvd\bfx\ \quad for\
all\ (v,\bfzeta)\in W\times\Theta, \label{eq:WeakForm2-b}
\end{eqnarray}
where
 the bilinear forms
\begin{eqnarray}
\nonumber a(\cdot,\cdot;\cdot,\cdot):&(L^2(\Omega)^{2\times
2}_{sym}\times L^2(\Omega)^2)\times (L^2(\Omega)^{2\times
2}_{sym}\times L^2(\Omega)^2) &\to \mathbb{R},
\\\nonumber
b(\cdot,\cdot;\cdot,\cdot):&(L^2(\Omega)^{2\times 2}_{sym}\times
L^2(\Omega)^2)\times (H_0^1(\Omega) \times H_0^1(\Omega)^2)
 &\to \mathbb{R}
 \end{eqnarray}
are defined by
\begin{eqnarray}
a(\bfM,\bfgamma;\bfQ,\bftau):=\int_{\Omega}\bfM:\matrixD^{-1}\bfQ
d\bfx+\frac{t^2}{\lambda}\int_{\Omega}\bfgamma \cdot \bftau
d\bfx,
\\ b(\bfQ,\bftau;v,\bfzeta):=\int_{\Omega}\bfQ:\bfepsilon(\bfzeta)d\bfx-\int_{\Omega}\bftau \cdot (\grad
v-\bfzeta)d\bfx.
 \end{eqnarray}

In the latter analysis we will use the Helmholtz theorem: for any $\bftau\in L^2(\Omega)^2$,
\begin{equation}\label{eq:Helmholtz}
\bftau=\grad s+\curl q ,\mbox{~with~}(s,q)\in H_0^1(\Omega)\times\hat{H}^1(\Omega),
\end{equation}
where $$\hat{H}^1(\Omega):=\{ q\in {H}^1(\Omega): \int_\Omega q d\bfx=0\}.$$
Then the shear strain vector $\bfgamma$ can be decomposed as
\begin{equation}\label{eq:Helmholtz-gamma}
\bfgamma=\grad r+\curl p
\end{equation}
with $(r,p)\in H_0^1(\Omega)\times\hat{H}^1(\Omega)$. Moreover, since $\bfgamma\cdot\bft=0$ on
$\partial \Omega$, the decomposition \eqref{eq:Helmholtz-gamma} indicates that $p$ satisfies
$$\grad p\cdot \bfn=0 \quad \partial \Omega,$$
where $\bft$, $\bfn$ are respectively  the unit tangent vector and unit outer normal vector  along $\partial \Omega$. Then the model problem (\ref{eq:WeakForm1-a})-(\ref{eq:WeakForm1-b})
is also equivalent to the following system:

Find $(r,\bfbeta,p,w)\in H_0^1(\Omega)\times H_0^1(\Omega)^2\times \hat{H}^1(\Omega)\times H_0^1(\Omega)$ such that
\begin{eqnarray}
 &(\grad r, \grad v)=(g,v),\quad \forall v\in H_0^1(\Omega),\label{eq:WeakForm3-a}\\
 &(\bfepsilon(\bfbeta),\matrixD\bfepsilon(\bfzeta))-(\curl p,\bfzeta)=(\grad r,\bfzeta),\quad  \forall \bfzeta\in H_0^1(\Omega)^2, \label{eq:WeakForm3-b}\\
 &-(\bfbeta,\curl q)-\frac{t^{2}}{\lambda}(\curl p,\curl q)=0, \quad\forall q\in \hat{H}^1(\Omega),\label{eq:WeakForm3-c}\\
 &(\grad w,\grad s)=(\bfbeta+\frac{t^{2}}{\lambda}\grad r,\grad s),\quad \forall s\in H_0^1(\Omega). \label{eq:WeakForm3-d}
\end{eqnarray}

The following regularity results were proved by Arnold and Falk \cite{Arnold.D;Falk.R1989}.
\begin{theorem}
Let $\Omega$ be a convex polygon or smoothly bounded domain in the plane. For any $t\in (0,1]$ and any $g\in L^2(\Omega)$,
there exists a unique quadruple $(r,\bfbeta,p,w)\in H_0^1(\Omega)\times H_0^1(\Omega)^2\times \hat{H}^1(\Omega)\times H_0^1(\Omega)$
solving problem (\ref{eq:WeakForm3-a})-(\ref{eq:WeakForm3-d}). Moreover, there exists a constant $C$ independent of $t$ and $g$, such that
 \begin{equation}
 \|w\|_{2}+\|\bfbeta\|_{2}+\|r\|_2+\|p\|_1+t\|p\|_2\le C\|g\|_{0}.
\end{equation}
\end{theorem}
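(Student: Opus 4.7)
The system \eqref{eq:WeakForm3-a}--\eqref{eq:WeakForm3-d} has a nearly triangular structure that suggests a sequential solution strategy. First, I would handle \eqref{eq:WeakForm3-a}, which is a standard Dirichlet Poisson problem $-\Delta r=g$ in $\Omega$. Lax--Milgram yields a unique $r\in H_0^1(\Omega)$, and since $\Omega$ is a convex polygon, standard elliptic regularity on such domains gives $r\in H^2(\Omega)$ together with $\|r\|_2\lesssim \|g\|_0$. This handles the first two contributions to the target bound and supplies $\grad r\in L^2$ as data for the next step.

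The heart of the proof is the coupled system \eqref{eq:WeakForm3-b}--\eqref{eq:WeakForm3-c} for $(\bfbeta,p)$, which is a saddle-point problem of Stokes type with a $t$-dependent penalty. I would cast it in the Brezzi framework. Coercivity of $(\matrixD\bfepsilon(\cdot),\bfepsilon(\cdot))$ on $H_0^1(\Omega)^2$ follows from Korn's inequality together with the uniform positive definiteness of $\matrixD$. The inf-sup condition for the pairing $(\curl q,\bfzeta)$ between $\hat H^1(\Omega)$ and $H_0^1(\Omega)^2$ reduces, via integration by parts $(\curl q,\bfzeta)=-(q,\rot\bfzeta)$ and the surjectivity of $\rot:H_0^1(\Omega)^2\to L_0^2(\Omega)$ (an equivalent of the standard divergence inf-sup in two dimensions), to a known result. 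The penalty term $\frac{t^2}{\lambda}(\curl p,\curl q)$ is positive semidefinite, so the perturbed Brezzi theorem furnishes existence, uniqueness, and the uniform a priori bound $\|\bfbeta\|_1+\|p\|_1+t\|p\|_{\hat H^1_{\text{extra}}}\lesssim \|\grad r\|_0\lesssim \|g\|_0$, where the additional control on $p$ will be sharpened below.

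To bootstrap to $H^2$ regularity I would view \eqref{eq:WeakForm3-b} as an elasticity problem $-\bfdiv(\matrixD\bfepsilon(\bfbeta))=\grad r+\curl p$ with clamped boundary conditions, and \eqref{eq:WeakForm3-c} as a Neumann Laplacian $-\tfrac{t^2}{\lambda}\Delta p=\rot\bfbeta$ with $\grad p\cdot\bfn=0$ and $\int_\Omega p=0$. The Neumann problem on a convex polygon gives $t^2\|p\|_2\lesssim\|\rot\bfbeta\|_0\lesssim \|\bfbeta\|_1$, which is not by itself the desired estimate, and plugging $\curl p$ (merely in $L^2$) into the elasticity equation only gives $\bfbeta\in H^2$ with a $t$-dependent constant. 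Closing this circular dependence is the main obstacle. I would resolve it by a limit/decomposition argument in the spirit of Arnold--Falk: introduce the Kirchhoff limit $(r_0,\bfbeta_0,p_0,w_0)$ formally obtained at $t=0$, write $\bfbeta=\bfbeta_0+\tilde\bfbeta$, $p=p_0+\tilde p$, use the biharmonic regularity of $w_0$ on a convex polygon to get $\bfbeta_0\in H^2$ and $p_0\in H^1$ independently of $t$, and then show that the correction $(\tilde\bfbeta,\tilde p)$ satisfies a perturbed problem whose forcing is $O(t^2)$, from which the combined bound $\|\bfbeta\|_2+\|p\|_1+t\|p\|_2\lesssim\|g\|_0$ follows.

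Finally, \eqref{eq:WeakForm3-d} is again a Dirichlet Poisson problem for $w$ with data $\bfbeta+\tfrac{t^2}{\lambda}\grad r\in H^1(\Omega)$. Convex-polygon regularity gives $w\in H^2(\Omega)\cap H_0^1(\Omega)$ with $\|w\|_2\lesssim \|\bfbeta\|_1+t^2\|r\|_2\lesssim \|g\|_0$, closing the estimate. Uniqueness for the whole quadruple follows from the uniqueness at each sequential step, since $r$ is determined by $g$, $(\bfbeta,p)$ by $r$ via the Brezzi-stable problem, and $w$ by $\bfbeta$ and $r$. The decisive step, as noted, is the limit argument underlying the $t$-uniform bound on $\|\bfbeta\|_2$ and $t\|p\|_2$; everything else is a standard application of elliptic regularity.
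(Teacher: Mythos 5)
The first thing to note is that the paper does not prove this statement at all: it is quoted directly from Arnold and Falk \cite{Arnold.D;Falk.R1989} (``The following regularity results were proved by Arnold and Falk''), so there is no internal proof to compare against, and your proposal is in effect an attempted reconstruction of that cited result. The structure you identify is the right one and matches the cited argument's skeleton: solve \eqref{eq:WeakForm3-a} for $r$ by Lax--Milgram plus convex-polygon $H^2$ regularity, treat \eqref{eq:WeakForm3-b}--\eqref{eq:WeakForm3-c} as a penalized Stokes-type saddle point for $(\bfbeta,p)$ (Korn's inequality, the $\rot$/div inf-sup, positive semidefinite penalty), and finish with a Poisson problem for $w$. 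These routine parts are fine.

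The decisive step, however, is only gestured at, and as sketched it does not close. Perturbed Brezzi theory with the pairing $(\curl q,\bfzeta)=(q,\rot\bfzeta)$ gives uniform control only of $\|p\|_0$ and $t\|\grad p\|_0$, not of $\|p\|_1$; your bootstrap (elasticity regularity for $\bfbeta$ with datum $\grad r+\curl p$) requires $\|\curl p\|_0\lesssim\|g\|_0$ uniformly, which is exactly what is missing, so the $\|\bfbeta\|_2$ and $\|p\|_1$ bounds are circular unless one invokes regularity of the perturbed Stokes-type system itself, which is the actual content of Arnold--Falk's proof. Moreover, the ``Kirchhoff limit plus $O(t^2)$ correction'' device cannot deliver $t\|p\|_2\lesssim\|g\|_0$ as stated: on a convex polygon the limit multiplier $p_0$ is a Stokes-type pressure and is in general only in $H^1$, so the split $p=p_0+\tilde p$ leaves $t\|p_0\|_2$ uncontrolled, and the correction is not $O(t^2)$ in the relevant norms because of the boundary layer --- which is precisely why $\|p\|_2$ itself is not uniformly bounded while $t\|p\|_2$ is. A correct completion needs something sharper, e.g. an estimate of the type $\|\rot\bfbeta\|_0=\|\rot(\bfbeta-\bfbeta_0)\|_0\lesssim t\|g\|_0$ combined with the Neumann problem relating $\Delta p$ to $(\lambda/t^{2})\rot\bfbeta$, i.e. the genuine singular-perturbation analysis of \cite{Arnold.D;Falk.R1989}, which your outline defers to rather than supplies. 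Citing that reference, as the authors do, is the legitimate proof here; reproving the theorem would require carrying out that analysis in full.
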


With the above theorem, we obtain some further results:
\begin{theorem} \label{Th:Regularity}
Let $(r,\bfbeta,p,w)$ be the solution of the problem (\ref{eq:WeakForm3-a})-(\ref{eq:WeakForm3-d}). Then the following three conclusions (i)-(iii) hold.

\noindent (i)  \ The quadruple
$(\bfM=- \mathcal{D}\bfepsilon(\bfbeta),\bfgamma=\grad r+\curl p,w,\bfbeta)\in \mathbb{M}\times \Gamma\times W\times
\Theta$ is the unique solution of the problem (\ref{eq:WeakForm2-a})-(\ref{eq:WeakForm2-b});

\noindent (ii) \ If $\bfM\in {\bf H}({\bfdiv};\Omega):=\{\bfQ\in L^{2}(\Omega)^{2\times2}_{sym}:\ {\bfdiv}\bfQ \in L^{2}(\Omega)^{2}\}$,
then the  equilibrium relation (\ref{1st-Euler}) holds;

\noindent (iii) \ Provided that $g\in L^{2}(\Omega)$, it holds
\begin{equation}\label{eq:Regularity}
 \|w\|_{2}+\|\bfbeta\|_{2}+\|\bfM\|_{1}+\|\bfgamma\|_{0}+t\|\bfgamma\|_{1}+\|r\|_2+\|p\|_1+t\|p\|_2\lesssim
\|g\|_{0}.
\end{equation}
\end{theorem}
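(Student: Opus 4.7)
The plan is to derive all three parts from the equivalence between the primal formulation (\ref{eq:WeakForm3-a})--(\ref{eq:WeakForm3-d}) and the Hellinger--Reissner formulation (\ref{eq:WeakForm2-a})--(\ref{eq:WeakForm2-b}), together with the regularity theorem just quoted; no new PDE analysis is required.

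For (i), starting from the given solution $(r,\bfbeta,p,w)$, I would set $\bfM:=-\matrixD\bfepsilon(\bfbeta)$ and $\bfgamma:=\grad r+\curl p$ and verify (\ref{eq:WeakForm2-a}) and (\ref{eq:WeakForm2-b}) by splitting the test functions. Substituting $\bfM=-\matrixD\bfepsilon(\bfbeta)$ into the first term of $a(\bfM,\bfgamma;\bfQ,\bftau)$ cancels the $(\bfQ,\bfepsilon(\bfbeta))$ contribution of $b(\bfQ,\bftau;w,\bfbeta)$, so (\ref{eq:WeakForm2-a}) reduces to showing $\frac{t^{2}}{\lambda}(\bfgamma,\bftau)=(\bftau,\grad w-\bfbeta)$ for every $\bftau\in\Gamma$. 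I would then use the Helmholtz decomposition \eqref{eq:Helmholtz} to write $\bftau=\grad s+\curl q$ with $(s,q)\in H_0^1(\Omega)\times\hat{H}^1(\Omega)$, exploit the $L^2$-orthogonalities $(\grad r,\curl q)=(\curl p,\grad s)=(\curl q,\grad w)=0$ (which follow from $r,s,w\in H_0^1(\Omega)$), and thereby split the identity into its $(s,0)$ and $(0,q)$ components, which are precisely (\ref{eq:WeakForm3-d}) and (\ref{eq:WeakForm3-c}) respectively. Equation (\ref{eq:WeakForm2-b}) is handled by testing separately with $(v,0)$ and $(0,\bfzeta)$: the first reduces, via $(\curl p,\grad v)=0$, to (\ref{eq:WeakForm3-a}), and the second follows from (\ref{eq:WeakForm3-b}) after rewriting $\matrixD\bfepsilon(\bfbeta)=-\bfM$. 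Uniqueness is inherited from the previous theorem, since any solution of the Hellinger--Reissner system delivers, via Helmholtz decomposition of its shear component, a quadruple obeying (\ref{eq:WeakForm3-a})--(\ref{eq:WeakForm3-d}).

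For (ii), testing (\ref{eq:WeakForm2-b}) with $v=0$ yields $\int_\Omega\bfM:\bfepsilon(\bfzeta)\,d\bfx+\int_\Omega\bfgamma\cdot\bfzeta\,d\bfx=0$ for all $\bfzeta\in H_0^1(\Omega)^2$; the extra assumption $\bfM\in{\bf H}(\bfdiv;\Omega)$ licenses an integration by parts (using the symmetry of $\bfM$ and $\bfzeta|_{\partial\Omega}=0$) converting the first integral to $-\int_\Omega(\bfdiv\bfM)\cdot\bfzeta\,d\bfx$, whence $\bfdiv\bfM-\bfgamma=0$ a.e.\ in $\Omega$. For (iii) the previous theorem already supplies the bounds on $\|w\|_2$, $\|\bfbeta\|_2$, $\|r\|_2$, $\|p\|_1$ and $t\|p\|_2$; combining them with the definitions of $\bfM,\bfgamma$ and the $H^1$-boundedness of the constant-coefficient operator $\matrixD$ gives $\|\bfM\|_1\lesssim\|\bfbeta\|_2$, $\|\bfgamma\|_0\lesssim\|r\|_1+\|p\|_1$, and $t\|\bfgamma\|_1\le t\|r\|_2+t\|p\|_2\le\|r\|_2+t\|p\|_2$, the last inequality using $t\le 1$; summing these delivers (\ref{eq:Regularity}). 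The only step requiring real care is the bookkeeping in (i) --- keeping track of which cross terms in the Helmholtz decomposition vanish by boundary conditions and which feed back into the primal equations --- while (ii) and (iii) amount to one integration by parts and a routine assembly of the cited bounds.
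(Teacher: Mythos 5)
Your proposal is correct and follows exactly the route the paper intends: the paper states Theorem \ref{Th:Regularity} without proof as a routine consequence of the Arnold--Falk theorem, and your verification — Helmholtz splitting of the test function $\bftau$ with the $L^2$-orthogonalities $(\grad\cdot,\curl\cdot)=0$ for $H_0^1$ potentials to recover (\ref{eq:WeakForm3-a})--(\ref{eq:WeakForm3-d}), one integration by parts using the symmetry of $\bfM$ for (ii), and assembly of the cited bounds with $t\le 1$ for (iii) — is precisely that argument, carried out correctly.
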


\section{Finite element formulations for MiSP method}
This section is devoted to the finite element formulations of the
MiSP element on triangular and quadrilateral meshes. Let $\mathcal{T}_h$ be a
regular family of finite element subdivisions of the polygonal
domain $\Omega$. We denote by $h_{K}$  the diameter of a triangle or a quadrilateral $K\in \mathcal{T}_{h}$, and denote $h:=\max_{K\in \mathcal{T}_{h}}h_{K}$.

 Let $\mathbb{M}_{h}\subset \mathbb{M}$, $\Gamma_{h}\subset\Gamma$,
$W_{h}\subset W$, $\Theta_{h}\subset \Theta$
 be finite dimensional spaces for the
bending moment, shear stress, transverse displacement, and rotation
approximations. Then the corresponding finite element scheme for the
problem (\ref{eq:WeakForm2-a})-(\ref{eq:WeakForm2-b}) reads as: Find
$(\bfM_h,\bfgamma_h,w_h,\bfbeta_h)\in\mathbb{M}_h\times \Gamma_h
\times W_h\times \Theta_h$ such that
\begin{eqnarray}
a(\bfM_h,\bfgamma_h;\bfQ_h,\bftau_h)+\tilde{b}(\bfQ_h,\bftau_h;w_h,\bfbeta_h)=0
\ \ \mbox{~for all~} (\bfQ_h,\bftau_h)\in
\mathbb{M}_h\times\Gamma_h,\label{eq:DiscreteForm1-a}
\\
\tilde{b}(\bfM_h,\bfgamma_h;v_h,\bfzeta_h)=-\int_{\Omega}gv_hd\bfx
\ \ \mbox{~for all~} (v_h,\bfzeta_h)\in W_h\times\Theta_h,
\label{eq:DiscreteForm1-b}
\end{eqnarray}
where
\begin{equation}
 \tilde{b}(\bfQ_{h},\bftau_h;v_{h},\bfzeta_{h})
 :=\int_{\Omega}\bfQ_{h}:\bfepsilon(\bfzeta_{h})d\bfx-\sum\limits_{K\in\mathcal{T}_h}\int_{K}\bftau_{h}\cdot \bfR_h(\grad v_{h}-\bfzeta_{h})d\bfx,
\end{equation}
 and the reduction operator
 \begin{equation}\label{Rh}
 \bfR_h: H^{1}(\Omega)^{2}\bigcap H_{0}(rot,\Omega)\rightarrow Z_{h}
 \end{equation}
 is defined by \cite{Duran-Hernandez-Nieto-Liberman-Rodriguez2004}
 \begin{equation}
 \int_{e} \bfR_h\bfpsi\cdot\bft_e=\int_{e} \bfpsi\cdot\bft_e, \forall
 \mbox{ edge } e \mbox{ of } \mathcal{T}_h,
 \end{equation}
 where
 \begin{equation}\label{eq: HrotSpace}
  H_0(\rot,\Omega):=\{\bfpsi\in L^2(\Omega)^2: \rot\bfpsi\in L^2(\Omega), \bfpsi\cdot\bft|_{\partial \Omega}=0)\},
\end{equation}
$Z_h$ is to be defined in \eqref{Zh-MiSP3} for MiSP3 and in \eqref{Zh-MiSP4} for MiSP4, respectively, and $\bft_e$ denotes a unit vector tangent to $e$.

 For both MiSP3 and MiSP4 elements, we define
 \begin{equation}
 \Gamma_h=\bfdiv_h\mathbb{M}_h, \quad\text{with~}
 (\bfQ_h,\bftau_h)=(\bfQ_h,\bfdiv_h\bfQ_h)
\end{equation}
for $\bfQ_h\in\mathbb{M}_h$. Here $\bfdiv_h$ denotes the divergence
operator piecewise defined with respect to $\mathcal{T}_h$.

From the definition of the space $\Gamma_h$, we
have an equivalent form of the discrete scheme
(\ref{eq:DiscreteForm1-a})-(\ref{eq:DiscreteForm1-b}): Find
$(\bfM_h,w_h,\bfbeta_h)\in\mathbb{M}_h\times W_h\times \Theta_h$
such that
\begin{eqnarray}
\qquad
a(\bfM_h,\bfdiv_h\bfM_h;\bfQ_h,\bfdiv_h\bfQ_h)+\tilde{b}(\bfQ_h,\bfdiv_h\bfQ_h;w_h,\bfbeta_h)=0
\ \ \mbox{~for all~} \bfQ_h\in\mathbb{M}_h, \label{eq:DiscreteForm2-a}
\\
\tilde{b}(\bfM_h,\bfdiv_h\bfM_h;v_h,\bfzeta_h)=-\int_{\Omega}gv_hd\bfx\
\ \mbox{~for all~} (v_h,\bfzeta_h)\in W_h\times\Theta_h.
\label{eq:DiscreteForm2-b}
\end{eqnarray}

\subsection {Finite Dimensional Subspaces for MiSP3}
Let $\mathcal{T}_h$ be a conventional triangular mesh  of ${\Omega}$.
For element MiSP3, the continuous piecewise linear interpolation is used for the transverse displacement and rotation
approximation, i.e. the transverse displacement space $W_h$ and
rotation space $\Theta_h$ are chosen as
 \begin{equation}
  W_{h}:=\{v_{h}\in H_{0}^{1}(\Omega)\bigcap C(\bar{\Omega}): v_{h}|_{K}\in P_1(K) \mbox{ for all } K \in
  \mathcal{T}_{h}\},
 \end{equation}
  \begin{equation}
 \Theta_{h}:=\{\bfzeta_{h}\in (H_{0}^{1}(\Omega)\bigcap C(\bar{\Omega}))^{2}: \bfzeta_{h}|_{K}\in P_1(K)^{2} \mbox{ for all } K \in
 \mathcal{T}_{h}\}.
 \end{equation}
 Here $P_1(K)$ denotes the set of linear polynomials on $K$.

 For the approximation of bending moment tensor, we define
 \begin{equation}
 \mathbb{M}_{h}:=\{\bfQ_{h}\in L^{2}(\Omega)_{sym}^{2\times 2}: (\bfQ_{h}|_{K})_{i,j}\in P_1(K) \mbox{ for all } K \in
 \mathcal{T}_{h}, i,j=1,2\}.
 \end{equation}

We take the space $Z_h$ in \eqref{Rh} as
\begin{equation}\label{Zh-MiSP3}
Z_{h}:=\left\{\bfpsi_{h}\in H_{0}(\rot,\Omega):\bfpsi_{h}|_{K}=span\left\{
 \left(\begin{array}{c}
  1\\
  0
   \end{array}
   \right),
   \left(\begin{array}{c}
  0\\
  1
   \end{array}
   \right),
   \left(\begin{array}{c}
  y\\
  -x
   \end{array}
   \right)
   \right\},
  \mbox{ for all } K \in
 \mathcal{T}_{h}
  \right\}.
\end{equation}

We also need the space
 \begin{equation}
 P_{h}:=\{q_{h}\in L_0^{2}(\Omega): q_h|_{K}\in P_1(K) \mbox{ for all } K \in
 \mathcal{T}_{h}, q_h \mbox{ is continuous at midpoints of element edges}\}.
 \end{equation}

\subsection {Finite Dimensional Subspaces for MiSP4}
Let $\mathcal{T}_h$ be a conventional quadrilateral mesh  of ${\Omega}$.
Let $Z_{i}(x_{i},y_{i})$, $1\leq i\leq 4$ be
 the four vertices of $K$, and  $T_{i}$ be the sub-triangle
 of $K$ with vertices $Z_{i-1}$, $Z_{i}$ and $Z_{i+1}$
 (the index on $Z_{i}$ is modulo 4). Define
\begin{equation*}
 \rho_{K}=\min\limits_{1\leq i\leq4}\{\mathrm{diameter\ of\ circle\ inscribed\
 in}\ T_{i}\}.
\end{equation*}
Throughout the paper, we assume that the partition $\mathcal{T}_{h}$ satisfies the
following `shape-regularity' hypothesis: There exists a constant $\varrho>2$ independent
of $h$ such that for all $K\in \mathcal{T}_{h},$
\begin{equation}\label{partition condition}
 h_{K}\leq \varrho \rho_{K}.
\end{equation}

 Let $\hat{K}=[-1,1]\times[-1,1]$ be the reference square with
 vertices $\hat{Z}_{i}$, $1\leq i\leq 4$. For a quadrilateral $K\in \mathcal{T}_{h}$, there exists a unique
 invertible mapping $F_{K}$ that maps $\hat{K}$ onto
 $K$ with $F_{K}(\xi,\eta)\in Q_{1}^{2}(\xi,\eta)$ and $F_{K}(\hat{Z}_{i})=Z_{i}$, $1\leq i\leq 4$ (Figure \ref{fig:transfer}). Here $\xi, \eta\in [-1,1]$ are the local isoparametric coordinates.

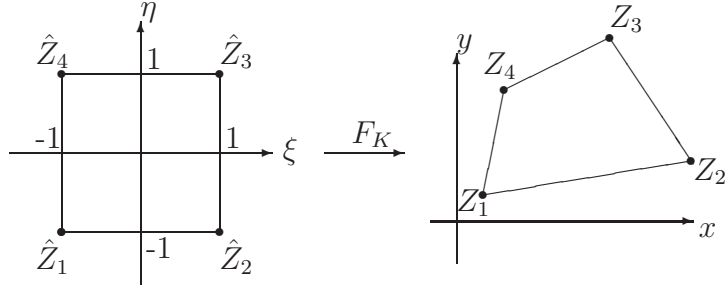
\begin{figure}[h]
\begin{center}
\setlength{\unitlength}{0.7cm}
\begin{picture}(10,6)
\put(0,1.5){\line(1,0){3}}        \put(3,1.5){\line(0,1){3}} \put(0,1.5){\line(0,1){3}} \put(0,4.5){\line(1,0){3}} \put(0,1.5){\circle*{0.15}}
\put(0,4.5){\circle*{0.15}} \put(3,4.5){\circle*{0.15}}       \put(0,1.5){\circle*{0.15}} \put(3,1.5){\circle*{0.15}}
\put(-0.5,0.8){\bf{$\hat{Z}_{1}$}} \put(3,0.8){\bf{$\hat{Z}_{2}$}}    \put(3,4.7){\bf{$\hat{Z}_{3}$}} \put(-0.5,4.7){\bf{$\hat{Z}_{4}$}}
\put(-1,3){\vector(1,0){5}} \put(1.5,0.5){\vector(0,1){5}} \put(4.2,2.9){$\xi$} \put(1.5,5.6){$\eta$}

\put(1.6,1.0){-1}                 \put(1.6,4.6){1} \put(-0.5,3.1){-1} \put(3.1,3.1){1}

\put(5,3){\vector(1,0){1.5}}      \put(5.5,3.2){$F_K$}

\put(8,2.2){\line(6,1){4}}        \put(8,2.2){\line(1,5){0.4}} \put(8.4,4.2){\line(2,1){2}} \put(10.4,5.2){\line(2,-3){1.6}}
\put(8,2.2){\circle*{0.15}}      \put(8.4,4.2){\circle*{0.15}} \put(10.4,5.2){\circle*{0.15}}    \put(11.95,2.85){\circle*{0.15}}
\put(7.5,1.9){\bf{$Z_{1}$}} \put(12,2.5){\bf{$Z_{2}$}} \put(10.4,5.4){\bf{$Z_{3}$}}       \put(8.0,4.5){\bf{$Z_{4}$}}
\put(7,1.7){\vector(1,0){5}}      \put(7.5,0.9){\vector(0,1){4}} \put(12.1,1.4){$x$} \put(7.5,5.0){$y$}

\end{picture}
\end{center}
\caption{The mapping $F_{K}$\label{fig:transfer}}
\end{figure}

 This  isoparametric bilinear mapping $(x,y)=F_{K}(\xi,\eta)$  is given by
\begin{equation}\label{relation1}
 x=\sum_{i=1}^{4}x_{i}N_{i}(\xi,\eta),\,\,\,
 y=\sum_{i=1}^{4}y_{i}N_{i}(\xi,\eta),
\end{equation}
 where
 $$N_{1}=\frac{1}{4}(1-\xi)(1-\eta),\,
   N_{2}=\frac{1}{4}(1+\xi)(1-\eta),\,
   N_{3}=\frac{1}{4}(1+\xi)(1+\eta),\,
   N_{4}=\frac{1}{4}(1-\xi)(1+\eta).$$
 We can rewrite (\ref{relation1}) as
\begin{equation}\label{relation2}
 x=a_{0}+a_{1}\xi+a_{2}\eta+a_{12}\xi\eta,\,\,\,
 y=b_{0}+b_{1}\xi+b_{2}\eta+b_{12}\xi\eta,
\end{equation}
with
\begin{equation*}
\left(\begin{array}{cc}
 a_{0} &b_{0}\\
 a_{1} &b_{1}\\
 a_{2} &b_{2}\\
 a_{12} &b_{12}
\end{array}\right)=\frac{1}{4}
\left(\begin{array}{cccc}
  1 &1  &1 &1\\
 -1 &1  &1 &-1\\
 -1 &-1 &1 &1\\
  1 &-1 &1 &-1\\
\end{array}\right)
\left(\begin{array}{cc}
 x_{1} &y_{1}\\
 x_{2} &y_{2}\\
 x_{3} &y_{3}\\
 x_{4} &y_{4}\\
\end{array}\right).
\end{equation*}
 The  Jacobi matrix and the Jacobian  of the transformation $F_{K}$ are respectively given by
 \begin{equation*}
 DF_{K}(\xi,\eta)=
 \left(\begin{array}{cc}
  \frac{\partial x}{\partial \xi} &\frac{\partial x}{\partial \eta}\\
  \frac{\partial y}{\partial \xi} &\frac{\partial y}{\partial \eta}\\
 \end{array}\right)=
 \left(\begin{array}{cc}
  a_{1}+a_{12}\eta &a_{2}+a_{12}\xi\\
  b_{1}+b_{12}\eta &b_{2}+b_{12}\xi\\
 \end{array}\right),
 \end{equation*}
 \begin{equation*}
  J_{K} =det(DF_{K})=J_{0}+J_{1}\xi+J_{2}\eta,
 \end{equation*}
 where
 \begin{equation*}
  J_{0}=a_{1}b_{2}-a_{2}b_{1},\,J_{1}=a_{1}b_{12}-a_{12}b_{1},\,J_{2}=a_{12}b_{2}-a_{2}b_{12}.
 \end{equation*}

 \begin{remark} Notice that when $K$ is a parallelogram,  we have $a_{12}=b_{12}=0$, and  $F_{K}$ is reduced to an affine mapping. Especially, when $K$ is a rectangle, we further have $a_{2}=b_{1}=0$.
 \end{remark}

For element MiSP4, the continuous isoparametric bilinear
interpolation is used for the transverse displacement and rotation
approximation, i.e. the transverse displacement space $W_h$ and
rotation space $\Theta_h$ are chosen as
 \begin{equation}
 W_{h}:=\{v_{h}\in H_{0}^{1}(\Omega)\bigcap C(\bar{\Omega}): v_{h}|_{K}\circ F_{K}\in Q_1(\hat{K}) \mbox{ for all } K \in
 \mathcal{T}_{h}\},
 \end{equation}
  \begin{equation}
 \Theta_{h}:=\{\bfzeta_{h}\in (H_{0}^{1}(\Omega)\bigcap C(\bar{\Omega}))^{2}: \bfzeta_{h}|_{K}\circ F_{K}\in Q_1(\hat{K})^{2} \mbox{ for all } K \in
 \mathcal{T}_{h}\}.
 \end{equation}
Here $Q_1(\hat{K})$ denotes the set of bilinear polynomials on $\hat{K}$.
 For the approximation of bending moment tensor, we define
 \begin{equation}
 \mathbb{M}_{h}:=\{\bfQ_{h}\in L^{2}(\Omega)_{sym}^{2\times 2}: (\bfQ_{h}|_{K}\circ F_{K})_{i,j}\in Q_1(\hat{K}) \mbox{ for all } K \in
 \mathcal{T}_{h}, i,j=1,2\}.
 \end{equation}

We take the space $Z_h$ in \eqref{Rh} as
\begin{equation}\label{Zh-MiSP4}
Z_{h}:=\{\bfpsi_{h}\in H_{0}(\rot,\Omega):\bfpsi_{h}|_{K}\circ
F_{K}=span\{DF_{K}^{-t}
 \left(\begin{array}{llll}
  1 &\eta &0 &0\\
  0 &0 &1 &\xi\\
   \end{array}
   \right)\},
  \mbox{ for all } K \in
 \mathcal{T}_{h}
  \}.
\end{equation}

\section{error analysis for MiSP3}
 In this section we will derive  error estimates for the MiSP3 element. The corresponding subspaces in this section
 are defined as in subsection 3.1. We first give the following properties for the operator $\bfR_h$.
 \begin{lemma}\label{lemma: error_Rh_MiSP3}
 The operator $\bfR_h: H^{1}(\Omega)^{2}\bigcap H_{0}(rot,\Omega)\rightarrow Z_{h}$ satisfies
 \begin{equation}\label{eq: Rh-Property-1}
 \bfR_h(\grad v_{h})=\grad v_{h}, \forall v_h\in W_h,
 \end{equation}
 \begin{equation}\label{eq: Rh-Property-2}
 \|\bfeta-\bfR_h\bfeta\|_{0}\lesssim h\|\bfeta\|_{1}, \forall \bfeta\in H^{1}(\Omega)^{2}\bigcap
 H_{0}(\rot,\Omega),
 \end{equation}
 \begin{equation}\label{eq: Rh-Property-3}
 \|\rot(\bfR_h\bfeta)\|_0\lesssim  \|\bfeta\|_1, \forall \bfeta\in H^{1}(\Omega)^{2}\bigcap
 H_{0}(\rot,\Omega),
 \end{equation}
 \begin{equation}\label{eq: Rh-Property-4}
 \rot(\bfR_h\bfeta_h)=\rot(\bfeta_h), \forall \bfeta_h\in \Theta_h.
 \end{equation}
\end{lemma}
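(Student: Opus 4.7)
The plan is to recognize that $Z_h$ is precisely the lowest-order N\'ed\'elec (edge) element space on triangles and $\bfR_h$ is its canonical interpolant determined by the three edge-tangential moments per triangle. Unisolvence of these moments on the three-dimensional local space $\mathrm{span}\{(1,0)^T,(0,1)^T,(y,-x)^T\}$ follows by noting that Stokes' theorem plus the vanishing of all three edge moments forces the coefficient of $(y,-x)^T$ to vanish (since $\rot(y,-x)^T=2$), after which linear independence of any two edge tangent vectors in a triangle kills the two constant coefficients. With this identification in place, the four properties reduce to standard facts about the N\'ed\'elec interpolant.

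For \eqref{eq: Rh-Property-1}, I would observe that $\grad v_h$ is piecewise constant, so it lies locally in $Z_h|_K$. Globally $\grad v_h\in H_0(\rot,\Omega)$, because $v_h\in H_0^1(\Omega)\cap C(\bar\Omega)$ makes the tangential derivative continuous across interior edges and zero on $\partial\Omega$. Hence $\grad v_h\in Z_h$, and since $\bfR_h$ acts as the identity on $Z_h$ by unisolvence, the claim follows. For \eqref{eq: Rh-Property-2}, a Bramble--Hilbert argument on the reference triangle combined with affine scaling produces the local estimate $\|\bfeta-\bfR_h\bfeta\|_{0,K}\lesssim h_K|\bfeta|_{1,K}$; summing over $K\in\mathcal{T}_h$ gives the global bound, and the edge integrals defining $\bfR_h\bfeta$ are meaningful since $\bfeta\in H^1(K)^2$ has traces in $L^2(\partial K)^2$.

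For \eqref{eq: Rh-Property-3} and \eqref{eq: Rh-Property-4} I would establish a commuting-diagram relation between $\bfR_h$ and the $L^2$-projection $P_h^0$ onto piecewise constants. Stokes' theorem on each $K$ together with the defining moments of $\bfR_h$ yields
$$\int_K\rot(\bfR_h\bfeta)\,d\bfx=\int_{\partial K}\bfR_h\bfeta\cdot\bft\,ds=\int_{\partial K}\bfeta\cdot\bft\,ds=\int_K\rot\bfeta\,d\bfx.$$
Since $\rot$ maps the three basis functions of $Z_h|_K$ to $0$, $0$, and $2$ respectively, $\rot(\bfR_h\bfeta)$ is piecewise constant, and the above identity forces $\rot(\bfR_h\bfeta)=P_h^0\rot\bfeta$ on each $K$. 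The $L^2$-boundedness of $P_h^0$ then gives \eqref{eq: Rh-Property-3}. For \eqref{eq: Rh-Property-4}, when $\bfeta_h\in\Theta_h$ the quantity $\rot\bfeta_h$ is already piecewise constant, so $P_h^0\rot\bfeta_h=\rot\bfeta_h$ and the identity follows.

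There is no deep obstacle here; the entire lemma is a careful verification of the familiar properties of the first-kind lowest-order N\'ed\'elec interpolant. The only items that demand care are the tangential conformity $\grad v_h\in H_0(\rot,\Omega)$, the unisolvence argument above, and the elementwise application of Stokes' theorem; once these are set up, the estimates are routine.
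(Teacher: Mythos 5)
Your proposal is correct, and for \eqref{eq: Rh-Property-1}, \eqref{eq: Rh-Property-2} and \eqref{eq: Rh-Property-4} it follows essentially the same path as the paper: gradients of $W_h$-functions are piecewise constant and tangentially continuous, hence lie in $Z_h$ where $\bfR_h$ acts as the identity; the $L^2$ error bound comes from scaling; and \eqref{eq: Rh-Property-4} is the same elementwise Stokes computation combined with piecewise constancy of the rotations. The genuine difference is in \eqref{eq: Rh-Property-3}. The paper compares $\bfR_h\bfeta$ with the Scott--Zhang interpolant $\Pi_h\bfeta$ and concludes via an inverse inequality together with \eqref{eq: Rh-Property-2} and the stability and approximation properties of $\Pi_h$; as written this invokes a global $h^{-1}$ inverse estimate paired with global $O(h)$ approximation bounds, an argument of quasi-uniform flavor (though it could be localized element by element). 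You instead prove the commuting-diagram identity $\rot(\bfR_h\bfeta)=P_h^0(\rot\bfeta)$, with $P_h^0$ the $L^2$-projection onto piecewise constants, which follows from exactly the Stokes identity the paper uses for \eqref{eq: Rh-Property-4} together with the fact that $\rot$ maps the local N\'ed\'elec space onto constants. This buys you a purely local argument, the sharper bound $\|\rot(\bfR_h\bfeta)\|_0\le\|\rot\bfeta\|_0\lesssim\|\bfeta\|_1$ with no inverse estimate, and \eqref{eq: Rh-Property-4} as an immediate corollary since $\rot\bfeta_h$ is already piecewise constant for $\bfeta_h\in\Theta_h$; the paper's route, by contrast, does not rely on identifying $Z_h$ with the N\'ed\'elec space and its canonical interpolant, but here that identification is exactly right, and your care about unisolvence and the well-definedness of the edge moments for $H^1$ fields covers the only delicate points.
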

\begin{proof}\
It is easy to verify $\grad W_h\subset Z_h$ and $\bfR_h\bfpsi_h=\bfpsi_h, \forall \bfpsi_h\in Z_h$. Then \eqref{eq: Rh-Property-1} holds.
The estimate \eqref{eq: Rh-Property-2} follows from a scaling argument and the definition of $\bfR_h$.

For $\bfeta\in H^{1}(\Omega)^{2}\bigcap
 H_{0}(\rot,\Omega)$, let $\Pi_h\bfeta$ be the Scott-Zhang interpolation \cite{scott1990finite} of $\bfeta$. Then we have
\begin{eqnarray*}
  \|\rot(\bfR_h\bfeta)\|_0
  &\le &\|\rot(\bfR_h\bfeta)-\rot(\Pi_h\bfeta)\|_0+\|\rot(\Pi_h\bfeta)\|_0 \\
  &\lesssim &h^{-1}\|\bfR_h\bfeta-\Pi_h\bfeta\|_0+\|\bfeta\|_1\\
  &\le &h^{-1}(\|\bfR_h\bfeta-\bfeta\|+\|\bfeta-\Pi_h\bfeta\|_0)+\|\bfeta\|_1\\
  &\lesssim &\|\bfeta\|_1.
\end{eqnarray*}
Here, the second inequality is based on an inverse inequality and the stability of Scott-Zhang interpolation. Hence \eqref{eq: Rh-Property-3} holds.

For any $K\in\mathcal{T}_h$, it holds
\begin{equation*}
\int_K \rot(\bfR_h\bfeta_h)d\bfx=\int_{\partial K} \bfR_h\bfeta_h\cdot \bft ds=\int_{\partial K} \bfeta_h\cdot \bft ds=\int_K \rot(\bfeta_h)d\bfx,
\end{equation*}
since $\rot(\bfeta_h)|_K$ and $\rot(\bfR_h\bfeta_h)|_K$ are constants, we have $\rot(\bfeta_h)|_K=\rot(\bfR_h\bfeta_h)|_K$, which yields \eqref{eq: Rh-Property-4}.
\end{proof}

 For the latter error analysis, we need the following discrete Helmholtz decomposition given in Theorem 4.1 of \cite{xiaoliang1994simple}.
\begin{lemma}\label{lemma: DiscreteDecomposition}
 For any $\bfQ_h\in \mathbb{M}_{h}$, there exist $s_h\in W_h$ and $q_h\in P_h$ such that
\begin{equation}\label{eq: DiscreteDecomposition}
\bfdiv_h \bfQ_h=\grad s_h + \curlh q_h.
\end{equation}
\end{lemma}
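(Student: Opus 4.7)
The plan is a dimension-counting argument showing that $\grad W_h \oplus \curlh P_h$ exhausts exactly the space in which $\bfdiv_h\bfQ_h$ lives. Set $\mathbf{V}_h := \{\bftau \in L^2(\Omega)^2 : \bftau|_K \in P_0(K)^2 \ \forall K \in \mathcal{T}_h\}$. Since $\bfQ_h$ is componentwise affine on each $K$, we have $\bfdiv_h\bfQ_h \in \mathbf{V}_h$, and a direct construction (pick $\bfQ_h|_K$ componentwise affine) shows that the map $\bfdiv_h : \mathbb{M}_h \to \mathbf{V}_h$ is surjective. Because $\grad s_h|_K$ is constant for $s_h \in W_h$ and $\curl q_h|_K$ is constant for $q_h \in P_h$, we also have $\grad W_h + \curlh P_h \subset \mathbf{V}_h$, so it suffices to prove equality of these two subspaces of $\mathbf{V}_h$.

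The heart of the proof is the direct-sum step. Suppose $\grad s_h = \curlh q_h$ with $s_h \in W_h$ and $q_h \in P_h$. Applying Green's identity elementwise, and using $\rot\grad s_h = 0$ on each $K$,
\begin{equation*}
\|\grad s_h\|_0^2 = \sum_{K \in \mathcal{T}_h} \int_K \curl q_h \cdot \grad s_h \, d\bfx = -\sum_{K \in \mathcal{T}_h} \int_{\partial K} q_h\, \grad s_h \cdot \bft\, ds.
\end{equation*}
On every interior edge $e$, the tangential trace $\grad s_h \cdot \bft$ is a constant $c_e$ that is single-valued (since $s_h$ is continuous), so the edge contribution collapses to $c_e |e|\,[q_h](m_e)$, where $m_e$ is the midpoint of $e$; this vanishes by the midpoint continuity built into $P_h$ together with the exactness of the midpoint rule for linear functions. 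On boundary edges, $\grad s_h \cdot \bft = 0$ because $s_h$ vanishes on $\partial \Omega$. Hence $\grad s_h = 0$, forcing $s_h = 0$, and then $\curlh q_h = 0$ makes $q_h$ piecewise constant; midpoint continuity together with the zero-mean condition of $P_h$ give $q_h = 0$.

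The proof is closed by a dimension count. Injectivity of $\grad : W_h \to \mathbf{V}_h$ and $\curlh : P_h \to \mathbf{V}_h$ (from the kernel analysis above) yields
\begin{equation*}
\dim(\grad W_h + \curlh P_h) = \dim W_h + \dim P_h = N_V^{\rm int} + (N_E - 1),
\end{equation*}
where $N_V^{\rm int}, N_E, N_T$ denote the numbers of interior vertices, edges and triangles of $\mathcal{T}_h$. Combining Euler's relation $N_V - N_E + N_T = 1$ for the simply connected polygon $\Omega$ with the boundary identity $3N_T = 2N_E - N_V^{\partial}$, one computes $N_V^{\rm int} + N_E - 1 = 2N_T = \dim \mathbf{V}_h$. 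Consequently $\grad W_h \oplus \curlh P_h = \mathbf{V}_h$, and the decomposition \eqref{eq: DiscreteDecomposition} exists for every $\bfQ_h \in \mathbb{M}_h$.

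The main obstacle I anticipate is the direct-sum step: the argument must use exactly the midpoint-continuity condition that singles out $P_h$ among piecewise linear spaces, matched against the single-valued tangential trace of $\grad s_h$ coming from the conformity of $W_h$. Once the intersection is known to be trivial, the dimension identity reduces to a short combinatorial manipulation of Euler's formula, and surjectivity follows for free.
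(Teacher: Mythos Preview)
The paper does not prove this lemma at all: it simply quotes it as ``the discrete Helmholtz decomposition given in Theorem 4.1 of \cite{xiaoliang1994simple}.'' Your dimension-counting argument is a correct, self-contained proof. The direct-sum step is exactly where the midpoint continuity in $P_h$ and the conformity of $W_h$ are used, and your handling of the edge integrals is right (there is a harmless sign slip in the integration-by-parts line: with the paper's convention one gets $+\sum_K\int_{\partial K}q_h\,\grad s_h\cdot\bft\,ds$, but since you show the sum vanishes this changes nothing). The Euler computation $N_V^{\rm int}+N_E-1=2N_T$ is also correct for a simply connected polygon, which is guaranteed here since $\Omega$ is assumed convex.

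What your approach buys over merely citing \cite{xiaoliang1994simple} is transparency: one sees precisely that the decomposition rests on (i) $\bfdiv_h\mathbb{M}_h$ landing in the piecewise-constant vectors, (ii) the Crouzeix--Raviart nature of $P_h$ (midpoint continuity) matching the single-valued tangential trace of $\grad s_h$, and (iii) a purely combinatorial dimension identity. This makes clear that the lemma would fail if, say, $P_h$ were replaced by fully discontinuous piecewise linears or if $\Omega$ were multiply connected without adjusting the spaces.
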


In the latter analysis, we will use the discrete Helmholtz decomposition \eqref{eq: DiscreteDecomposition} for  $\bfQ\in\mathbb{M}_{h}$ and the Helmholtz decomposition \eqref{eq:Helmholtz} with $\bftau=\bfdiv \bfQ$ for
$\bfQ\in (H^1(\Omega))^{2\times 2}_{sym}$ respectively. For convenience, we denote
the decomposition as $\bfdiv_h \bfQ=\grad s + \curlh q$ in both cases.

We introduce two mesh-dependent norms as follows:
for any $\bfQ\in (H^1(\Omega))^{2\times 2}_{sym}\bigcup\mathbb{M}_h$, $v\in (H^2(\Omega)\bigcap H_0^1(\Omega))\bigcup W_h$, $\bfzeta\in H_0^1(\Omega)^2\bigcup \Theta_h$, \begin{equation}
|\|\bfQ|\|_{h,1}:=\|\bfQ\|_0+(h+t)\|\curlh q\|_0+\|q\|_0+\|\grad s\|_0,
\end{equation}
\begin{equation}
|\|(v,\bfzeta)|\|_{h,2}:=\|\bfepsilon(\bfzeta)\|_0+\|\bfR_h(\grad v)\|_0.
\end{equation}

We are now ready to give the error analysis. Basing on the standard error theory for mixed methods,
we first show   continuity results in Lemmas \ref{lemma: Continuity1-MiSP3}-\ref{lemma: Continuity2-MiSP3},
then derive   coercivity results in Lemmas \ref{lemma: Coercivity1-MiSP3}-\ref{lemma: Coercivity2-MiSP3},
we finally give the desired estimates in Theorem \ref{theorem: error-MiSP3}.
\begin{lemma}\label{lemma: Continuity1-MiSP3}
 It holds
 \begin{equation}
 a(\bfM,\bfdiv_h\bfM;\bfQ,\bfdiv_h\bfQ)\lesssim |\|\bfM|\|_{h,1}|\|\bfQ|\|_{h,1}\quad \mbox{for all~} \bfM, \bfQ\in \mathbb{M}\bigcup\mathbb{M}_h.
 \end{equation}
\end{lemma}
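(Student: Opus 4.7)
The plan is to split the bilinear form $a(\cdot,\cdot;\cdot,\cdot)$ into its two constituent pieces and bound each in turn by the mesh-dependent norm. Writing
\[
a(\bfM,\bfdiv_h\bfM;\bfQ,\bfdiv_h\bfQ)=\int_{\Omega}\bfM:\matrixD^{-1}\bfQ\,d\bfx+\frac{t^{2}}{\lambda}\int_{\Omega}\bfdiv_h\bfM\cdot\bfdiv_h\bfQ\,d\bfx,
\]
the first summand is immediate: the elastic compliance tensor $\matrixD^{-1}$ is bounded, so Cauchy--Schwarz yields $\int_{\Omega}\bfM:\matrixD^{-1}\bfQ\,d\bfx\lesssim\|\bfM\|_{0}\|\bfQ\|_{0}$, and both factors are controlled by the leading $\|\cdot\|_{0}$ component of $|\|\cdot|\|_{h,1}$.

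For the shear part I would invoke the (continuous or discrete) Helmholtz decompositions $\bfdiv_h\bfM=\grad s_{M}+\curlh q_{M}$ and $\bfdiv_h\bfQ=\grad s_{Q}+\curlh q_{Q}$ guaranteed by Lemma \ref{lemma: DiscreteDecomposition} together with \eqref{eq:Helmholtz}. A second application of Cauchy--Schwarz and the triangle inequality gives
\[
t^{2}\bigl|\!\int_{\Omega}\bfdiv_h\bfM\cdot\bfdiv_h\bfQ\,d\bfx\bigr|\lesssim\bigl(t\|\grad s_{M}\|_{0}+t\|\curlh q_{M}\|_{0}\bigr)\bigl(t\|\grad s_{Q}\|_{0}+t\|\curlh q_{Q}\|_{0}\bigr).
\]
Since $t\le 1$ we have $t\|\grad s\|_{0}\le\|\grad s\|_{0}$, and since $t\le h+t$ we have $t\|\curlh q\|_{0}\le(h+t)\|\curlh q\|_{0}$. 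Each of these four quantities is therefore dominated by the corresponding terms of $|\|\cdot|\|_{h,1}$, and the desired continuity estimate follows.

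The only subtlety worth flagging is that the definition of $|\|\cdot|\|_{h,1}$ carries the weight $(h+t)$ in front of $\|\curlh q\|_{0}$ rather than just $t$; the cleanest way to absorb this mismatch is the elementary comparison $t\le h+t$ noted above, which is precisely why the $(h+t)$ weighting is built into the norm. No orthogonality between $\grad s$ and $\curlh q$ in the discrete decomposition is needed for this lemma, which would otherwise have been the main technical worry given that the pressure-like space $P_h$ is only midpoint-continuous.
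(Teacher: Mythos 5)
Your proof is correct and matches the paper's intent: the paper simply declares the estimate trivial, and your elaboration (Cauchy--Schwarz with the bounded compliance tensor $\matrixD^{-1}$, the triangle inequality on the decomposition $\bfdiv_h\bfQ=\grad s+\curlh q$, and the elementary comparisons $t\le 1$ and $t\le h+t$) is exactly the routine calculation behind that claim. No gap here.
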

\begin{proof}
 It is trivial.
\end{proof}

\begin{lemma}\label{lemma: Continuity2-MiSP3}
 For any $\bfQ\in (H^1(\Omega))^{2\times 2}_{sym}\bigcup\mathbb{M}_h$, $v\in (H^2(\Omega)\bigcap H_0^1(\Omega))\bigcup W_h$, $\bfzeta\in H_0^1(\Omega)^2\bigcup \Theta_h$, it holds
 \begin{equation}
 \tilde{b}(\bfQ,\bfdiv_h\bfQ; v,\bfzeta)\lesssim |\|\bfQ|\|_{h,1}|\|(v,\bfzeta)|\|_{h,2}.
 \end{equation}
\end{lemma}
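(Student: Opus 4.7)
My plan is to decompose the bilinear form as
\[
\tilde b(\bfQ,\bfdiv_h\bfQ;v,\bfzeta) = \int_\Omega \bfQ : \bfepsilon(\bfzeta)\, d\bfx \;-\; \sum_{K\in\mathcal{T}_h}\int_K \bfdiv_h\bfQ \cdot \bfR_h(\grad v-\bfzeta)\,d\bfx,
\]
control the first summand immediately by $\|\bfQ\|_0\|\bfepsilon(\bfzeta)\|_0$ via Cauchy--Schwarz, and attack the second by writing $\bfdiv_h\bfQ = \grad s + \curlh q$, using Lemma \ref{lemma: DiscreteDecomposition} when $\bfQ\in\mathbb{M}_h$ and \eqref{eq:Helmholtz} applied to $\bfdiv\bfQ$ when $\bfQ\in(H^1(\Omega))^{2\times 2}_{sym}$. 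I would then split $\bfR_h(\grad v-\bfzeta) = \bfR_h\grad v-\bfR_h\bfzeta$ and treat the four resulting pairings separately.

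For the contributions from $\grad s$, the pairing with $\bfR_h\grad v$ is already of the right shape, bounded by $\|\grad s\|_0\|\bfR_h\grad v\|_0$. For the pairing with $\bfR_h\bfzeta$, I would add and subtract $\bfzeta$. The piece $\int_\Omega \grad s\cdot\bfzeta = -\int_\Omega s\,\Div\bfzeta$ (integration by parts using $s|_{\partial\Omega}=0$) is bounded by Poincar\'e ($\|s\|_0\lesssim\|\grad s\|_0$) and Korn ($\|\Div\bfzeta\|_0 \le \|\nabla\bfzeta\|_0 \lesssim \|\bfepsilon(\bfzeta)\|_0$); the remainder $\int_\Omega \grad s\cdot(\bfR_h\bfzeta-\bfzeta)$ is controlled by the approximation estimate \eqref{eq: Rh-Property-2} together with $h\lesssim 1$.

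The heart of the proof is the $\curlh q$ contribution, for which I would perform elementwise integration by parts:
\[
\sum_K\int_K \curl q\cdot \bfR_h\bfpsi\,d\bfx = -\sum_K\int_K q\,\rot(\bfR_h\bfpsi)\,d\bfx + \sum_e \int_e [q]\,\bfR_h\bfpsi\cdot\bft\,ds,
\]
where $\bfpsi = \grad v-\bfzeta$. The crucial observation is that, by the very choice of basis in \eqref{Zh-MiSP3}, the tangential trace of any element of $Z_h$ is constant along each straight edge -- a direct calculation since $(y,-x)^T\cdot\bft = y\,t_1 - x\,t_2$ is constant along an edge with tangent $(t_1,t_2)$. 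Combined with the midpoint continuity of $q\in P_h$ (so $[q]$ has zero mean on each interior edge) or the global continuity of $q\in\hat H^1$, and with $\bfR_h\bfpsi\cdot\bft=0$ on $\partial\Omega$ (since $Z_h\subset H_0(\rot,\Omega)$), the entire edge sum vanishes.

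For the remaining volume term $\sum_K\int_K q\,\rot(\bfR_h\bfpsi)\,d\bfx$, I would invoke \eqref{eq: Rh-Property-1} to conclude $\rot(\bfR_h\grad v_h)=0$ when $v\in W_h$, \eqref{eq: Rh-Property-4} to identify $\rot(\bfR_h\bfzeta_h)=\rot(\bfzeta_h)$ when $\bfzeta\in\Theta_h$, and \eqref{eq: Rh-Property-3} to control the continuous cases through the $H^1$-bound $\|\rot(\bfR_h\cdot)\|_0\lesssim\|\cdot\|_1$. A final application of Korn then produces the bound $\|q\|_0\|\bfepsilon(\bfzeta)\|_0$. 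I expect the main obstacle to lie in the edge-cancellation step above, since it must simultaneously exploit the special structure of $Z_h$ (tangential constancy of its basis) and the carefully tuned definition of $P_h$ (midpoint continuity) -- this is exactly where the MiSP3 geometry is essential and where a naive Cauchy--Schwarz on $\curlh q$ would lose the uniformity in $h$ and $t$.
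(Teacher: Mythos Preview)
Your strategy is essentially that of the paper: Helmholtz-decompose $\bfdiv_h\bfQ$, integrate the $\curlh q$ piece by parts elementwise, and kill the edge terms using the constancy of $\bfR_h\bfpsi\cdot\bft$ on edges together with the midpoint continuity built into $P_h$. The treatment of the $\grad s$ piece is slightly more elaborate than needed (the paper simply bounds $(\grad s,\bfR_h(\grad v-\bfzeta))$ by Cauchy--Schwarz and then controls $\|\bfR_h\bfzeta\|_0\lesssim\|\bfepsilon(\bfzeta)\|_0$), but your route through $-(s,\Div\bfzeta)$ also works.

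There is, however, a genuine gap in the volume-term step when $v\in H^2(\Omega)\cap H_0^1(\Omega)$. You invoke \eqref{eq: Rh-Property-1} only for $v\in W_h$, and for continuous $v$ you fall back on \eqref{eq: Rh-Property-3}, which yields $\|\rot(\bfR_h\grad v)\|_0\lesssim\|\grad v\|_1$. That produces a contribution $\|q\|_0\,\|v\|_2$, which is \emph{not} bounded by $|\|(v,\bfzeta)|\|_{h,2}=\|\bfepsilon(\bfzeta)\|_0+\|\bfR_h\grad v\|_0$; the target norm contains no second-derivative control on $v$. The claimed final bound $\|q\|_0\|\bfepsilon(\bfzeta)\|_0$ therefore does not follow from your argument.

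The fix is to notice that $\rot(\bfR_h\grad v)=0$ elementwise for \emph{every} admissible $v$, not only discrete ones. Indeed, $\rot(\bfR_h\bfeta)|_K$ is constant (from the structure of $Z_h$), and by Stokes plus the defining property of $\bfR_h$,
\[
\int_K\rot(\bfR_h\bfeta)\,d\bfx=\int_{\partial K}\bfR_h\bfeta\cdot\bft\,ds=\int_{\partial K}\bfeta\cdot\bft\,ds=\int_K\rot\bfeta\,d\bfx;
\]
taking $\bfeta=\grad v$ gives $\int_K\rot(\bfR_h\grad v)=0$ and hence $\rot(\bfR_h\grad v)|_K=0$. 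This is exactly what the paper does, and it is the same computation underlying \eqref{eq: Rh-Property-4}. Once this is in place, the $\grad v$ contribution to the volume term vanishes outright and your remaining bounds go through.
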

\begin{proof}
Given $\bfQ\in (H^1(\Omega))^{2\times 2}_{sym}\bigcup\mathbb{M}_h$, by \eqref{eq:Helmholtz} and \eqref{eq: DiscreteDecomposition},
we have $$\bfdiv_h\bfQ=\grad s+\curlh q.$$
We first show
\begin{equation}\label{eq: OrthogonalProperty}
(\curlh q,\bfR_h(\grad v))=0
\end{equation}
 holds.  By integration by parts, we have
 \begin{eqnarray}
  (\curlh q,\bfR_h(\grad v))&=&\sum_{K\in\mathcal{T}_h}\left(-(q,\rot(\bfR_h(\grad v)))_K+\int_{\partial K} q \bfR_h(\grad v)\cdot\bft ds\right)\nonumber\\
  &=&-\sum_{K\in\mathcal{T}_h}(q,\rot(\bfR_h(\grad v)))_K+\sum_{e\in \varepsilon_h}\int_{e} [q] \bfR_h(\grad v)\cdot\bft ds,\label{eq: IntegrationByParts}
 \end{eqnarray}
 here, $\varepsilon_h$ denotes the set of interior edges for $\mathcal{T}_h$, and $[q]|_e$ means the jump across the edge $e$.
 We only need to verify the two terms of \eqref{eq: IntegrationByParts} both vanish.

 Since $\rot(\bfR_h(\grad v))$ is a piecewise constant, and, for any $K\in\mathcal{T}_h$,
    \begin{equation*}
      \int_K \rot(\bfR_h(\grad v))d\bfx=\int_{\partial K} \bfR_h(\grad v)\cdot \bft ds=\int_{\partial K} (\grad v)\cdot \bft ds=0,
    \end{equation*}
    we have $\rot(\bfR_h(\grad v))|_K=0$.
    So, the first term of \eqref{eq: IntegrationByParts} equals zero.

 For the second term, if $\bfQ\in (H^1(\Omega))^{2\times 2}_{sym}$, it equals zero by continuity.
 Otherwise if $\bfQ\in \mathbb{M}_h$, since $q\in P_h$, $[q]$ vanishes at the midpoint of $e$ and $[q] \bfR_h(\grad v)\cdot\bft|_{e}$ is linear,
 then by one-point Gauss integration we know the second term equals zero.

%
%
%
%

Now with \eqref{eq: OrthogonalProperty}, we can deduce the desired result:
\begin{eqnarray*}
\tilde{b}(\bfQ,\bfdiv_h\bfQ;v,\bfzeta)
&=&(\bfQ,\bfepsilon(\bfzeta))-(\bfdiv_h\bfQ,\bfR_h(\grad v-\bfzeta))\\
&=&(\bfQ,\bfepsilon(\bfzeta))-(\grad s+\curlh q,\bfR_h(\grad v-\bfzeta))\\
&=&(\bfQ,\bfepsilon(\bfzeta))-(\grad s,\bfR_h(\grad v-\bfzeta))-(q,\rot(\bfR_h\bfzeta))\\
&=&\|\bfQ\|_0\|\bfepsilon(\bfzeta)\|_0+\|\grad s\|_0\|\bfR_h(\grad v-\bfzeta)\|_0+\|q\|_0\|\rot(\bfR_h\bfzeta)\|_0\\
&\lesssim&\left(\|\bfQ\|_0+\|\grad s\|_0+\|q\|_0\right)\left(\|\bfepsilon(\bfzeta)\|_0+\|\bfR_h(\grad v-\bfzeta)\|_0+\|\bfzeta\|_1\right)  \\
&\lesssim&|\|\bfQ|\|_{h,1}|\|(v,\bfzeta)|\|_{h,2}.
\end{eqnarray*}
\end{proof}

\begin{lemma}\label{lemma: Coercivity1-MiSP3}
 It holds
 \begin{equation}\label{eq: coercivity}
 a(\bfQ_h,\bfdiv_h\bfQ_h;\bfQ_h,\bfdiv_h\bfQ_h)\gtrsim |\|\bfQ_h|\|_{h,1}^2, \mbox{ for all } \bfQ_h\in Ker B,
 \end{equation}
 here,
 \begin{equation}
 Ker B=\left\{\bfQ_h\in\mathbb{M}_h: \tilde{b}(\bfQ_h,\bfdiv_h\bfQ_h;v_h,\bfzeta_h)=0, \mbox{ for all } (v_h,\bfzeta_h)\in W_h\times\Theta_h\right\}.
 \end{equation}
\end{lemma}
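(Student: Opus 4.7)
Since
\begin{equation*}
a(\bfQ_h,\bfdiv_h\bfQ_h;\bfQ_h,\bfdiv_h\bfQ_h)\thickapprox\|\bfQ_h\|_0^2+t^2\|\bfdiv_h\bfQ_h\|_0^2,
\end{equation*}
the task is to dominate the remaining pieces of $|\|\bfQ_h|\|_{h,1}^2$---namely $\|\grad s\|_0^2$, $\|q\|_0^2$ and $(h+t)^2\|\curlh q\|_0^2$---by this form on $Ker\, B$, where $\bfdiv_h\bfQ_h=\grad s+\curlh q$ is the discrete Helmholtz decomposition of Lemma \ref{lemma: DiscreteDecomposition}.

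First I would kill the gradient part and control $(h+t)\|\curlh q\|_0$. Testing the kernel identity against $(v_h,\mathbf{0})$ and using $\bfR_h(\grad v_h)=\grad v_h$ (Lemma \ref{lemma: error_Rh_MiSP3}(i)), together with the orthogonality $(\curlh q,\grad v_h)=0$---which follows from the integration-by-parts argument of Lemma \ref{lemma: Continuity2-MiSP3}, because $\grad v_h\cdot\bft$ is edge-wise constant and $\int_e[q]\,ds=0$---yields $(\grad s,\grad v_h)=0$ for every $v_h\in W_h$. Taking $v_h=s\in W_h$ gives $\grad s=\mathbf{0}$, hence $\bfdiv_h\bfQ_h=\curlh q$ and $t\|\curlh q\|_0\lesssim a^{1/2}$ directly. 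The companion bound $h\|\curlh q\|_0=h\|\bfdiv_h\bfQ_h\|_0\lesssim\|\bfQ_h\|_0\lesssim a^{1/2}$ is a standard inverse inequality for the piecewise-linear tensor $\bfQ_h$, so $(h+t)\|\curlh q\|_0\lesssim a^{1/2}$.

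The main step is bounding $\|q\|_0$. Using the continuous inf-sup of $\rot:H_0^1(\Omega)^2\to L_0^2(\Omega)$, pick $\bfpsi\in H_0^1(\Omega)^2$ with $\rot\bfpsi=-q$ and $\|\bfpsi\|_1\lesssim\|q\|_0$, and let $\bfzeta_h\in\Theta_h$ be its Scott--Zhang interpolant, satisfying $\|\bfpsi-\bfzeta_h\|_0\lesssim h\|\bfpsi\|_1$ and $\|\bfzeta_h\|_1\lesssim\|q\|_0$. Testing the kernel identity with $(\mathbf{0},\bfzeta_h)$ gives $(\bfQ_h,\bfepsilon(\bfzeta_h))=-(\curlh q,\bfR_h\bfzeta_h)$. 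The crucial structural fact for MiSP3 is that $\bfR_h\bfzeta_h\cdot\bft$ is constant on every edge: the three basis fields $(1,0)$, $(0,1)$ and $(y,-x)$ spanning $Z_h|_K$ all produce an edge-wise constant tangential trace, since a direct calculation gives $\partial_\bft\bigl((y,-x)\cdot\bft\bigr)=0$. Combined with $\int_e[q]\,ds=0$ and $\rot(\bfR_h\bfzeta_h)=\rot\bfzeta_h$ (Lemma \ref{lemma: error_Rh_MiSP3}(iv)), this eliminates the edge-jump contributions in the integration by parts and produces
\begin{equation*}
(\bfQ_h,\bfepsilon(\bfzeta_h))=(q,\rot\bfzeta_h)=-\|q\|_0^2+(q,\rot(\bfzeta_h-\bfpsi)).
\end{equation*}
A second integration by parts, combined with the Scott--Zhang error estimates and the bound $\sum_e\|[q]\|_{0,e}^2\lesssim h^2\|\curlh q\|_0^2$ (which follows from $[q](m_e)=0$), gives $|(q,\rot(\bfzeta_h-\bfpsi))|\lesssim h\|\curlh q\|_0\|q\|_0$. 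Collecting, $\|q\|_0^2\lesssim(\|\bfQ_h\|_0+h\|\curlh q\|_0)\|q\|_0\lesssim\|\bfQ_h\|_0\|q\|_0$ by the previous step, so $\|q\|_0\lesssim\|\bfQ_h\|_0\lesssim a^{1/2}$ after dividing.

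The hardest ingredient is the structural observation that for the MiSP3 space $Z_h$ the tangential trace of $\bfR_h\bfzeta_h$ is edge-wise constant; without this cancellation, the edge-jump remainders from integrating $(\curlh q,\bfR_h\bfzeta_h)$ by parts would combine with the inverse inequality $h\|\curlh q\|_0\lesssim\|\bfQ_h\|_0$ in a way that makes the final closure circular.
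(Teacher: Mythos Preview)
Your plan is correct and follows essentially the same architecture as the paper: kill $\grad s$ by testing with $(s,\mathbf{0})$, control $(h+t)\|\curlh q\|_0$ by the inverse inequality, and bound $\|q\|_0$ via the continuous $\rot$ inf--sup combined with a Scott--Zhang interpolant of the lifting. The structural fact you single out---that $\bfR_h\bfzeta_h\cdot\bft$ is edge-wise constant for the MiSP3 space $Z_h$---is precisely what the paper uses (implicitly) to pass from $(\curlh q,\bfR_h\bfzeta_h)$ to $-(q,\rot\bfzeta_h)$ without boundary contributions, and your verification via $(y,-x)\cdot\bft_e$ is the right one.

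The only real difference lies in how the remainder is handled. You integrate $(q,\rot(\bfzeta_h-\bfpsi))$ by parts once more and estimate the edge-jump terms on $[q]$ directly, whereas the paper inserts $\bfR_h\bfzeta$ as a second intermediate and splits $(q,\rot\bfzeta)$ into three pieces $(q,\rot(\Pi_h\bfzeta))+(q,\rot(\bfR_h\bfzeta-\Pi_h\bfzeta))+(q,\rot(\bfzeta-\bfR_h\bfzeta))$, thereby avoiding any explicit jump estimate; the third piece is treated by subtracting the elementwise mean of $q$. Both routes yield $\|q\|_0\lesssim\|\bfQ_h\|_0+h\|\curlh q\|_0$. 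One minor slip in your write-up: the bound $\sum_e\|[q]\|_{0,e}^2\lesssim h^2\|\curlh q\|_0^2$ should read $h\|\curlh q\|_0^2$ (since $[q]|_e$ is linear with slope $\lesssim h^{-1}\|\curlh q\|_{0,K_\pm}$ and $|e|\approx h$), but your final estimate $|(q,\rot(\bfzeta_h-\bfpsi))|\lesssim h\|\curlh q\|_0\|q\|_0$ survives because the Scott--Zhang trace bound $\sum_e\|\bfzeta_h-\bfpsi\|_{0,e}^2\lesssim h\|\bfpsi\|_1^2$ supplies the compensating $h^{1/2}$.
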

\begin{proof}
 We want to check the property of $\bfQ_h\in Ker B$. Based on Lemma \ref{lemma: DiscreteDecomposition},
 there exist $s_h\in W_h$ and $q_h\in P_h$ such that
 \begin{equation}\label{eq: DiscreteDecomposition2}
\bfdiv_h \bfQ_h=\grad s_h + \curlh q_h.
\end{equation}
It is easy to have
 \begin{equation*}
 a(\bfQ_h,\bfdiv_h\bfQ_h;\bfQ_h,\bfdiv_h\bfQ_h)\gtrsim \|\bfQ_h\|_0^2+t^2\|\bfdiv_h\bfQ_h\|_0^2.
 \end{equation*}
 By the inverse inequality $ \|\bfQ_h\|_0^2\gtrsim h^2\|\bfdiv_h\bfQ_h\|_0^2$ and the relation $\|\bfdiv_h\bfQ_h\|_0^2=\|\grad s_h\|_0^2+\|\curlh q_h\|_0^2$, we have
 \begin{equation}\label{eq: 1}
 a(\bfQ_h,\bfdiv_h\bfQ_h;\bfQ_h,\bfdiv_h\bfQ_h)\gtrsim \|\bfQ_h\|_0^2+(t+h)^2\|\curlh q_h\|_0^2.
 \end{equation}

 We next need to bound $\|\grad s_h\|_0$ and $\|q_h\|_0$.
 For any $(v_h,\bfzeta_h)\in W_h\times\Theta_h$, it holds
\begin{equation*}
\tilde{b}(\bfQ_h,\bfdiv_h\bfQ_h;v_h,\bfzeta_h)=(\bfQ_h,\bfepsilon(\bfzeta_h))-(\grad s_h,\grad v_h-\bfR_h\bfzeta_h)+(\curlh q_h,\bfR_h\bfzeta_h)=0.
 \end{equation*}

 On one hand, choose $\bfzeta_h=0$ and $v_h=s_h$, then $(\grad s_h,\grad s_h)=0$. Since $s_h\in H_0^1(\Omega)$, we have $s_h=0$.

 On the other hand, choose $v_h=0$, then
 \begin{eqnarray*}
(\bfQ_h,\bfepsilon(\bfzeta_h))+(\curlh q_h,\bfR_h\bfzeta_h)\nonumber
&=&(\bfQ_h,\bfepsilon(\bfzeta_h))-(q_h,\rot(\bfR_h\bfzeta_h))\\
&=&(\bfQ_h,\bfepsilon(\bfzeta_h))-(q_h,\rot(\bfzeta_h))=0,\mbox{ for any } \bfzeta_h\in \Theta_h.
 \end{eqnarray*}
 For the above $q_h$, there exists $\bfzeta\in H_0^1(\Omega)^2$, such that
 \begin{equation*}
\rot\bfzeta=q_h, \mbox{ and } \|\bfzeta\|_1\lesssim\|q_h\|_0.
 \end{equation*}
So we get
\begin{equation*}
\|q_h\|_0\lesssim\frac{(q_h,\rot\bfzeta)}{\|\bfzeta\|_1}
         =\frac{(q_h,\rot(\Pi_h\bfzeta))}{\|\bfzeta\|_1}
          +\frac{(q_h,\rot(\bfR_h\bfzeta-\Pi_h\bfzeta))}{\|\bfzeta\|_1}
          +\frac{(q_h,\rot(\bfzeta-\bfR_h\bfzeta))}{\|\bfzeta\|_1}.
\end{equation*}
For the first term in the right-hand side of this relation, it holds
\begin{equation*}
\frac{(q_h,\rot(\Pi_h\bfzeta))}{\|\bfzeta\|_1}=\frac{(\bfQ_h,\bfepsilon(\Pi_h\bfzeta))}{\|\bfzeta\|_1}
\lesssim\frac{(\bfQ_h,\bfepsilon(\Pi_h\bfzeta))}{\|\Pi_h\bfzeta\|_1}\le\|\bfQ_h\|_0.
\end{equation*}
For the second term, it holds
\begin{eqnarray*}
&&\frac{(q_h,\rot(\bfR_h\bfzeta-\Pi_h\bfzeta))}{\|\bfzeta\|_1}=-\frac{(\curlh q_h,\bfR_h(\bfzeta-\Pi_h\bfzeta))}{\|\bfzeta\|_1}\\
&=&-\frac{(\curlh q_h,(\bfR_h\bfzeta-\bfzeta)+(\bfzeta-\Pi_h\bfzeta)+(\Pi_h\bfzeta-\bfR_h(\Pi_h\bfzeta)))}{\|\bfzeta\|_1}\\
&\lesssim&h\|\curlh q_h\|_0.
\end{eqnarray*}
For the third term, it holds
\begin{eqnarray*}
&&\frac{(q_h,\rot(\bfzeta-\bfR_h\bfzeta))}{\|\bfzeta\|_1}=\frac{\Sigma_{K\in \mathcal{T}_{h}}(q_h,\rot(\bfzeta-\bfR_h\bfzeta))_K}{\|\bfzeta\|_1}\\
&=&\frac{\Sigma_{K\in \mathcal{T}_{h}}(q_h-q_0,\rot(\bfzeta-\bfR_h\bfzeta))_K}{\|\bfzeta\|_1} \quad(\mbox{here } q_0=\frac1{|K|}\int_K q_h d\bfx)\\
&\lesssim&h\|\curlh q_h\|_0.
\end{eqnarray*}
So, for $\bfQ_h\in Ker B$ with the decomposition \eqref{eq: DiscreteDecomposition2}, we have $s_h=0$ and $\|q_h\|_0\lesssim\|\bfQ_h\|_0+h\|\curlh q_h\|_0$,
which, together with \eqref{eq: 1}, imply the coercivity \eqref{eq: coercivity}.
\end{proof}

\begin{lemma}\label{lemma: Coercivity2-MiSP3}
 The inf-sup condition
\begin{equation}\label{inf-sup-MiSP3}
\sup_{\bfQ_{h}\in\mathbb{M}_{h}}
\frac{\tilde{b}(\bfQ_{h},\bfdiv_h\bfQ_h;v_{h},\bfzeta_{h})}{|\|\bfQ_{h}|\|_{h,1}}\gtrsim|\|(v_{h},\bfzeta_{h})|\|_{h,2},
\mbox{ for all } (v_{h},\bfzeta_{h})\in W_{h}\times \Theta_{h}
\end{equation}
holds.
\end{lemma}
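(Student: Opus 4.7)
The plan is to construct, for each $(v_h,\bfzeta_h)\in W_h\times\Theta_h$, an explicit test tensor $\bfQ_h\in\mathbb{M}_h$ realizing the inf-sup lower bound. I would take the simple choice
$$\bfQ_h := \bfepsilon(\bfzeta_h) - v_h\,\bfI.$$
Since $\bfzeta_h$ and $v_h$ are piecewise $P_1$, $\bfQ_h$ is a piecewise $P_1$ symmetric tensor and lies in $\mathbb{M}_h$. A direct element-by-element calculation gives $\bfdiv_h\bfQ_h = -\grad v_h$, because $\bfepsilon(\bfzeta_h)$ is piecewise constant (hence has vanishing elementwise divergence) while $\bfdiv(v_h\bfI)=\grad v_h$. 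Therefore, in the orthogonal discrete Helmholtz decomposition of Lemma~\ref{lemma: DiscreteDecomposition} we simply have $s=-v_h\in W_h$ and $q=0\in P_h$.

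Next I would evaluate $\tilde b(\bfQ_h,\bfdiv_h\bfQ_h;v_h,\bfzeta_h)$. Two observations make the computation transparent: (i) $\bfR_h(\grad v_h)=\grad v_h$ by \eqref{eq: Rh-Property-1}, and (ii) integration by parts (valid since $v_h\in H_0^1(\Omega)$) gives $(v_h\bfI,\bfepsilon(\bfzeta_h))=(v_h,\Div\bfzeta_h)=-(\grad v_h,\bfzeta_h)$. A short manipulation then yields
$$\tilde b(\bfQ_h,\bfdiv_h\bfQ_h;v_h,\bfzeta_h) = \|\bfepsilon(\bfzeta_h)\|_0^2 + \|\grad v_h\|_0^2 + (\grad v_h,\bfzeta_h-\bfR_h\bfzeta_h).$$
The last term is a small perturbation: combining the approximation property \eqref{eq: Rh-Property-2} with Korn's inequality for $\bfzeta_h\in H_0^1(\Omega)^2$ gives $|(\grad v_h,\bfzeta_h-\bfR_h\bfzeta_h)|\lesssim h\|\grad v_h\|_0\|\bfepsilon(\bfzeta_h)\|_0$, which Young's inequality absorbs (for $h$ below a fixed mesh-independent threshold, as is standard) to produce
$$\tilde b(\bfQ_h,\bfdiv_h\bfQ_h;v_h,\bfzeta_h) \gtrsim \|\bfepsilon(\bfzeta_h)\|_0^2 + \|\grad v_h\|_0^2 \thickapprox |\|(v_h,\bfzeta_h)|\|_{h,2}^2.$$

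Finally I would bound $|\|\bfQ_h|\|_{h,1}$ directly from its definition. The triangle inequality and Poincar\'e's inequality for $v_h\in H_0^1(\Omega)$ yield $\|\bfQ_h\|_0\le \|\bfepsilon(\bfzeta_h)\|_0+\sqrt{2}\|v_h\|_0\lesssim \|\bfepsilon(\bfzeta_h)\|_0+\|\grad v_h\|_0$; with $s=-v_h$ and $q=0$, we have $\|\grad s\|_0=\|\grad v_h\|_0$, $\|q\|_0=0$ and $\|\curlh q\|_0=0$. Thus $|\|\bfQ_h|\|_{h,1}\lesssim |\|(v_h,\bfzeta_h)|\|_{h,2}$, and dividing the two bounds gives the asserted inf-sup estimate, uniformly in $t$ (no $t$-dependence appears anywhere). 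The main obstacle in this scheme is the cross term $(\grad v_h,\bfzeta_h-\bfR_h\bfzeta_h)$: it is the only piece that does not immediately match one of the target squared norms, and its absorption is precisely what calls for both the $O(h)$ approximation property of $\bfR_h$ and Korn's inequality applied to $\bfzeta_h$.
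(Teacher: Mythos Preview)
Your proof is correct and shares the same skeleton as the paper's: build $\bfQ_h$ from a ``strain'' piece proportional to $\bfepsilon(\bfzeta_h)$ (which has zero elementwise divergence) plus a ``divergence'' piece whose $\bfdiv_h$ equals $-\grad v_h$, then check the lower bound on $\tilde b$ and the upper bound on $|\|\bfQ_h|\|_{h,1}$. The difference is in the second piece. The paper takes an abstract $\bfQ_h^2$ with $\bfdiv_h\bfQ_h^2=-\grad v_h$ and the scaled estimate $\|\bfQ_h^2\|_0\le C_2 h\|\grad v_h\|_0$, and is then left with two cross terms, one of order $h$ (from $(\bfQ_h^2,\bfepsilon(\bfzeta_h))$) and one of order~$1$ (from $(\grad v_h,\bfR_h\bfzeta_h)$); it absorbs the latter by putting a free constant $C_1$ in front of $\bfepsilon(\bfzeta_h)$ and choosing $C_1$ large. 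Your explicit choice $-v_h\bfI$ does \emph{not} satisfy the $O(h)$ bound on $\|\bfQ_h^2\|_0$, but it buys a cancellation the paper does not exploit: the identity $(v_h\bfI,\bfepsilon(\bfzeta_h))=-(\grad v_h,\bfzeta_h)$ makes the order-$1$ contribution $(\grad v_h,\bfzeta_h)$ combine with $-(\grad v_h,\bfR_h\bfzeta_h)$ into the single $O(h)$ residual $(\grad v_h,\bfzeta_h-\bfR_h\bfzeta_h)$, which you then control by \eqref{eq: Rh-Property-2} and Korn. This is a genuinely cleaner computation. The only cost is the ``$h$ small enough'' restriction in your absorption step; you can remove it exactly as the paper does, by replacing $\bfepsilon(\bfzeta_h)$ with $C\bfepsilon(\bfzeta_h)$ and choosing $C$ large (since $h\le\mathrm{diam}(\Omega)$, the hidden constant in the cross term is uniformly bounded).
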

\begin{proof}
Given $\bfzeta_h\in\Theta_h$, let $\bfQ_h^1=C_1\bfepsilon(\bfzeta_h)$ (the constant $C_1$ to be determined), then $\bfdiv_h \bfQ_h^1=0$.
Given $v_h\in W_h$, there exists $\bfQ_h^2\in\mathbb{M}_h$, such that
$\bfdiv_h\bfQ_h^2=-\grad v_h \mbox{~and~} \|\bfQ_h^2\|_0\le C_2 h\|\bfdiv_h\bfQ_h^2\|_0.$
Suppose $\|\bfR_h\bfzeta_h\|_0\le C_3\|\bfepsilon(\bfzeta_h)\|_0$.

Take $\bfQ_h=\bfQ_h^1+\bfQ_h^2$, then
\begin{eqnarray*}
&&\tilde{b}(\bfQ_{h},\bfdiv_h\bfQ_h;v_{h},\bfzeta_{h})\\
&=&(\bfQ_h,\bfepsilon(\bfzeta_h))-(\bfdiv_h\bfQ_h,\grad v_h-\bfR_h\bfzeta_h)\\
&=&(\bfQ_h^1,\bfepsilon(\bfzeta_h))+(\bfQ_h^2,\bfepsilon(\bfzeta_h))+(\grad v_h,\grad v_h)-(\grad v_h,\bfR_h\bfzeta_h)\\
&\ge& C_1\|\bfepsilon(\bfzeta_h)\|_0^2-C_2 h\|\grad v_h\|_0\|\bfepsilon(\bfzeta_h)\|_0+\|\grad v_h\|_0^2-C_3\|\grad v_h\|_0\|\bfepsilon(\bfzeta_h)\|_0\\
&\ge& \left(C_1-C_2^2h^2-C_3^2\right)\|\bfepsilon(\bfzeta_h)\|_0^2+\frac{1}{2}\|\grad v_h\|_0^2.
\end{eqnarray*}
Let $C_1\ge 2(C_2^2h^2+C_3^2)$, then
\begin{eqnarray*}
\tilde{b}(\bfQ_{h},\bfdiv_h\bfQ_h;v_{h},\bfzeta_{h})\ge \frac{C_1}{2}\|\bfepsilon(\bfzeta_h)\|_0^2+\frac{1}{2}\|\grad v_h\|_0^2\gtrsim|\|(v_{h},\bfzeta_{h})|\|_{h,2}^2.
\end{eqnarray*}
On the other hand,
\begin{eqnarray*}
|\|\bfQ_h|\|_{h,1}&=&|\|\bfQ_h^1+\bfQ_h^2|\|_{h,1}=\|\bfQ_h^1+\bfQ_h^2\|_0+\|\grad v_h\|_0\\
&\le&\|\bfQ_h^1\|_0+\|\bfQ_h^2\|_0+\|\grad v_h\|_0\\
&\le&C_1\|\bfepsilon(\bfzeta_h)\|_0+(C_2h+1)\|\grad v_h\|_0\lesssim |\|(v_{h},\bfzeta_{h})|\|_{h,2}.
\end{eqnarray*}
Then the result (\ref{inf-sup-MiSP3}) holds.
\end{proof}

 \begin{theorem}\label{theorem: error-MiSP3}
 Let $(\bfM,\bfgamma=\bfdiv\bfM,w,\bfbeta)\in \mathbb{M}\times\Gamma\times W\times \Theta$ be the
 solution of the problem \eqref{eq:WeakForm2-a}-\eqref{eq:WeakForm2-b}. Then the discretization
 problem \eqref{eq:DiscreteForm2-a}-\eqref{eq:DiscreteForm2-b} admits a unique solution
 $(\bfM_{h},w_{h},\bfbeta_{h})\in \mathbb{M}_{h}\times W_{h}\times\Theta_{h}$ such that
\begin{eqnarray*}
&&|\|\bfM-\bfM_{h}|\|_{h,1}+|\|(w-w_{h},\bfbeta-\bfbeta_{h})|\|_{h,2}\\
& \lesssim &\inf\limits_{\bfQ_{h}\in
\mathbb{M}_{h}}|\|\bfM-\bfQ_{h}|\|_{h,1}+\inf\limits_{(v_{h},\bfzeta_{h})\in W_h\times\Theta_h}|\|(w-v_{h},\bfbeta-\bfzeta_{h})|\|_{h,2}+ht\|\bfgamma\|_{1}+h\|\bfgamma\|_{0}.\\
\end{eqnarray*}
\end{theorem}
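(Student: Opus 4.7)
The plan is to apply the Babuska--Brezzi theory for saddle-point problems with a perturbed bilinear form (a Strang-type estimate). The continuity Lemmas \ref{lemma: Continuity1-MiSP3}--\ref{lemma: Continuity2-MiSP3}, the coercivity on the discrete kernel (Lemma \ref{lemma: Coercivity1-MiSP3}), and the inf-sup condition (Lemma \ref{lemma: Coercivity2-MiSP3}) are exactly the hypotheses needed to guarantee that the discrete problem admits a unique solution and to yield the abstract bound
$$|\|\bfM-\bfM_h|\|_{h,1} + |\|(w-w_h,\bfbeta-\bfbeta_h)|\|_{h,2} \lesssim \text{(best approximation error)} + E_{\text{cons}},$$
where $E_{\text{cons}}$ accounts for the replacement of $b$ by $\tilde b$ in the discrete formulation.

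Inserting the continuous solution $(\bfM,\bfgamma=\bfdiv\bfM,w,\bfbeta)$ into the discrete system and subtracting then produces two consistency contributions. Testing against $\bfQ_h\in\mathbb{M}_h$ in the first equation, and using $\bfgamma=\bfdiv\bfM=\bfdiv_h\bfM$, gives
$$E_1(\bfQ_h)=-\int_\Omega \bfdiv_h\bfQ_h\cdot (I-\bfR_h)(\grad w-\bfbeta)\,d\bfx,$$
while testing against $(v_h,\bfzeta_h)\in W_h\times\Theta_h$ in the second equation, together with the identity $\bfR_h(\grad v_h)=\grad v_h$ of Lemma \ref{lemma: error_Rh_MiSP3}, gives
$$E_2(v_h,\bfzeta_h)=\int_\Omega \bfgamma\cdot (I-\bfR_h)\bfzeta_h\,d\bfx.$$

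For $E_2$, the approximation property $\|\bfeta-\bfR_h\bfeta\|_0\lesssim h\|\bfeta\|_1$ from Lemma \ref{lemma: error_Rh_MiSP3} together with Korn's inequality $\|\bfzeta_h\|_1\lesssim\|\bfepsilon(\bfzeta_h)\|_0\le|\|(v_h,\bfzeta_h)|\|_{h,2}$ immediately yields $|E_2|\lesssim h\|\bfgamma\|_0\,|\|(v_h,\bfzeta_h)|\|_{h,2}$, producing the $h\|\bfgamma\|_0$ term. For $E_1$ the crucial observation is the constitutive identity $\grad w-\bfbeta=\lambda^{-1}t^2\bfgamma$, which extracts a factor of $t^2$ from the integrand. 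Applying $\|(I-\bfR_h)\bfgamma\|_0\lesssim h\|\bfgamma\|_1$ and the bound $\|\bfdiv_h\bfQ_h\|_0\lesssim (h+t)^{-1}|\|\bfQ_h|\|_{h,1}$, which itself follows from the discrete Helmholtz decomposition $\bfdiv_h\bfQ_h=\grad s+\curlh q$ together with the fact that $\|\grad s\|_0$ and $(h+t)\|\curlh q\|_0$ are both controlled by $|\|\bfQ_h|\|_{h,1}$, yields $|E_1|\lesssim \frac{ht^2}{h+t}\|\bfgamma\|_1\,|\|\bfQ_h|\|_{h,1}\le ht\|\bfgamma\|_1\,|\|\bfQ_h|\|_{h,1}$.

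The main obstacle is the careful balancing of $t$-powers in $E_1$: the inverse-type estimate on $\|\bfdiv_h\bfQ_h\|_0$ carries an unbounded factor $(h+t)^{-1}$, and only the $t^2$ produced by the constitutive law tames it uniformly, giving the desired $ht\|\bfgamma\|_1$ in both the thin-plate regime $t\lesssim h$ and the thick-plate regime $t\gtrsim h$. Once both consistency bounds are in hand, substituting into the abstract Brezzi-type error estimate and taking the infimum over $\bfQ_h\in\mathbb{M}_h$ and $(v_h,\bfzeta_h)\in W_h\times\Theta_h$ finishes the proof.
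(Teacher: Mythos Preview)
Your proposal is correct and follows essentially the same route as the paper's proof. The paper spells out the saddle-point argument by hand (introducing the set $Z_h(g)$, using coercivity on $\mathrm{Ker}\,B$ for $\tilde{\bfQ}_h-\bfM_h$, and constructing an auxiliary $\bar{\bfQ}_h$ via the inf-sup condition), whereas you invoke the abstract Brezzi/Strang framework; but the two consistency terms you isolate, namely $(\bfdiv_h\bfQ_h,(I-\bfR_h)(\grad w-\bfbeta))$ and $(\bfgamma,(I-\bfR_h)\bfzeta_h)$, and your bounds $ht\|\bfgamma\|_1$ and $h\|\bfgamma\|_0$ for them (via $\grad w-\bfbeta=\lambda^{-1}t^2\bfgamma$ and $\|\bfdiv_h\bfQ_h\|_0\lesssim(h+t)^{-1}|\|\bfQ_h|\|_{h,1}$) are exactly what the paper uses.
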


\begin{proof}
 Since
\begin{equation*}
a(\bfM,\bfgamma;\bfQ_{h},\bfdiv_h\bfQ_h)+\tilde{b}(\bfQ_{h},\bfdiv_h\bfQ_h;w,\bfbeta)-(\bfdiv_h\bfQ_h,\grad
w-\bfbeta-\bfR_h(\grad w-\bfbeta))=0, \forall
\bfQ_{h}\in\mathbb{M}_{h},
\end{equation*}
\begin{equation*}
 a(\bfM_{h},\bfdiv_h\bfM_h;\bfQ_{h},\bfdiv_h\bfQ_h)+\tilde{b}(\bfQ_{h},\bfdiv_h\bfQ_h;w_{h},\bfbeta_{h})=0, \forall \bfQ_{h}\in\mathbb{M}_{h},
\end{equation*}
 then for all $\bfQ_{h}\in\mathbb{M}_{h}$, it holds
 \begin{eqnarray*}
 a(\bfM-\bfM_{h},\bfgamma-\bfdiv_h\bfM_h;\bfQ_{h},\bfdiv_h\bfQ_h)+\tilde{b}(\bfQ_{h},\bfdiv_h\bfQ_h;w-w_{h},\bfbeta-\bfbeta_{h})\\
 -(\bfdiv_h\bfQ_h,\grad
 w-\bfbeta-\bfR_h(\grad w-\bfbeta))=0.
 \end{eqnarray*}

Denote
\begin{equation*}
Z_{h}(g)=\{\bfQ_{h}\in\mathbb{M}_{h}:
\tilde{b}(\bfQ_{h},\bfdiv_h\bfQ_h;v_{h},\bfzeta_{h})=-(g,v_{h}),\ \forall
(v_{h},\bfzeta_{h})\in W_{h}\times\Theta_{h}\}.
\end{equation*}
Let $\tilde{\bfQ}_{h}$ be any element of $Z_{h}(g)$. Since
$\tilde{\bfQ}_{h}-\bfM_{h}\in Z_h(0)=Ker B$, then
\begin{equation*}
\begin{array}{ll}
&|\|\tilde{\bfQ}_{h}-\bfM_{h}|\|_{h,1}^{2}\\\vspace{0.1cm}
\lesssim
&a(\tilde{\bfQ}_{h}-\bfM_{h},\bfdiv_h(\tilde{\bfQ}_{h}-\bfM_{h});\tilde{\bfQ}_{h}-\bfM_{h},\bfdiv_h(\tilde{\bfQ}_{h}-\bfM_{h}))\\ \vspace{0.1cm}
=&a(\tilde{\bfQ}_{h}-\bfM,\bfdiv_h\tilde{\bfQ}_h-\bfgamma;\tilde{\bfQ}_{h}-\bfM_h,\bfdiv_h(\tilde{\bfQ}_h-\bfM_h))\\\vspace{0.1cm}
  &+a(\bfM-\bfM_{h},\bfgamma-\bfdiv_h\bfM_{h};\tilde{\bfQ}_{h}-\bfM_{h},\bfdiv_h(\tilde{\bfQ}_{h}-\bfM_{h}))\\\vspace{0.1cm}
=&a(\tilde{\bfQ}_{h}-\bfM,\bfdiv_h(\tilde{\bfQ}_h-\bfM);\tilde{\bfQ}_{h}-\bfM_{h},\bfdiv_h(\tilde{\bfQ}_{h}-\bfM_{h}))\\\vspace{0.1cm}
  &-\tilde{b}(\tilde{\bfQ}_{h}-\bfM_{h},\bfdiv_h(\tilde{\bfQ}_h-\bfM_h);w-w_{h},\bfbeta-\bfbeta_{h})\\\vspace{0.1cm}
&+(\bfdiv_h(\tilde{\bfQ}_h-\bfM_h),\grad w-\bfbeta-\bfR_h(\grad w-\bfbeta))\\\vspace{0.1cm}
\lesssim
 &|\|\tilde{\bfQ}_{h}-\bfM_h|\|_{h,1}(|\|\tilde{\bfQ}_{h}-\bfM|\|_{h,1}
 +|\|(w-v_{h},\bfbeta-\bfzeta_{h})|\|_{h,2}+ht\|\bfgamma\|_{1}).
\end{array}
\end{equation*}
So we have
\begin{equation*}
|\|\tilde{\bfQ}_{h}-\bfM_{h}|\|_{h,1}
\lesssim
 |\|\tilde{\bfQ}_{h}-\bfM|\|_{h,1}+|\|(w-v_{h},\bfbeta-\bfzeta_{h})|\|_{h,2}+ht\|\bfgamma\|_{1}.
\end{equation*}
Then, by using the triangle inequality, we get
\begin{equation}\label{eq: 2}
|\|\bfM-\bfM_{h}|\|_{h,1}
\lesssim
 \inf\limits_{\tilde{\bfQ}_{h}\in Z_{h}(g)}|\|\tilde{\bfQ}_{h}-\bfM|\|_{h,1}
 +\inf\limits_{(v_{h},\bfzeta_{h})\in
 W_h\times\Theta_h}|\|(w-v_{h},\bfbeta-\bfzeta_{h})|\|_{h,2}+ht\|\bfgamma\|_{1}.
\end{equation}

For any $\bfQ_{h}\in \mathbb{M}_{h}$, there
exists $\bar{\bfQ}_{h}\in\mathbb{M}_{h}$, such that, for all
$(v_{h},\bfzeta_{h})\in W_h\times\Theta_h$,
\begin{equation*}
\tilde{b}(\bar{\bfQ}_{h},\bfdiv_h\bar{\bfQ}_{h};v_{h},\bfzeta_{h})=\tilde{b}(\bfM-\bfQ_{h},\bfdiv_h(\bfM-\bfQ_{h});v_{h},\bfzeta_{h})-(\bfgamma,\grad
v_{h}-\bfzeta_{h}-\bfR_h(\grad v_{h}-\bfzeta_{h}))
\end{equation*}
and
\begin{eqnarray}
 \nonumber
 &&|\|\bar{\bfQ}_{h}|\|_{h,1}\\
 \nonumber
 &\lesssim
 &\sup\limits_{(v_{h},\bfzeta_{h})\in W_h\times\Theta_h}\frac{\tilde{b}(\bar{\bfQ}_{h},\bfdiv_h\bar{\bfQ}_{h};v_{h},\bfzeta_{h})}{|\|(v_{h},\bfzeta_{h})|\|_{h,2}}\\
 \nonumber
 &\lesssim
 &\sup\limits_{(v_{h},\bfzeta_{h})\in W_h\times\Theta_h}\frac{\tilde{b}(\bfM-\bfQ_{h},\bfdiv_h(\bfM-\bfQ_h);v_{h},\bfzeta_{h})
  -(\bfgamma,\grad v_{h}-\bfzeta_{h}-\bfR_h(\grad v_{h}-\bfzeta_{h}))}{|\|(v_{h},\bfzeta_{h})|\|_{h,2}}\\
 \nonumber
 &=
 &\sup\limits_{(v_{h},\bfzeta_{h})\in W_h\times\Theta_h}\frac{\tilde{b}(\bfM-\bfQ_{h},\bfdiv_h(\bfM-\bfQ_h);v_{h},\bfzeta_{h})
  +(\bfgamma,\bfzeta_{h}-\bfR_h\bfzeta_{h})}{|\|(v_{h},\bfzeta_{h})|\|_{h,2}}\\
 \nonumber
 &\lesssim
 &|\|\bfM-\bfQ_{h}|\|_{h,1}+h\|\bfgamma\|_{0}.
\end{eqnarray}
Choose $\tilde{\bfQ}_{h}=\bar{\bfQ}_{h}+\bfQ_{h}$, then
$\tilde{\bfQ}_{h}\in Z_{h}(g)$. Thus we get
\begin{equation*}
\nonumber
|\|\bfM-\tilde{\bfQ}_{h}|\|_{h,1}
=|\|\bfM-\bfQ_{h}-\bar{\bfQ}_{h}|\|_{h,1}
\leq|\|\bfM-\bfQ_{h}|\|_{h,1}+|\|\bar{\bfQ}_{h}|\|_{h,1}
\lesssim
|\|\bfM-\bfQ_{h}|\|_{h,1}+h\|\bfgamma\|_{0}.
\end{equation*}
This estimate and \eqref{eq: 2} imply
\begin{equation}\label{eq: 3}
 |\|\bfM-\bfM_{h}|\|_{h,1}
 \lesssim
 \inf_{\bfQ_{h}\in \mathbb{M}_{h}}|\|\bfM-\bfQ_{h}|\|_{h,1}
 +\inf_{(v_{h},\bfzeta_{h})\in W_h\times\Theta_h}|\|(w-v_{h},\bfbeta-\bfzeta_{h})|\|_{h,2}
 +ht\|\bfgamma\|_{1}+h\|\bfgamma\|_{0}.
\end{equation}

On the other hand, from the coercivity and continuity properties we get
\begin{eqnarray}
\nonumber
 &&|\|(v_{h}-w_{h},\bfzeta_{h}-\bfbeta_{h})|\|_{h,2}\\
 \nonumber
  \displaystyle&\lesssim
 &\sup\limits_{\bfQ_{h}\in\mathbb{M}_{h}}\frac{\tilde{b}(\bfQ_{h},\bfdiv_h\bfQ_h;v_{h}-w_{h},\bfzeta_{h}-\bfbeta_{h})}{|\|\bfQ_{h}|\|_{h,1}}\\
 \nonumber
  \displaystyle&=&\sup\limits_{\bfQ_{h}\in\mathbb{M}_{h}}\left\{\frac{ -a(\bfM-\bfM_{h},\bfdiv_h(\bfM-\bfM_{h});\bfQ_{h},\bfdiv_h\bfQ_h)-\tilde{b}(\bfQ_{h},\bfdiv_h\bfQ_h;w-v_{h},\bfbeta-\bfzeta_{h})}{|\|\bfQ_{h}|\|_{h,1}}\right.\\
 \nonumber
 \displaystyle &&\left.\qquad\qquad\qquad+\frac{
  (\bfdiv_h\bfQ_h,\grad w-\bfbeta-\bfR_h(\grad w-\bfbeta))}{|\|\bfQ_{h}|\|_{h,1}}\right\}\\
  \displaystyle\nonumber
 &\lesssim
 &|\|\bfM-\bfM_{h}|\|_{h,1}+|\|(w-v_{h},\bfbeta-\bfzeta_{h})|\|_{h,2}+ht\|\bfgamma\|_{1}.
\end{eqnarray}
This inequality and \eqref{eq: 3} imply
\begin{eqnarray}
\label{eq: 4}&&|\|(w-w_{h},\bfbeta-\bfbeta_{h})|\|_{h,2} \\
\nonumber &\lesssim &\inf_{\bfQ_{h}\in
\mathbb{M}_{h}}|\|\bfM-\bfQ_{h}|\|_{h,1}+\inf_{(v_{h},\bfzeta_{h})\in
W_h\times\Theta_h}|\|(w-v_{h},\bfbeta-\bfzeta_{h})|\|_{h,2}+ht\|\bfgamma\|_{1}+h\|\bfgamma\|_{0}.
\end{eqnarray}
A combination of \eqref{eq: 3} and \eqref{eq: 4} completes the proof.
\end{proof}

 To obtain the convergence order, we first need to consider error estimates for the approximations of finite element spaces in
 Lemma \ref{lemma: error MiSP3_1}-\ref{lemma: error MiSP3_2}.
\begin{lemma}\label{lemma: error MiSP3_1}
It holds
\begin{equation*}
 \inf\limits_{\bfQ_{h}\in\mathbb{M}_{h}}|\|\bfM-\bfQ_{h}|\|_{h,1}
 \lesssim
 h(\|\bfM\|_1+\|r\|_2+\|p\|_1+t\|p\|_2).
\end{equation*}
\end{lemma}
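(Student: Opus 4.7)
\emph{Proof plan.} Expand the mesh-dependent norm on the error:
\begin{equation*}
|\|\bfM - \bfQ_h|\|_{h,1} = \|\bfM - \bfQ_h\|_0 + (h+t)\|\curlh(p - q_h)\|_0 + \|p - q_h\|_0 + \|\grad(r - s_h)\|_0,
\end{equation*}
where $\bfdiv \bfM = \grad r + \curl p$ is the continuous Helmholtz decomposition of $\bfgamma=\bfdiv\bfM$ from \eqref{eq:Helmholtz-gamma}, and $\bfdiv_h \bfQ_h = \grad s_h + \curlh q_h$ is the discrete Helmholtz decomposition from Lemma \ref{lemma: DiscreteDecomposition}. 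The strategy is to engineer $\bfQ_h \in \mathbb{M}_h$ so that $(s_h,q_h)$ are \emph{prescribed} good approximants of $(r,p)$, thereby controlling every term separately.

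\textbf{Step 1: scalar approximants.} Let $r_I \in W_h$ be the standard continuous piecewise-linear Lagrange interpolant of $r$, so $\|\grad(r - r_I)\|_0 \lesssim h\|r\|_2$. Let $p_I \in P_h$ be a Crouzeix--Raviart type interpolant of $p$ defined by the edge-mean conditions $\int_e p_I\,ds = \int_e p\,ds$ for every edge $e$ together with $\int_\Omega p_I\,d\bfx = 0$; this is well-posed thanks to the midpoint continuity built into $P_h$. Standard nonconforming interpolation estimates yield $\|p - p_I\|_0 \lesssim h\|p\|_1$, the $H^1$-stability bound $\|\curlh(p - p_I)\|_0 \lesssim \|p\|_1$, and, when $p \in H^2$, the approximation bound $\|\curlh(p - p_I)\|_0 \lesssim h\|p\|_2$ (recall $|\curl\varphi|=|\grad\varphi|$ pointwise).

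\textbf{Step 2: lift to $\mathbb{M}_h$.} Using the surjectivity form of Lemma \ref{lemma: DiscreteDecomposition}, select $\bfQ_h \in \mathbb{M}_h$ with $\bfdiv_h \bfQ_h = \grad r_I + \curlh p_I$, so that $s_h = r_I$ and $q_h = p_I$. Such a $\bfQ_h$ is unique only modulo $\ker\bfdiv_h$, and I would pick the representative closest to $\bfM$ in $L^2$. Combining a standard componentwise $P_1$ interpolation of $\bfM$ (which gives an $O(h\|\bfM\|_1)$ $L^2$-approximant before the divergence constraint is imposed) with a local, $L^2$-stable right-inverse of $\bfdiv_h$ drawn from the construction in \cite{xiaoliang1994simple}, one obtains $\|\bfM - \bfQ_h\|_0 \lesssim h\|\bfM\|_1$.

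\textbf{Step 3: assemble.} With $s_h=r_I$ and $q_h=p_I$, one has
\begin{equation*}
\|\grad(r - s_h)\|_0 \lesssim h\|r\|_2, \qquad \|p - q_h\|_0 \lesssim h\|p\|_1,
\end{equation*}
while splitting the weight yields
\begin{equation*}
(h+t)\|\curlh(p - p_I)\|_0 = h\|\curlh(p - p_I)\|_0 + t\|\curlh(p - p_I)\|_0 \lesssim h\|p\|_1 + h t\|p\|_2,
\end{equation*}
by using the $H^1$-stability bound on the first summand and the $H^2$-approximation bound on the second. Adding the $L^2$ estimate from Step 2 produces the claim. The main obstacle is Step 2: producing a single $\bfQ_h$ that is simultaneously an $O(h)$ $L^2$-approximant of $\bfM$ and has prescribed discrete Helmholtz components. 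This rests on the existence of an $L^2$-stable right-inverse of $\bfdiv_h$; if a direct construction proves inconvenient, the fallback is to take $\bfQ_h$ as the componentwise Lagrange interpolant of $\bfM$ and prove a posteriori, by testing the identity $\bfdiv_h(\bfM - \bfQ_h) = \grad(r - s_h) + \curlh(p - q_h)$ against suitable elements of $W_h$ and $P_h$ and using the stability of the discrete Helmholtz decomposition, that the resulting $s_h, q_h$ approximate $r, p$ at the required rate.
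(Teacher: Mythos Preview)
Your overall strategy---prescribe the discrete Helmholtz components $(s_h,q_h)$ to be good approximants of $(r,p)$ and then lift to $\mathbb{M}_h$---is exactly the paper's. The difference lies in how Step~2 is executed, and the paper's device removes precisely what you flagged as the ``main obstacle''. Instead of starting from a $P_1$ approximant of $\bfM$ and then correcting its divergence, the paper takes $\bfQ_h^1$ to be the \emph{piecewise constant} $L^2$-projection of $\bfM$, so that $\bfdiv_h\bfQ_h^1=0$ automatically; it then adds $\bfQ_h^2$ chosen with $\bfdiv_h\bfQ_h^2=\grad(I_h r)+\curlh(\Pi_h p)$ and $\|\bfQ_h^2\|_0\approx h\|\bfdiv_h\bfQ_h^2\|_0$. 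The sum $\bfQ_h=\bfQ_h^1+\bfQ_h^2$ then has exactly the prescribed Helmholtz components, and the two approximation tasks (being $L^2$-close to $\bfM$, having the right divergence) are completely decoupled. No ``$L^2$-stable right-inverse combined with an interpolant'' is needed, and no minimisation over $\ker\bfdiv_h$ enters.

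Two minor remarks on your version. First, your claimed bound $\|\bfM-\bfQ_h\|_0\lesssim h\|\bfM\|_1$ in Step~2 is not quite what your construction delivers: if you correct a $P_1$ interpolant by a local right-inverse of $\bfdiv_h$, the correction term is controlled by $h(\|\grad r_I\|_0+\|\curlh p_I\|_0+\|\bfdiv_h\bfQ_h^1\|_0)\lesssim h(\|\bfM\|_1+\|r\|_2+\|p\|_1)$, so the extra terms $\|r\|_2,\|p\|_1$ appear here already. This is harmless for the final estimate but worth stating correctly. Second, the paper uses the Scott--Zhang interpolant $\Pi_h p$ rather than a Crouzeix--Raviart interpolant for $p$; either choice yields the needed estimates $\|p-\Pi_h p\|_0\lesssim h\|p\|_1$, $\|\curlh\Pi_h p\|_0\lesssim\|p\|_1$, and $\|\curl p-\curlh\Pi_h p\|_0\lesssim h\|p\|_2$, so this is a matter of taste. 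Your fallback idea (take the $P_1$ interpolant of $\bfM$ and analyse the resulting $s_h,q_h$ a~posteriori) would require a stability estimate for the discrete Helmholtz decomposition that the paper does not establish, and is best avoided.
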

\begin{proof}
For the exact solution $\bfM$, first let $\bfQ_h^1$ be its piecewise constant $L^2$ projection, then
 $$\|\bfM-\bfQ_h^1\|_0\lesssim h\|\bfM\|_1.$$
Basing on Theorem \ref{Th:Regularity}, we have
$\bfgamma=\bfdiv\bfM=\grad r+\curl p, \text{~with~} (p,r)\in H_0^1(\Omega)\times \hat{H}^1(\Omega).$
Choose $\bfQ_h^2$ satisfying $\bfdiv_h\bfQ_h^2=\grad (I_h r)+\curlh(\Pi_h p)$
(we recall that $I_h$ and $\Pi_h$ are respectively the nodal interpolation and the  Scott-Zhang interpolation operators), and
$$\|\bfQ_h^2\|_0\approx h \|\bfdiv_h\bfQ_h^2\|_0\lesssim h(\|r\|_2+\|p\|_1).$$

Take $\bfQ_h=\bfQ_h^1+\bfQ_h^2$, then we can obtain the desired result
\begin{eqnarray*}
&&|\|\bfM-\bfQ_{h}|\|_{h,1}\\
&\le&\|\bfM-\bfQ_h^1\|_0+\|\bfQ_h^2\|_0+(h+t)\|\curl p-\curlh (\Pi_hp)\|_0\\
&&+\|\grad r-\grad(I_h r)\|_0+\|p-\Pi_hp\|_0\\
&\lesssim&h\|\bfM\|_1+h(\|r\|_2+\|p\|_1)+h\|p\|_1+ht\|p\|_2+h\|r\|_2+h\|p\|_1\\
&\lesssim&h\|\bfM\|_1+h\|r\|_2+h\|p\|_1+ht\|p\|_2,
\end{eqnarray*}
where  we have used the approximation properties
$$\|p-\Pi_hp\|_0\lesssim h\|p\|_1,\, \|\curlh(\Pi_hp)\|_0\lesssim \|p\|_1,\, \text{and~} \|\curl p-\curlh (\Pi_hp)\|_0\lesssim h\|p\|_2.$$
\end{proof}

\begin{lemma}\label{lemma: error MiSP3_2}
It holds
\begin{equation}
\inf\limits_{(v_{h},\bfzeta_{h})\in
W_{h}\times\Theta_{h}}|\|(w-v_{h},\bfbeta-\bfzeta_{h})|\|_{h,2}
\lesssim
 h(\|\bfbeta\|_{2}+\|w\|_{2}).
\end{equation}
\end{lemma}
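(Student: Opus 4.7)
The plan is to pick standard interpolants for $w$ and $\bfbeta$ and then use the two key properties of $\bfR_h$ already established in Lemma \ref{lemma: error_Rh_MiSP3} (namely the commuting identity $\bfR_h(\grad v_h)=\grad v_h$ for $v_h\in W_h$ and the $O(h)$ approximation estimate \eqref{eq: Rh-Property-2}) to control the sheared term $\|\bfR_h(\grad(w-v_h))\|_0$.

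More concretely, I take $v_h=I_h w\in W_h$ and $\bfzeta_h=I_h\bfbeta\in\Theta_h$, where $I_h$ denotes the standard continuous piecewise linear nodal interpolation (legitimate since $w\in H^2(\Omega)$ and $\bfbeta\in H^2(\Omega)^2$ in two dimensions). For the first ingredient of $|\|(w-v_h,\bfbeta-\bfzeta_h)|\|_{h,2}$, I simply invoke the classical interpolation estimate
\[
\|\bfepsilon(\bfbeta-I_h\bfbeta)\|_0\le \|\bfbeta-I_h\bfbeta\|_1\lesssim h\|\bfbeta\|_2.
\]

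The main point is the second ingredient $\|\bfR_h(\grad(w-I_hw))\|_0$. Here I use \eqref{eq: Rh-Property-1} to write $\bfR_h(\grad I_hw)=\grad I_hw$, and add and subtract $\grad w$:
\[
\bfR_h(\grad(w-I_hw))=\bfR_h(\grad w)-\grad I_hw
=\bigl(\bfR_h(\grad w)-\grad w\bigr)+\grad(w-I_hw).
\]
For the first term on the right I apply \eqref{eq: Rh-Property-2} with $\bfeta=\grad w\in H^1(\Omega)^2\cap H_0(\rot,\Omega)$ (note $\grad w\cdot\bft=0$ on $\partial\Omega$ since $w\in H_0^1$), giving $\|\bfR_h(\grad w)-\grad w\|_0\lesssim h\|\grad w\|_1\lesssim h\|w\|_2$. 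The second term is a standard nodal interpolation error: $\|\grad(w-I_hw)\|_0\lesssim h\|w\|_2$.

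Summing the contributions yields the asserted bound $h(\|\bfbeta\|_2+\|w\|_2)$. No serious obstacle is anticipated: the entire argument is a direct application of the previously established properties of $\bfR_h$ together with textbook interpolation estimates, and the only mild subtlety to flag is verifying that $\grad w\in H_0(\rot,\Omega)$ so that \eqref{eq: Rh-Property-2} is applicable.
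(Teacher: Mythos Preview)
Your proposal is correct and follows essentially the same approach as the paper: you pick concrete interpolants, use $\bfR_h(\grad v_h)=\grad v_h$ to rewrite $\bfR_h(\grad(w-v_h))$ as $(\bfR_h(\grad w)-\grad w)+\grad(w-v_h)$, and apply \eqref{eq: Rh-Property-2} together with standard interpolation estimates. The paper does the identical splitting after first observing that the two ingredients of $|\|(\cdot,\cdot)|\|_{h,2}$ decouple into separate infima.
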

\begin{proof}
By the definition of mesh-dependent norm, we immediately get
\begin{eqnarray*}
&&\inf\limits_{(v_{h},\bfzeta_{h})\in W_{h}\times\Theta_{h}}|\|(w-v_{h},\bfbeta-\bfzeta_{h})|\|_{h,2}\\
&=&\inf\limits_{\bfzeta_h\in \Theta_h}\|\bfepsilon(\bfbeta)-\bfepsilon(\bfzeta_h)\|_0
+\inf\limits_{v_{h}\in W_{h}}\|\bfR_h(\grad w)-\bfR_h(\grad v_h)\|_0 \\
&\le&\inf\limits_{\bfzeta_h\in \Theta_h}\|\bfepsilon(\bfbeta)-\bfepsilon(\bfzeta_h)\|_0
+\|\bfR_h(\grad w)-\grad w\|_0
+\inf\limits_{v_{h}\in W_{h}}\|\grad w-\grad v_h\|_0 \\
&\lesssim& h(\|\bfbeta\|_{2}+\|w\|_{2}).
\end{eqnarray*}
\end{proof}

 \begin{theorem}\label{theorem: error2-MiSP3} The discretization
 problem \eqref{eq:DiscreteForm2-a}-\eqref{eq:DiscreteForm2-b} admits a unique
 solution $(\bfM_{h},w_h, \bfbeta_{h})\in \mathbb{M}_{h}\times W_{h}\times\Theta_{h}$ such that
\begin{equation}\label{eq: ConvergenceOrder1-MiSP3}
|\|\bfM-\bfM_{h}|\|_{h,1}+|\|(w-w_{h},\bfbeta-\bfbeta_{h})|\|_{h,2}
 \lesssim h\left(\|\bfM\|_{1}+\|\bfbeta\|_{2}+\|w\|_2+\|r\|_2+\|p\|_1+t\|p\|_2\right)\lesssim h \|g\|_0.
\end{equation}
Furthermore, it holds
\begin{equation}\label{eq: ConvergenceOrder2-MiSP3}
 \begin{array}{ll}
 &\|\bfM-\bfM_{h}\|_{0}+(t+h)\|\bfgamma-\bfgamma_h\|_0+\|w-w_{h}\|_{1}+\|\bfbeta-\bfbeta_{h}\|_{1}\\
 \lesssim& h\left(\|\bfM\|_{1}+\|\bfbeta\|_{2}+\|w\|_2+\|r\|_2+\|p\|_1+t\|p\|_2\right)\lesssim h \|g\|_0.
 \end{array}
\end{equation}
\end{theorem}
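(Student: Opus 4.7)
The plan is to chain the abstract error estimate of Theorem \ref{theorem: error-MiSP3} together with the approximation bounds of Lemmas \ref{lemma: error MiSP3_1}--\ref{lemma: error MiSP3_2} and the regularity estimate \eqref{eq:Regularity}, and then translate the mesh-dependent norms into the natural norms appearing in \eqref{eq: ConvergenceOrder2-MiSP3}. Step~1 is bound \eqref{eq: ConvergenceOrder1-MiSP3}: substituting Lemmas \ref{lemma: error MiSP3_1} and \ref{lemma: error MiSP3_2} into the right-hand side of Theorem \ref{theorem: error-MiSP3} yields an $O(h)$ bound, after observing that the residual terms $ht\|\bfgamma\|_1+h\|\bfgamma\|_0$ can be absorbed via $\bfgamma=\grad r+\curl p$ (so $\|\bfgamma\|_0\lesssim \|r\|_2+\|p\|_1$ and $ht\|\bfgamma\|_1\lesssim h\|r\|_2+ht\|p\|_2$). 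Invoking \eqref{eq:Regularity} then gives the final $\lesssim h\|g\|_0$ bound; well-posedness of the discrete problem is already part of the statement of Theorem \ref{theorem: error-MiSP3}.

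Step~2 converts the mesh-dependent bounds into the three ``easy'' terms of \eqref{eq: ConvergenceOrder2-MiSP3}. The moment error satisfies $\|\bfM-\bfM_h\|_0\le |\|\bfM-\bfM_h|\|_{h,1}$ directly from the definition of the norm. Korn's inequality on $\bfbeta-\bfbeta_h\in H_0^1(\Omega)^2$ yields $\|\bfbeta-\bfbeta_h\|_1\lesssim \|\bfepsilon(\bfbeta-\bfbeta_h)\|_0\le |\|(w-w_h,\bfbeta-\bfbeta_h)|\|_{h,2}$. For the displacement, inserting $\bfR_h$ and using \eqref{eq: Rh-Property-1}--\eqref{eq: Rh-Property-2} together with a Poincar\'e inequality gives
$$\|w-w_h\|_1\lesssim \|\grad w-\bfR_h\grad w\|_0+\|\bfR_h\grad(w-w_h)\|_0\lesssim h\|w\|_2+|\|(w-w_h,\bfbeta-\bfbeta_h)|\|_{h,2}.$$

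The delicate term is $(t+h)\|\bfgamma-\bfgamma_h\|_0$, and it is the main obstacle: the prefactor $(t+h)$ has to be reabsorbed exactly by the weight built into $|\|\cdot|\|_{h,1}$, so that no spurious $t/h$ or $h/t$ factor appears. Picking the interpolant $\bfQ_h$ of Lemma \ref{lemma: error MiSP3_1} with $\bfdiv_h\bfQ_h=\grad(I_h r)+\curlh(\Pi_h p)$, one splits
$$\bfgamma-\bfgamma_h=\bigl(\grad(r-I_h r)+(\curl p-\curlh(\Pi_h p))\bigr)+\bfdiv_h(\bfQ_h-\bfM_h).$$
Multiplying the first bracket by $(t+h)\lesssim 1$ and repeating the interpolation estimates used in the proof of Lemma \ref{lemma: error MiSP3_1} bounds it by $h(\|r\|_2+\|p\|_1)+ht\|p\|_2$. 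For the second summand, Lemma \ref{lemma: DiscreteDecomposition} writes $\bfdiv_h(\bfQ_h-\bfM_h)=\grad s+\curlh q$ with $s\in W_h$, $q\in P_h$, whence
$$(t+h)\|\bfdiv_h(\bfQ_h-\bfM_h)\|_0\le (t+h)\|\grad s\|_0+(t+h)\|\curlh q\|_0\lesssim |\|\bfQ_h-\bfM_h|\|_{h,1},$$
since the mesh-dependent norm was designed to carry precisely the $(h+t)$ weight on $\|\curlh q\|_0$. A triangle inequality with $|\|\bfM-\bfM_h|\|_{h,1}$ and Step~1 then close the estimate.
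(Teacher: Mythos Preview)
Your proof is correct and follows the paper's strategy for \eqref{eq: ConvergenceOrder1-MiSP3} and for the terms $\|\bfM-\bfM_h\|_0$, $\|\bfbeta-\bfbeta_h\|_1$, $\|w-w_h\|_1$. The only difference is the treatment of $(t+h)\|\bfgamma-\bfgamma_h\|_0$, where you take an unnecessary detour through an auxiliary $\bfQ_h$. The paper observes directly that $\bfgamma_h=\bfdiv_h\bfM_h$ admits the discrete Helmholtz decomposition $\grad r_h+\curlh p_h$, so $\bfgamma-\bfgamma_h=\grad(r-r_h)+\curlh(p-p_h)$, and hence
\[
(t+h)\|\bfgamma-\bfgamma_h\|_0\le(t+h)\|\grad(r-r_h)\|_0+(t+h)\|\curlh(p-p_h)\|_0\lesssim|\|\bfM-\bfM_h|\|_{h,1}
\]
falls out in one line from the very definition of the mesh-dependent norm. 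Your route via $\bfQ_h$ and a triangle inequality simply re-derives information already encoded in $|\|\cdot|\|_{h,1}$; it works, but the term is not as delicate as you suggest.
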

\begin{proof}
\eqref{eq: ConvergenceOrder1-MiSP3} follows from Theorem \ref{theorem: error-MiSP3}, Lemma \ref{lemma: error MiSP3_1} and Lemma \ref{lemma: error MiSP3_2} directly.
 For \eqref{eq: ConvergenceOrder2-MiSP3}, basing on the definition of mesh-dependent norms, we only need to estimate $(t+h)\|\bfgamma-\bfgamma_h\|_0$ and $\|w-w_{h}\|_{1}$.

In fact, from the decomposition $\bfgamma=\grad r+\curlh p$ and $\bfgamma_h=\grad r_h+\curlh p_h$, 
we have
 \begin{eqnarray*}
 (t+h)\|\bfgamma-\bfgamma_h\|_0
 &=&(t+h)\|\grad (r-r_h)+\curlh (p-p_h)\|_0\\
 &\lesssim& \|\grad (r-r_h)\|_0+(t+h)\|\curlh (p-p_h)\|_0\\
 &\lesssim & |\|\bfM-\bfM_h|\|_{h,1}.
\end{eqnarray*}

And the error estimate for $\|w-w_{h}\|_{1}$ can be obtained from the triangle inequality:
 \begin{eqnarray*}
 \|\grad w-\grad w_{h}\|_{0}
 &=&\|\grad w-\bfR_h\grad w+\bfR_h(\grad w-\grad w_{h})\|_0\\
 &\le& \|\grad w-\bfR_h\grad w\|_0+|\|(w-w_{h},\bfbeta-\bfbeta_{h})|\|_{h,2}\\
 &\lesssim &h\|w\|_2+|\|(w-w_{h},\bfbeta-\bfbeta_{h})|\|_{h,2}.
\end{eqnarray*}
Then an application of \eqref{eq: ConvergenceOrder1-MiSP3} implies \eqref{eq: ConvergenceOrder2-MiSP3}.
\end{proof}

\section{error analysis for MiSP4}
 This section is denoted to the error estimates for the MiSP4 element. The corresponding subspaces in this section
 are defined as in subsection 3.2. The error analysis for MiSP4 is similar as for MiSP3. And first we also give the following properties for the operator $\bfR_h$.
 \begin{lemma}\label{lemma: error_Rh_MiSP4}\cite[Lemma 2.1]{Duran-Hernandez-Nieto-Liberman-Rodriguez2004}
 $\bfR_h(\grad v_{h})=\grad v_{h}, \forall v_h\in W_h$.
 \end{lemma}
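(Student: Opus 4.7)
The plan is to verify that $\bfR_h$ acts as the identity on $\grad v_h$ by showing two facts: (a) $\grad v_h \in Z_h$, and (b) elements of $Z_h$ are uniquely determined by their edge tangential moments, so that $\bfR_h$ restricted to $Z_h$ is the identity.

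For step (a), I would work on the reference element. Fix $K \in \mathcal{T}_h$ and write $\hat{v} := v_h \circ F_K \in Q_1(\hat{K})$, so that $\hat{v}(\xi,\eta) = a_0 + a_1\xi + a_2\eta + a_3\xi\eta$ for some constants. Then
\begin{equation*}
\hat{\nabla}\hat{v} = \begin{pmatrix} a_1 + a_3\eta \\ a_2 + a_3\xi \end{pmatrix} \in \mathrm{span}\left\{\begin{pmatrix}1\\0\end{pmatrix},\begin{pmatrix}\eta\\0\end{pmatrix},\begin{pmatrix}0\\1\end{pmatrix},\begin{pmatrix}0\\\xi\end{pmatrix}\right\}.
\end{equation*}
By the chain rule (the covariant Piola transform for gradients),
\begin{equation*}
(\grad v_h) \circ F_K = DF_K^{-t}\, \hat{\nabla}\hat{v},
\end{equation*}
which shows that $\grad v_h|_K \circ F_K$ lies exactly in the local space prescribed by \eqref{Zh-MiSP4}. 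Global membership in $H_0(\rot,\Omega)$ follows at once: the tangential trace of $\grad v_h$ across any interior edge $e$ is $\partial_t v_h$, which is continuous because $v_h \in H^1_0(\Omega)\cap C(\bar\Omega)$; and on $\partial\Omega$ one has $v_h = 0$, hence $\grad v_h \cdot \bft = \partial_t v_h = 0$. Thus $\grad v_h \in Z_h$.

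For step (b), I would verify that on each element $K$ the local space $Z_h|_K$ has dimension $4$ and is unisolvent with respect to the four edge moments $\int_e \bfpsi \cdot \bft_e \, ds$. This is the standard unisolvence property of the lowest-order first-kind Nédélec element on the reference square (the basis $\{(1,0),(\eta,0),(0,1),(0,\xi)\}$ is dual to the four edge tangential moments on $\partial\hat{K}$), and it is transferred to $K$ via $DF_K^{-t}$ using $\int_e DF_K^{-t}\hat{\bfpsi}\cdot\bft_e\,ds = \int_{\hat e}\hat{\bfpsi}\cdot\hat{\bft}_{\hat e}\,d\hat s$ (a standard identity for the covariant transform). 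Since $\bfR_h\bfpsi$ and $\bfpsi$ agree on all edge moments by the defining property of $\bfR_h$, unisolvence forces $\bfR_h\bfpsi = \bfpsi$ for every $\bfpsi \in Z_h$. Applying this to $\bfpsi = \grad v_h$ from step (a) gives $\bfR_h(\grad v_h) = \grad v_h$.

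The only subtle point is the covariant identity tying reference edge moments to physical edge moments on a genuinely curvilinear (nonaffine) bilinear quadrilateral; all other ingredients are computational. For this reason the authors simply cite \cite{Duran-Hernandez-Nieto-Liberman-Rodriguez2004}, where the quadrilateral Nédélec interpolant on isoparametric elements is analyzed in detail.
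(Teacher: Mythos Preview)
Your argument is correct. The paper does not supply its own proof for this lemma in the MiSP4 section; it simply cites \cite[Lemma 2.1]{Duran-Hernandez-Nieto-Liberman-Rodriguez2004}. Your two-step structure (show $\grad W_h \subset Z_h$, then show $\bfR_h$ is the identity on $Z_h$ via unisolvence of the edge moments) is exactly the route the paper sketches in the triangular case (Lemma~\ref{lemma: error_Rh_MiSP3}, where it writes ``It is easy to verify $\grad W_h\subset Z_h$ and $\bfR_h\bfpsi_h=\bfpsi_h, \forall \bfpsi_h\in Z_h$''), and it is also the argument in the cited reference, so there is no divergence to discuss.
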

 \begin{lemma} \cite[Theorem \textrm{III} 3.4]{Girault-Raviart}
 $\|\bfeta-\bfR_h\bfeta\|_{0}\lesssim h\|\bfeta\|_{1}, \forall \bfeta\in H^{1}(\Omega)^{2}\bigcap
 H_{0}(rot,\Omega)$.
 \end{lemma}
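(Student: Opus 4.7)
The plan is to reduce the claim to the reference square $\hat{K}$ by a scaling argument and apply the Bramble--Hilbert lemma, with care taken for the non-affine isoparametric mapping $F_K$. First I would localize: summing squares over $K\in\mathcal{T}_h$, it suffices to prove $\|\bfeta-\bfR_h\bfeta\|_{0,K}\lesssim h_K\|\bfeta\|_{1,K}$ for each $K$. Next, I would pull $\bfR_h$ back via the covariant (tangential) transformation $\hat{\bfeta}(\xi,\eta):=DF_K(\xi,\eta)^t\,(\bfeta\circ F_K)(\xi,\eta)$. This transformation preserves edge tangential integrals, and the target space in \eqref{Zh-MiSP4} was defined precisely by inserting the contravariant factor $DF_K^{-t}$, so $\bfR_h$ on $K$ corresponds to a single fixed reference interpolant $\hat{\bfR}$ onto $\mathrm{span}\{(1,0)^T,(\eta,0)^T,(0,1)^T,(0,\xi)^T\}$ determined by the four edge tangential integrals on $\hat{K}$.

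On $\hat{K}$, the reference operator $\hat{\bfR}$ reproduces constant vector fields (they lie in the target space and are uniquely fixed by the edge integrals), so the map $\hat{\bfeta}\mapsto\hat{\bfeta}-\hat{\bfR}\hat{\bfeta}$ is a bounded linear operator on $H^1(\hat{K})^2\cap H_0(\rot,\hat{K})$ vanishing on constants. The Bramble--Hilbert lemma then yields $\|\hat{\bfeta}-\hat{\bfR}\hat{\bfeta}\|_{0,\hat{K}}\lesssim|\hat{\bfeta}|_{1,\hat{K}}$. To transfer this back to $K$, I would invoke the shape-regularity assumption \eqref{partition condition}: together with the explicit formulas for $DF_K$ and $J_K=J_0+J_1\xi+J_2\eta$, it gives the uniform bounds $\|DF_K\|_{L^\infty(\hat{K})}\lesssim h_K$, $\|DF_K^{-1}\|_{L^\infty(\hat{K})}\lesssim h_K^{-1}$, and $J_K\thickapprox h_K^2$. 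Inserting these into the $L^2$ and $H^1$ change-of-variable formulas converts the reference estimate into the desired $O(h_K)$ bound on $K$.

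The main obstacle is the non-affine nature of $F_K$: its Jacobian varies over $\hat{K}$ and the covariant pullback couples $\bfeta$ with the entries of $DF_K$, so the clean affine-equivalence scaling does not apply directly. The precise quantitative book-keeping under \eqref{partition condition} is exactly what Theorem III.3.4 of Girault--Raviart \cite{Girault-Raviart} carries out, which is why we invoke that result rather than reproducing the argument in full.
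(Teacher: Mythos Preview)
The paper gives no proof of this lemma at all; it is stated with a bare citation to \cite[Theorem~III.3.4]{Girault-Raviart}, and your proposal likewise ends by invoking that same reference for the delicate non-affine scaling. In that sense your approach and the paper's coincide: both rest on the cited result.

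Your sketch of what lies behind the citation is essentially correct. One remark on the point you flag as the ``main obstacle'': the naive route---Bramble--Hilbert on $\hat K$ followed by change of variables---indeed leaves an unwanted $\|\bfeta\|_{0,K}$ term, because differentiating $\hat\bfeta=DF_K^t(\bfeta\circ F_K)$ produces a contribution from $\partial_{\hat x}(DF_K^t)$ that scales only like $h_K$, not $h_K^2$. A clean way around this, which you could mention instead of deferring entirely to the reference, is to note that \emph{physical} constants lie in $Z_h|_K$ even for non-affine $F_K$: writing out $DF_K^t\bfc$ with the formulas for $DF_K$ in the paper shows it has exactly the form $(c_1+c_2\eta,\,c_3+c_4\xi)^T$. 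Hence $\bfR_h$ reproduces constants on each $K$, and one may subtract the mean of $\bfeta$ on $K$, combine the local stability bound $\|\bfR_h\bfphi\|_{0,K}\lesssim\|\bfphi\|_{0,K}+h_K|\bfphi|_{1,K}$ (which follows from your trace and scaling estimates) with the Poincar\'e inequality, and obtain $\|\bfeta-\bfR_h\bfeta\|_{0,K}\lesssim h_K|\bfeta|_{1,K}$ directly.
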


 We introduce two mesh-dependent norms for the finite dimensional
spaces:

For any $\bfQ\in (H^1(\Omega))^{2\times 2}_{sym}\bigcup\mathbb{M}_h$, $v\in (H^2(\Omega)\bigcap H_0^1(\Omega))\bigcup W_h$, $\bfzeta\in H_0^1(\Omega)^2\bigcup \Theta_h$, define
 \begin{equation}
 |\|\bfQ|\|_{h,1}:=\|\bfQ\|_{0}+(t+h)\|\bfdiv_h\bfQ\|_{0},
 \end{equation}
 \begin{equation}
 |\|(v,\bfzeta)|\|_{h,2}:=\|\bfepsilon(\bfzeta)\|_{0}+(t+h)^{-1}\|\bfR_h(\grad
 v-\bfzeta)\|_{0}.
 \end{equation}

 With the definition of mesh-dependent norms, it is easy to check the continuity results in Lemma \ref{lemma: Continuity-MiSP4}.
 While the corresponding coercivity results are deduced in Lemma \ref{lemma: Coercivity1-MiSP4}-\ref{lemma: Coercivity2-MiSP4}.
 Lemma \ref{Lemma: PrepareWork} is a preparation for Lemma \ref{lemma: Coercivity2-MiSP4}.
\begin{lemma}\label{lemma: Continuity-MiSP4}
For any $\bfM,\bfQ\in (H^1(\Omega))^{2\times 2}_{sym}\bigcup\mathbb{M}_h$, $v\in
(H^2(\Omega)\bigcap H_0^1(\Omega))\bigcup W_h$, $\bfzeta\in H_0^1(\Omega)^2\bigcup \Theta_h$, it holds uniformly the continuity conditions
\begin{equation}
a(\bfM,\bfdiv_h\bfM;\bfQ,\bfdiv_h\bfQ)\lesssim |\|\bfM|\|_{h,1}|\|\bfQ|\|_{h,1},
\end{equation}
 \begin{equation}
 \tilde{b}(\bfQ,\bfdiv_h\bfQ;v,\bfzeta) \lesssim  |\|\bfQ|\|_{h,1}|\|(v,\bfzeta)|\|_{h,2}.
 \end{equation}
\end{lemma}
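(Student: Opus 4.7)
The plan is to read off both inequalities as immediate consequences of the Cauchy--Schwarz inequality combined with the definitions of the mesh-dependent norms $|\|\cdot|\|_{h,1}$ and $|\|\cdot|\|_{h,2}$. Nothing special about MiSP4 (the mapping $F_K$, the space $Z_h$ in \eqref{Zh-MiSP4}, etc.) is actually needed at this stage; the continuity estimates are formal bounds once the norms are fixed. The author's remark ``it is easy to check'' reflects exactly this.

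For the bilinear form $a$, I would first note that $\matrixD^{-1}$ is a bounded symmetric positive-definite tensor with constants depending only on $E$ and $\nu$, so termwise Cauchy--Schwarz gives
\begin{equation*}
a(\bfM,\bfdiv_h\bfM;\bfQ,\bfdiv_h\bfQ)\lesssim \|\bfM\|_0\|\bfQ\|_0+\tfrac{t^2}{\lambda}\|\bfdiv_h\bfM\|_0\|\bfdiv_h\bfQ\|_0.
\end{equation*}
Since $t\le t+h$, I can bound $t\|\bfdiv_h\bfM\|_0\le(t+h)\|\bfdiv_h\bfM\|_0\le|\|\bfM|\|_{h,1}$ and similarly for $\bfQ$, and the $L^2$ factors are obviously controlled by the same norm. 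Collecting the two products yields $a(\bfM,\bfdiv_h\bfM;\bfQ,\bfdiv_h\bfQ)\lesssim |\|\bfM|\|_{h,1}|\|\bfQ|\|_{h,1}$.

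For $\tilde{b}$, I would split
\begin{equation*}
\tilde{b}(\bfQ,\bfdiv_h\bfQ;v,\bfzeta)=\int_\Omega \bfQ\!:\!\bfepsilon(\bfzeta)\,d\bfx-\sum_{K\in\mathcal{T}_h}\int_K \bfdiv_h\bfQ\cdot\bfR_h(\grad v-\bfzeta)\,d\bfx,
\end{equation*}
and apply Cauchy--Schwarz on each term. The first term gives $\|\bfQ\|_0\|\bfepsilon(\bfzeta)\|_0\le|\|\bfQ|\|_{h,1}|\|(v,\bfzeta)|\|_{h,2}$ directly. For the second term I use the scaling trick built into the norms: multiply and divide by $(t+h)$ to obtain
\begin{equation*}
\|\bfdiv_h\bfQ\|_0\,\|\bfR_h(\grad v-\bfzeta)\|_0=\bigl[(t+h)\|\bfdiv_h\bfQ\|_0\bigr]\bigl[(t+h)^{-1}\|\bfR_h(\grad v-\bfzeta)\|_0\bigr],
\end{equation*}
whose two factors are bounded by $|\|\bfQ|\|_{h,1}$ and $|\|(v,\bfzeta)|\|_{h,2}$ respectively. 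Adding the two estimates gives the desired bound.

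There is no genuine obstacle: the norms were precisely designed to absorb the weight $t+h$ on $\bfdiv_h\bfQ$ and its reciprocal on $\bfR_h(\grad v-\bfzeta)$, and the factor $t^2/\lambda$ in $a$ is absorbed by the same $(t+h)$ weight. I would simply write out the two Cauchy--Schwarz steps and invoke the definitions, with no appeal to interpolation, inverse inequalities, or the specific geometry of quadrilateral elements.
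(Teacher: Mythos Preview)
Your proposal is correct and matches the paper's approach: the paper does not give an explicit proof but simply states that ``it is easy to check the continuity results'' from the definitions of the mesh-dependent norms, which is exactly the Cauchy--Schwarz argument you have written out.
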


\begin{lemma}\label{lemma: Coercivity1-MiSP4}
 It holds uniformly the discrete coercivity condition
 \begin{equation}
 a(\bfQ_{h},\bfdiv_h\bfQ_h;\bfQ_{h},\bfdiv_h\bfQ_h)\gtrsim
 |\|\bfQ_h|\|_{h,1}\quad \mbox{~~for all }
 \bfQ_h\in \mathbb{M}_h.
 \end{equation}
 \end{lemma}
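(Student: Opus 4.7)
The plan is to reduce the lemma to two standard ingredients: the uniform ellipticity of the compliance tensor $\matrixD^{-1}$ (which gives the $\|\bfQ_h\|_0^2$ and $t^2\|\bfdiv_h\bfQ_h\|_0^2$ contributions from $a$), and an inverse inequality of the form $h\|\bfdiv_h\bfQ_h\|_0\lesssim\|\bfQ_h\|_0$ on $\mathbb{M}_h$ (which lets us upgrade the $t^2$ factor to $(t+h)^2$). Unlike the MiSP3 case, no Helmholtz-decomposition detour is required because the mesh-dependent norm on $\mathbb{M}_h$ for MiSP4 does not split $\bfdiv_h\bfQ_h$ into $\grad s$ and $\curlh q$ pieces.

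First I would write
\begin{equation*}
a(\bfQ_h,\bfdiv_h\bfQ_h;\bfQ_h,\bfdiv_h\bfQ_h)
= \int_\Omega \bfQ_h:\matrixD^{-1}\bfQ_h\, d\bfx + \frac{t^2}{\lambda}\int_\Omega |\bfdiv_h\bfQ_h|^2\, d\bfx,
\end{equation*}
and invoke the pointwise spectral bounds on $\matrixD^{-1}$ (depending only on $E$ and $\nu$, not on $t$ or $h$) to conclude
\begin{equation*}
a(\bfQ_h,\bfdiv_h\bfQ_h;\bfQ_h,\bfdiv_h\bfQ_h)
\gtrsim \|\bfQ_h\|_0^2 + t^2\|\bfdiv_h\bfQ_h\|_0^2.
\end{equation*}
Next I would establish the inverse inequality $\|\bfdiv_h\bfQ_h\|_{0,K}\lesssim h_K^{-1}\|\bfQ_h\|_{0,K}$ on each $K\in\mathcal{T}_h$. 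Because $\bfQ_h|_K\circ F_K\in Q_1(\hat K)^{2\times 2}_{sym}$ is a fixed finite-dimensional polynomial space on $\hat K$, this is a standard scaling argument; one pulls back to $\hat K$, uses the equivalence of norms on the finite-dimensional space $Q_1(\hat K)^{2\times 2}_{sym}$, and scales back. The shape-regularity hypothesis \eqref{partition condition} and the resulting uniform bounds on $DF_K$, $DF_K^{-1}$, and $J_K$ ensure that the constant is independent of both $h$ and the particular element shape.

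Finally I would combine the two estimates by the splitting
\begin{equation*}
\|\bfQ_h\|_0^2 + t^2\|\bfdiv_h\bfQ_h\|_0^2
= \tfrac12\|\bfQ_h\|_0^2 + \tfrac12\|\bfQ_h\|_0^2 + t^2\|\bfdiv_h\bfQ_h\|_0^2
\gtrsim \|\bfQ_h\|_0^2 + h^2\|\bfdiv_h\bfQ_h\|_0^2 + t^2\|\bfdiv_h\bfQ_h\|_0^2,
\end{equation*}
and then use the elementary inequality $a^2+b^2\gtrsim (a+b)^2$ for $a,b\ge 0$ (applied with $a=h\|\bfdiv_h\bfQ_h\|_0$ and $b=t\|\bfdiv_h\bfQ_h\|_0$) to obtain $(h+t)^2\|\bfdiv_h\bfQ_h\|_0^2$. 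This yields $\|\bfQ_h\|_0^2+(h+t)^2\|\bfdiv_h\bfQ_h\|_0^2\sim |\|\bfQ_h|\|_{h,1}^2$, which is the claim.

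The only delicate point is the inverse inequality on genuinely non-affine isoparametric quadrilaterals: one has to verify that the norm equivalence on $\hat K$ transfers back to $K$ with a shape-independent constant. Under the assumption \eqref{partition condition} this is classical (Ciarlet–Raviart), so I do not expect the argument to be longer than a brief remark citing shape-regularity.
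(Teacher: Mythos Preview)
Your proposal is correct and follows essentially the same route as the paper: the paper's proof is the one-line remark that the coercivity ``immediately follows from the inverse inequality $\|\bfdiv_{h}\bfQ_{h}\|_{0}\le|\bfQ_{h}|_{1}\lesssim h^{-1}\|\bfQ_{h}\|_{0}$,'' and you have simply unpacked this (ellipticity of $\matrixD^{-1}$, then inverse inequality, then combining $t^2$ and $h^2$ into $(t+h)^2$). Your care about the inverse inequality on non-affine isoparametric quadrilaterals is appropriate but not elaborated in the paper.
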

\begin{proof}
The proof immediately follows from the inverse inequality
$\|\bfdiv_{h}\bfQ_{h}\|_{0}\,\leq|\bfQ_{h}|_{1}\lesssim
h^{-1}\|\bfQ_{h}\|_{0}.$
\end{proof}

\begin{lemma}\label{Lemma: PrepareWork}
The following two conclusions hold:\\
 (1) For any given $\bfzeta_h\in\Theta_h$, there exists $\bfQ_h^1\in\mathbb{M}_h$, such that
\begin{equation}\label{inf-sup1}
(\bfQ_h^1 ,\bfepsilon(\bfzeta_{h}))=\|\bfQ_h^1\|_0^2\thickapprox\|\bfepsilon(\bfzeta_{h})\|_0^2,
\mbox{~~and~}\bfdiv_h\bfQ_h^1=0;
\end{equation}
(2)For any given $v_h\in W_h$, $\bfzeta_h\in\Theta_h$, there exists
$\bfQ_h^2\in\mathbb{M}_h$,
such that
\begin{equation}\label{inf-sup2}
(\bfdiv_h \bfQ_h^2, \bfR_h(\grad
v_{h}-\bfzeta_{h}))=-(t^2+h^2)\|\bfdiv_h \bfQ_h^2  \|_{0}^2\thickapprox-\frac1{t^2+h^2}\|\bfR_h(\grad
v_{h}-\bfzeta_{h})\|_{0}^2,
\end{equation}
and
\begin{equation}\label{equivalence}
\|\bfdiv_h \bfQ_h^2  \|_{0}\thickapprox
h^{-1}\|\bfQ_h^2  \|_{0}.
\end{equation}
\end{lemma}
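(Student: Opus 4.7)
The plan is to establish both conclusions by element-by-element constructions that exploit the structure of the local spaces $\mathbb{M}_h|_K$, $\Theta_h|_K$ and $Z_h|_K$ through the isoparametric map $F_K$.

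For conclusion~(1), I would take $\bfQ_h^1\in\mathbb{M}_h$ to be the $L^2(\Omega)$-orthogonal projection of $\bfepsilon(\bfzeta_h)$ onto the element-wise divergence-free subspace $V_h:=\{\bfQ\in\mathbb{M}_h:\bfdiv_h\bfQ=0\}$. Then $\bfdiv_h\bfQ_h^1=0$ by construction, and the identity $(\bfQ_h^1,\bfepsilon(\bfzeta_h))=\|\bfQ_h^1\|_0^2$ is immediate from $L^2$-orthogonality, since $\bfQ_h^1\in V_h$ while $\bfepsilon(\bfzeta_h)-\bfQ_h^1$ is orthogonal to $V_h$. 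The nontrivial content is the equivalence $\|\bfQ_h^1\|_0\thickapprox\|\bfepsilon(\bfzeta_h)\|_0$: the upper direction is automatic, and the lower direction reduces, by pulling back to the reference element $\hat K$, to exhibiting on $\hat K$ a divergence-free matrix $\hat{\bfQ}\in\mathbb{M}_h|_{\hat K}$ with $(\hat{\bfQ},\bfepsilon(\hat{\bfzeta}))_{\hat K}\gtrsim\|\bfepsilon(\hat{\bfzeta})\|_{0,\hat K}\|\hat{\bfQ}\|_{0,\hat K}$ uniformly for $\hat{\bfzeta}\in\Theta_h|_{\hat K}$ modulo rigid motions. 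A Piola-type transform preserving the divergence constraint, together with the shape-regularity assumption~\eqref{partition condition} controlling the Jacobian $J_K=J_0+J_1\xi+J_2\eta$, then transfers the reference estimate back to $K$ with uniform constants.

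For conclusion~(2), write $\bfpsi:=\bfR_h(\grad v_h-\bfzeta_h)$. The identity $(\bfdiv_h\bfQ_h^2,\bfpsi)=-(t^2+h^2)\|\bfdiv_h\bfQ_h^2\|_0^2$ together with its companion $\|\bfdiv_h\bfQ_h^2\|_0\thickapprox\|\bfpsi\|_0/(t^2+h^2)$ force $\bfdiv_h\bfQ_h^2$ to realize, up to the scalar $-1/(t^2+h^2)$, an $L^2$-projection of $\bfpsi$ onto $\bfdiv_h\mathbb{M}_h$ that retains a uniform fraction of $\|\bfpsi\|_0$. I therefore plan to choose $\bfQ_h^2$ element by element so that $\bfdiv\bfQ_h^2|_K$ equals $-\frac{1}{t^2+h^2}$ times the $L^2(K)$-projection of $\bfpsi|_K$ onto $\bfdiv(\mathbb{M}_h|_K)$; the inner-product identity is then automatic by orthogonality, and the scaling $\|\bfQ_h^2\|_0\thickapprox h\|\bfdiv\bfQ_h^2\|_0$ reflects the standard picture that entries of $\bfQ_h^2$ are $Q_1$ on the reference element while divergences in physical coordinates gain an $h^{-1}$ factor under the pullback.

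I expect the main obstacle in both parts to be the reference-element inf-sup-type statements: on $\hat K$, the divergence-free subspace of $\mathbb{M}_h|_{\hat K}$ must be large enough to $L^2$-represent $\bfepsilon(\Theta_h|_{\hat K})$ uniformly (for~(1)), and $\bfdiv(\mathbb{M}_h|_{\hat K})$ must be large enough to $L^2$-represent $Z_h|_{\hat K}$ uniformly (for~(2)). The local dimension counts ($\dim\mathbb{M}_h|_K=12$, $\dim\bfepsilon(\Theta_h|_K)\le 5$, $\dim Z_h|_K=4$) make these inf-sups plausible on a single element, but extending them uniformly over the family of shape-regular quadrilaterals is delicate because of the non-constant Jacobian $J_K$ and the way the Piola transform interacts with the entrywise $Q_1$ structure of $\mathbb{M}_h$ and the $DF_K^{-t}$-driven structure of $Z_h$.
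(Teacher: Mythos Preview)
Your strategy is sound and would succeed, but it differs from the paper's approach in execution.

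For part~(1), the paper simply takes $\bfQ_h^1$ to be the 5-parameter Pian--Sumihara stress and invokes the inf-sup result already established in \cite[Lemma~4.4]{C;Xie;Yu;Zhou2011}. Your projection onto the full divergence-free subspace of $\mathbb{M}_h$ is cleaner and gives $(\bfQ_h^1,\bfepsilon(\bfzeta_h))=\|\bfQ_h^1\|_0^2$ for free, but the lower bound $\|\bfQ_h^1\|_0\gtrsim\|\bfepsilon(\bfzeta_h)\|_0$ must then be proved from scratch. As you correctly flag, the ``Piola-type transform'' you sketch does not quite fit: $\mathbb{M}_h$ is defined by componentwise $Q_1$ pullback, not a Piola map, so the physical divergence-free subspace varies with $K$ through the non-constant Jacobian and is not a fixed reference subspace. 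In practice the missing lower bound is exactly the PS inf-sup, so you would end up re-proving or citing the same lemma the paper cites.

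For part~(2), the paper uses no projection. It writes $\bfR_h(\grad v_h-\bfzeta_h)|_K$ explicitly via the definition of $Z_h$ in terms of four coefficients $c_1,\dots,c_4$, constructs a concrete $\bfQ_h|_K$ with entries like $c_1\xi+c_3\eta+c_2\xi\eta$, and computes $\|\bfR_h(\grad v_h-\bfzeta_h)\|_{0,K}^2$, $\|\bfdiv_h\bfQ_h\|_{0,K}^2$ and their inner product by brute force in $(\xi,\eta)$, observing that all three are uniformly equivalent to the same quadratic form in the $c_i$ divided by $J_K$ at nearby points; a final rescaling by $-1/(t^2+h^2)$ gives $\bfQ_h^2$. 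Your projection idea would also work --- the local inf-sup ``$\bfdiv(\mathbb{M}_h|_K)$ captures $Z_h|_K$ uniformly'' is precisely what that computation verifies --- but the paper's direct coordinate calculation sidesteps the transform issue entirely by working in reference coordinates throughout and absorbing all $K$-dependence into ratios of $J_K$ values bounded under shape regularity \eqref{partition condition}.
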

\begin{proof}
The proof is similar to that in \cite{C;Xie;Yu;Zhou2011}.

(1) Given $\bfzeta_h\in\Theta_h$, choose $\bfQ_h^1 $ as the 5-parameter PS element in \cite{C;Xie;Yu;Zhou2011}. The proof for \eqref{inf-sup1} can be found in \cite[Lemma 4.4]{C;Xie;Yu;Zhou2011}.

(2) Given $v_h\in W_h$, $\bfzeta_h\in\Theta_h$, for any $K\in\mathcal{T}_h$, $\bfR_h(\grad v_{h}-\bfzeta_{h})|_K$ can be expressed as
 \begin{eqnarray*}
 &&\bfR_h(\grad v_{h}-\bfzeta_{h})|_K\\
 &=&\frac1{J_K}
 \left(\begin{array}{cc}
 b_2+b_{12}\xi &-(b_1+b_{12}\eta)\\
 -(a_2+a_{12}\xi) &a_1+a_{12}\eta
 \end{array}\right)
 \left(\begin{array}{cccc}
 1 &\eta &0 &0\\
 0 &0 &1 &\xi
 \end{array}\right)
  \left(\begin{array}{c}
 c_1\\
 c_2\\
 c_3\\
 c_4
 \end{array}\right),  \mbox{here} \left(\begin{array}{c}
 c_1\\
 c_2\\
 c_3\\
 c_4
 \end{array}\right) \mbox{depends on~} v_h,\bfzeta_h.
 \end{eqnarray*}
Some calculations show
 \begin{eqnarray*}
 &&\|\bfR_h(\grad v_{h}-\bfzeta_{h})\|_{0,K}^2\\
 &=&\frac{4}{J_K(\xi_1,\eta_1)}
 \left[(b_2c_1-b_1c_3)^2+\frac13(b_{2}c_2-b_{12}c_3)^2+\frac13(b_{12}c_1-b_1c_4)^2+\frac19(b_{12}c_2-b_{12}c_4)^2\right.\\
 &&\hspace{0.7in}  \left.+(a_2c_1-a_1c_3)^2+\frac13(a_{2}c_2-a_{12}c_3)^2+\frac13(a_{12}c_1-a_1c_4)^2+\frac19(a_{12}c_2-a_{12}c_4)^2\right]\\
 &=&\frac{C_1}{J_K(\xi_1,\eta_1)}\left[(b_2c_1-b_1c_3)^2+(b_{2}c_2-b_{12}c_3)^2+(a_2c_1-a_1c_3)^2+(a_{12}c_1-a_{1}c_4)^2\right].
 \end{eqnarray*}

 Take $\bfQ_{h}|_K=\left(\begin{array}{c}
 c_1\xi+c_3\eta+c_2\xi\eta\\
 c_1\xi+c_3\eta+c_2\xi\eta\\
 0\end{array}\right)$,
 then we have
 $$\bfdiv_{h}\bfQ_{h}|_K
 =\frac{1}{J_K}\left(\begin{array}{c}
 (b_2c_1-b_1c_3)+(b_{12}c_1-b_1c_2)\xi+(b_{2}c_2-b_{12}c_3)\eta\\
 -(a_2c_1-a_1c_3)-(a_{12}c_1-a_1c_4)\xi+(a_{2}c_4-a_{12}c_3)\eta
 \end{array}\right)$$
 and
  \begin{eqnarray*}
 \|\bfdiv_{h}\bfQ_{h}\|_{0,K}^2
 &=&\frac{4}{J_K(\xi_2,\eta_2)}
 \left[(b_2c_1-b_1c_3)^2+\frac13(b_{2}c_2-b_{12}c_3)^2+\frac13(b_{12}c_1-b_1c_4)^2\right.\\
 &&\hspace{0.7in}  \left.+(a_2c_1-a_1c_3)^2+\frac13(a_{2}c_4-a_{12}c_3)^2+\frac13(a_{12}c_1-a_1c_4)^2\right]\\
 &=&\frac{C_2}{J_K(\xi_2,\eta_2)}\left[(b_2c_1-b_1c_3)^2+(b_{2}c_2-b_{12}c_3)^2+(a_2c_1-a_1c_3)^2+(a_{12}c_1-a_{1}c_4)^2\right].
 \end{eqnarray*}
On the other hand, it holds
 \begin{eqnarray*}
 &&\int_{K}\bfdiv_{h}\bfQ_{h}\cdot\bfR_h(\grad v_{h}-\bfzeta_{h})dxdy\\
 &=&\frac{4}{J_K(\xi_3,\eta_3)}
 \left[(b_2c_1-b_1c_3)^2+\frac13(b_{2}c_2-b_{12}c_3)^2+\frac13(b_{12}c_1-b_1c_4)^2\right.\\
 &&\hspace{0.7in}  \left.+(a_2c_1-a_1c_3)^2+\frac13(a_{2}c_4-a_{12}c_3)^2+\frac13(a_{12}c_1-a_1c_4)^2\right]\\
 &=&\frac{C_3}{J_K(\xi_3,\eta_3)}\left[(b_2c_1-b_1c_3)^2+(b_{2}c_2-b_{12}c_3)^2+(a_2c_1-a_1c_3)^2+(a_{12}c_1-a_{1}c_4)^2\right].
 \end{eqnarray*}

Let $C_0=-\frac{C_3}{C_2}\frac{J_K(\xi_2,\eta_2)}{J_K(\xi_3,\eta_3)}\frac1{t^2+h^2}$, and choose $\bfQ_h^2|_K=C_0\bfQ_h|_K$, i.e. $\bfdiv_{h}\bfQ_h^2 |_K =C_0\bfdiv_{h}\bfQ_{h}|_K$, then a summation over all elements in $\mathcal{T}_h$ completes the proof for \eqref{inf-sup2}. The result \eqref{equivalence} follows from
 the construction of $\bfQ_h^2$.
\end{proof}

\begin{lemma}\label{lemma: Coercivity2-MiSP4}
It holds the inf-sup condition
\begin{equation}\label{inf-sup}
\sup_{\bfQ_{h}\in\mathbb{M}_{h}}
\frac{\tilde{b}(\bfQ_{h},\bfdiv_h\bfQ_h;v_{h},\bfzeta_{h})}{|\|\bfQ_{h}|\|_{h,1}}\gtrsim|\|(v_{h},\bfzeta_{h})|\|_{h,2},
\mbox{ for all } (v_{h},\bfzeta_{h})\in W_{h}\times \Theta_{h}.
\end{equation}
\end{lemma}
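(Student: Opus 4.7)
The plan is to verify the inf-sup condition by constructing, for each given pair $(v_h,\bfzeta_h)\in W_h\times\Theta_h$, an explicit test tensor $\bfQ_h\in\mathbb{M}_h$ assembled as a linear combination of the two auxiliary tensors $\bfQ_h^1$ and $\bfQ_h^2$ supplied by Lemma~\ref{Lemma: PrepareWork}. The guiding idea is that $\bfQ_h^1$ controls the $\|\bfepsilon(\bfzeta_h)\|_0$-part of $|\|(v_h,\bfzeta_h)|\|_{h,2}$ via \eqref{inf-sup1}, while $\bfQ_h^2$ controls the $(t+h)^{-1}\|\bfR_h(\grad v_h-\bfzeta_h)\|_0$-part via \eqref{inf-sup2}.

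First I would set $\bfQ_h=\alpha\,\bfQ_h^1+\bfQ_h^2$, with $\alpha>0$ a constant to be fixed later. Since $\bfdiv_h\bfQ_h^1=0$ one has $\bfdiv_h\bfQ_h=\bfdiv_h\bfQ_h^2$, so that
\[
\tilde{b}(\bfQ_h,\bfdiv_h\bfQ_h;v_h,\bfzeta_h)=\alpha\|\bfQ_h^1\|_0^2+(\bfQ_h^2,\bfepsilon(\bfzeta_h))+(t^2+h^2)\|\bfdiv_h\bfQ_h^2\|_0^2,
\]
where by \eqref{inf-sup1} and \eqref{inf-sup2} the first and third summands are equivalent to $\|\bfepsilon(\bfzeta_h)\|_0^2$ and $(t+h)^{-2}\|\bfR_h(\grad v_h-\bfzeta_h)\|_0^2$ respectively. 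The cross term $(\bfQ_h^2,\bfepsilon(\bfzeta_h))$ is the only dangerous contribution; controlling it is the main obstacle of the proof.

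To absorb that cross term I would combine \eqref{equivalence} with Cauchy--Schwarz and a weighted Young inequality. Since $\|\bfQ_h^2\|_0\thickapprox h\|\bfdiv_h\bfQ_h^2\|_0\thickapprox \frac{h}{t^2+h^2}\|\bfR_h(\grad v_h-\bfzeta_h)\|_0$, one obtains for any $\delta>0$
\[
|(\bfQ_h^2,\bfepsilon(\bfzeta_h))|\le\frac{\delta}{2}\|\bfepsilon(\bfzeta_h)\|_0^2+\frac{C}{2\delta}\cdot\frac{h^2}{(t^2+h^2)^2}\|\bfR_h(\grad v_h-\bfzeta_h)\|_0^2.
\]
Because $\tfrac{h^2}{t^2+h^2}\le 1$, picking $\delta$ small and then $\alpha$ correspondingly large makes the two positive quadratic terms dominate, which yields
\[
\tilde{b}(\bfQ_h,\bfdiv_h\bfQ_h;v_h,\bfzeta_h)\gtrsim \|\bfepsilon(\bfzeta_h)\|_0^2+(t+h)^{-2}\|\bfR_h(\grad v_h-\bfzeta_h)\|_0^2\thickapprox|\|(v_h,\bfzeta_h)|\|_{h,2}^2.
\]

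To close the inf-sup argument I still need the upper bound $|\|\bfQ_h|\|_{h,1}\lesssim|\|(v_h,\bfzeta_h)|\|_{h,2}$. The triangle inequality gives
\[
|\|\bfQ_h|\|_{h,1}\le \alpha\|\bfQ_h^1\|_0+\|\bfQ_h^2\|_0+(t+h)\|\bfdiv_h\bfQ_h^2\|_0,
\]
and applying \eqref{inf-sup1}, \eqref{equivalence}, \eqref{inf-sup2} together with the elementary bound $\tfrac{t+h}{t^2+h^2}\lesssim\tfrac{1}{t+h}$ shows that each summand is controlled by a piece of $|\|(v_h,\bfzeta_h)|\|_{h,2}$. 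Dividing the lower bound on $\tilde{b}$ by $|\|\bfQ_h|\|_{h,1}$ and taking the supremum over $\bfQ_h\in\mathbb{M}_h$ then establishes~\eqref{inf-sup}.
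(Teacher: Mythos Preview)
Your construction is essentially the same as the paper's: one assembles $\bfQ_h$ from the two pieces supplied by Lemma~\ref{Lemma: PrepareWork}, uses a weighted Young inequality to absorb the cross term $(\bfQ_h^2,\bfepsilon(\bfzeta_h))$, and then bounds $|\|\bfQ_h|\|_{h,1}$ from above by $|\|(v_h,\bfzeta_h)|\|_{h,2}$. The paper puts the free scaling on $\bfQ_h^2$ (the constant $C_2$) rather than on $\bfQ_h^1$ (your $\alpha$), but this is cosmetically the same trick.

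There is, however, one slip in your parameter choice. With your Young splitting
\[
|(\bfQ_h^2,\bfepsilon(\bfzeta_h))|\le\frac{\delta}{2}\|\bfepsilon(\bfzeta_h)\|_0^2+\frac{C}{2\delta}\,\frac{h^2}{(t^2+h^2)^2}\|\bfR_h(\grad v_h-\bfzeta_h)\|_0^2,
\]
the second negative term has to be absorbed into $(t^2+h^2)\|\bfdiv_h\bfQ_h^2\|_0^2\thickapprox(t^2+h^2)^{-1}\|\bfR_h(\grad v_h-\bfzeta_h)\|_0^2$, which forces $\tfrac{C}{2\delta}\cdot\tfrac{h^2}{t^2+h^2}$ to be strictly less than a fixed constant; since $\tfrac{h^2}{t^2+h^2}\le 1$ but can equal~$1$, this requires $\delta$ to be \emph{large enough}, not small. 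The correct order is: first fix $\delta$ at a sufficiently large constant (depending only on the equivalence constants in \eqref{inf-sup2}--\eqref{equivalence}), then take $\alpha$ large relative to that fixed $\delta$ to absorb $\tfrac{\delta}{2}\|\bfepsilon(\bfzeta_h)\|_0^2$ into $\alpha\|\bfQ_h^1\|_0^2$. With this correction your argument goes through and matches the paper's.
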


\begin{proof}
 For $\bfzeta_h\in\Theta_h$, from \eqref{inf-sup1} there exists a positive constant
 $C_1$ and $\bfQ_h^1 \in\mathbb{M}_h$, such that
 \begin{equation}\label{eq:satisfy1}
 (\bfQ_h^1 ,\bfepsilon(\bfzeta_{h}))=\|\bfQ_h^1 \|_0^2=C_1\|\bfepsilon(\bfzeta_{h})\|_0^2,
 \mbox{~~and~}\bfdiv_h\bfQ_h^1 =0.
 \end{equation}
 For $v_h\in W_h$, $\bfzeta_h\in\Theta_h$, from \eqref{inf-sup2}
 for any positive constant $C_2$ there exists $\bfQ_h^2  \in\mathbb{M}_h$,
such that
\begin{equation}\label{eq:satisfy2}
 (\bfdiv_h\bfQ_h^2  , \bfR_h(\grad v_{h}-\bfzeta_{h}))
 =-C_2(t^2+h^2)\|\bfdiv_h\bfQ_h^2  \|_{0}^2
 =-C_2^{-1}(t^2+h^2)^{-1}\|\bfR_h(\grad
 v_{h}-\bfzeta_{h})\|_{0}^2,
\end{equation}
 and there exists a positive constant $C_3$ independent of $h$ and
 $t$, such that
 \begin{equation}
 \|\bfdiv_h\bfQ_h^2  \|_{0}^2=C_3h^{-2}\|\bfQ_h^2  \|_{0}^2.
\end{equation}

Let $\bfQ_h=\bfQ_h^1 +\bfQ_h^2  $, then we have
\begin{eqnarray*}
&&\tilde{b}(\bfQ_h,\bfdiv_h\bfQ_h;v_{h},\bfzeta_{h})\\
&=&(\bfQ_h^1 +\bfQ_h^2  ,\bfepsilon(\bfzeta_{h}))-(\bfdiv_h\bfQ_h^1 +\bfdiv_h\bfQ_h^2  ,\bfR_h(\grad
v_{h}-\bfzeta_{h}))\\
&=&(\bfQ_h^1 ,\bfepsilon(\bfzeta_{h}))+(\bfQ_h^2  ,\bfepsilon(\bfzeta_{h}))-(\bfdiv_h\bfQ_h^2  ,\bfR_h(\grad
v_{h}-\bfzeta_{h}))\\
&\geq
&\|\bfQ_h^1 \|_{0}^{2}-\|\bfQ_h^2  \|_{0}\|\bfepsilon(\bfzeta_{h})\|_{0}+C_2(t^{2}+h^{2})\|\bfdiv_h\bfQ_h^2  \|_{0}^{2}\\
&\geq
&\|\bfQ_h^1 \|_{0}^{2}-\frac{C_{1}}{2}\|\bfepsilon(\bfzeta_{h})\|_{0}^2-\frac{1}{2C_1}\|\bfQ_h^2  \|_{0}^{2}
 +C_{2}(t^{2}+h^2)\|\bfdiv_h\bfQ_h^2  \|_{0}^{2}\\
 &\geq&\|\bfQ_h^1 \|_{0}^{2}-\frac{C_{1}}{2}\|\bfepsilon(\bfzeta_{h})\|_{0}^2-\frac{h^2}{2C_1C_3}\|\bfdiv_h\bfQ_h^2  \|_{0}^{2}
 +C_{2}(t^{2}+h^2)\|\bfdiv_h\bfQ_h^2  \|_{0}^{2}\\
 &\geq
&\frac{C_{1}}{2}\|\bfQ_h^1 \|_{0}^{2}
 +\frac{C_2}{2}(t^2+h^2)\|\bfdiv_h\bfQ_h^2  \|_{0}^{2} \mbox{~~~(by taking $C_2=\frac{1}{C_1C_3}$)}\\
&\approx
&\|\bfepsilon(\bfzeta_h)\|_{0}^{2}+(t^2+h^2)^{-1}\|\bfR_h(\grad
 v_{h}-\bfzeta_{h})\|_{0}^2\\
&\approx
&\|\bfQ_h^1 +\bfQ_h^2  \|_{0}^{2}+(t^2+h^2)\|\bfdiv_h\bfQ_h^1 +\bfdiv_h\bfQ_h^2 \|_0^2
=\|\bfQ_h\|_{0}^{2}+(t^2+h^2)\|\bfdiv_h\bfQ_h\|_0^2.
\end{eqnarray*}
 This immediately indicates
 \begin{eqnarray*}
\sup_{\bfQ_{h}\in\mathbb{M}_{h}}
\frac{\tilde{b}(\bfQ_{h},\bfdiv_h\bfQ_h;v_{h},\bfzeta_{h})}{|\|\bfQ_{h}|\|_{h,1}}
\gtrsim|\|(v_h,\bfzeta_h)|\|_{h,2}.
\end{eqnarray*}
\end{proof}

With the above continuity and coercivity results, we can obtain the following error estimates for MiSP4 element by following the same way as in Theorem \ref{theorem: error-MiSP3}.
\begin{theorem}\label{theorem: error-MiSP4}
 Let $(\bfM,\bfgamma=\bfdiv_h\bfM_h,w,\bfbeta)\in \mathbb{M}\times\Gamma\times W\times \Theta$ be the
 solution of the problem \eqref{eq:WeakForm2-a}-\eqref{eq:WeakForm2-b}. Then the discretization
 problem \eqref{eq:DiscreteForm2-a}-\eqref{eq:DiscreteForm2-b} admits a unique solution
 $(\bfM_{h},w_{h},\bfbeta_{h})\in \mathbb{M}_{h}\times W_{h}\times\Theta_{h}$ such that
\begin{eqnarray*}
&&|\|\bfM-\bfM_{h}|\|_{h,1}+|\|(w-w_{h},\bfbeta-\bfbeta_{h})|\|_{h,2}\\
& \lesssim &\inf\limits_{\bfQ_{h}\in
\mathbb{M}_{h}}|\|\bfM-\bfQ_{h}|\|_{h,1}+\inf\limits_{(v_{h},\bfzeta_{h})\in W_h\times\Theta_h}|\|(w-v_{h},\bfbeta-\bfzeta_{h})|\|_{h,2}+ht\|\bfgamma\|_{1}+h\|\bfgamma\|_{0}.\\
\end{eqnarray*}
\end{theorem}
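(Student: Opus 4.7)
The plan is to transcribe the argument of Theorem \ref{theorem: error-MiSP3} almost verbatim, replacing the MiSP3 continuity, coercivity and inf-sup ingredients by their MiSP4 counterparts (Lemmas \ref{lemma: Continuity-MiSP4}, \ref{lemma: Coercivity1-MiSP4} and \ref{lemma: Coercivity2-MiSP4}). Because Lemma \ref{lemma: Coercivity1-MiSP4} yields coercivity of $a(\cdot,\bfdiv_h\cdot;\cdot,\bfdiv_h\cdot)$ on the whole space $\mathbb{M}_h$ (and not merely on the discrete kernel $\mathrm{Ker}\,B$), the MiSP4 case is structurally simpler than the MiSP3 one: no discrete Helmholtz decomposition has to be invoked.

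First I would subtract the discrete system \eqref{eq:DiscreteForm2-a}--\eqref{eq:DiscreteForm2-b} from the continuous one \eqref{eq:WeakForm2-a}--\eqref{eq:WeakForm2-b}. Since $\tilde b$ differs from $b$ only through $\bfR_h$, a consistency residual
\begin{equation*}
(\bfdiv_h\bfQ_h,\;\grad w-\bfbeta-\bfR_h(\grad w-\bfbeta))
\end{equation*}
appears in the error equation. Using $\grad w-\bfbeta=\lambda^{-1}t^{2}\bfgamma$ together with $\|\bfpsi-\bfR_h\bfpsi\|_0\lesssim h\|\bfpsi\|_1$, this residual is bounded by $t\|\bfdiv_h\bfQ_h\|_0\cdot ht\|\bfgamma\|_1$; since $t^{2}/(t+h)\le t$ this pairs cleanly with the $(t+h)\|\bfdiv_h\bfQ_h\|_0$ contribution to $|\|\bfQ_h|\|_{h,1}$, producing exactly the announced $ht\|\bfgamma\|_1$ factor.

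Second, I would introduce the affine variety
\begin{equation*}
Z_h(g)=\{\bfQ_h\in\mathbb{M}_h:\tilde b(\bfQ_h,\bfdiv_h\bfQ_h;v_h,\bfzeta_h)=-(g,v_h)\ \forall\,(v_h,\bfzeta_h)\in W_h\times\Theta_h\},
\end{equation*}
noting that $\bfM_h\in Z_h(g)$. For any $\tilde{\bfQ}_h\in Z_h(g)$, Lemma \ref{lemma: Coercivity1-MiSP4} applied to $\tilde{\bfQ}_h-\bfM_h$, combined with the error equation and the continuity in Lemma \ref{lemma: Continuity-MiSP4}, yields
\begin{equation*}
|\|\tilde{\bfQ}_h-\bfM_h|\|_{h,1}\lesssim |\|\tilde{\bfQ}_h-\bfM|\|_{h,1}+|\|(w-v_h,\bfbeta-\bfzeta_h)|\|_{h,2}+ht\|\bfgamma\|_{1}.
\end{equation*}
To convert $\inf_{\tilde{\bfQ}_h\in Z_h(g)}$ into $\inf_{\bfQ_h\in\mathbb{M}_h}$, I invoke the inf-sup condition of Lemma \ref{lemma: Coercivity2-MiSP4} to construct, for any $\bfQ_h\in\mathbb{M}_h$, a correction $\bar{\bfQ}_h\in\mathbb{M}_h$ such that $\bfQ_h+\bar{\bfQ}_h\in Z_h(g)$ and
\begin{equation*}
|\|\bar{\bfQ}_h|\|_{h,1}\lesssim |\|\bfM-\bfQ_h|\|_{h,1}+h\|\bfgamma\|_0,
\end{equation*}
the extra $h\|\bfgamma\|_0$ coming from $(\bfgamma,\bfzeta_h-\bfR_h\bfzeta_h)\lesssim h\|\bfgamma\|_0\|\bfzeta_h\|_1$. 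A triangle inequality then bounds $|\|\bfM-\bfM_h|\|_{h,1}$ by the right-hand side of the theorem.

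Third, the estimate on $(w-w_h,\bfbeta-\bfbeta_h)$ follows by applying Lemma \ref{lemma: Coercivity2-MiSP4} once more for arbitrary $(v_h,\bfzeta_h)\in W_h\times\Theta_h$, rewriting $\tilde b(\bfQ_h,\bfdiv_h\bfQ_h;v_h-w_h,\bfzeta_h-\bfbeta_h)$ via the error equation, controlling each resulting term by continuity plus the consistency residual, and absorbing $|\|\bfM-\bfM_h|\|_{h,1}$ through the estimate obtained in the previous step; one last triangle inequality concludes. The only delicate point, and the step I would check most carefully, is the balance between the $t^2$ weight in $\grad w-\bfbeta$ and the $(t+h)$ weight in $|\|\bfQ_h|\|_{h,1}$ that makes the consistency residual contribute $ht\|\bfgamma\|_1$ rather than something larger; apart from that, the proof is a direct reenactment of Theorem \ref{theorem: error-MiSP3}, with no genuinely new obstacle.
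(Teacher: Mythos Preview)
Your proposal is correct and matches the paper's approach exactly: the paper gives no independent proof but simply states that the result follows ``by following the same way as in Theorem~\ref{theorem: error-MiSP3}'' using the MiSP4 continuity, coercivity, and inf--sup lemmas (Lemmas~\ref{lemma: Continuity-MiSP4}--\ref{lemma: Coercivity2-MiSP4}), which is precisely what you outline. Your remark that Lemma~\ref{lemma: Coercivity1-MiSP4} yields coercivity on all of $\mathbb{M}_h$ (so the MiSP4 argument is even simpler, needing no discrete Helmholtz decomposition) and your bookkeeping of the consistency residual via $t^2/(t+h)\le t$ are both accurate.
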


Next we consider the approximation properties of finite element spaces. Lemma \ref{lemma: error_MiSP4_1}
gives the error estimates for space $\mathbb{\bfM}_h$, and Lemma \ref{lemma: error_MiSP4_2} is for space $W_h\times\Theta_h$.
We need to notice here the key for Lemma \ref{lemma: error_MiSP4_2} is the property of the operator $\bfR_h$ described in Lemma \ref{lemma projection}.
Finally the  convergence theorem, i.e. Theorem \ref{theorem: error2-MiSP4}, follows from these lemmas.

\begin{lemma}\label{lemma: error_MiSP4_1}
It holds
\begin{equation*}
 \inf\limits_{\bfQ_{h}\in
 \mathbb{M}_{h}}|\|\bfM-\bfQ_{h}|\|_{h,1}
 \lesssim
 h\left(\|\bfM\|_1+\|\bfgamma\|_0+t\|\bfgamma\|_1\right).
\end{equation*}
\end{lemma}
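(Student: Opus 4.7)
The plan is to exhibit $\bfQ_h = \bfQ_h^1 + \bfQ_h^2 \in \mathbb{M}_h$ by a two-piece construction that parallels the proof of Lemma~\ref{lemma: error MiSP3_1} but uses divergence-based rather than Helmholtz-based estimates. For the first piece I would take $\bfQ_h^1$ to be a componentwise $L^2$-stable interpolation of $\bfM$ (for instance, a Scott-Zhang or Clement operator based on the isoparametric bilinear nodal basis). Standard interpolation theory then yields $\|\bfM - \bfQ_h^1\|_0 \lesssim h\|\bfM\|_1$, and the $H^1$-stability of the interpolation yields $\|\bfdiv_h \bfQ_h^1\|_0 \lesssim \|\bfM\|_1$.

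For the correction piece, let $P_h:L^2(\Omega)^2 \to \{\text{piecewise constants on }\mathcal{T}_h\}$ denote the elementwise $L^2$-projection, which is $L^2$-stable and satisfies $\|\bfgamma - P_h\bfgamma\|_0 \lesssim h\|\bfgamma\|_1$. I would construct $\bfQ_h^2 \in \mathbb{M}_h$ element by element so that $\bfdiv_h \bfQ_h^2 = P_h \bfgamma - \bfdiv_h \bfQ_h^1$ together with the local bound $\|\bfQ_h^2\|_{0,K} \lesssim h_K \|\bfdiv_h \bfQ_h^2\|_{0,K}$. This rests on a Fortin-type surjectivity of the local divergence map from $\mathbb{M}_h|_K$ onto the piecewise constants, with a right inverse of norm $\lesssim h_K$, uniformly in the shape-regular quadrilateral geometry; such a right inverse is obtained by a reference-element computation followed by the Jacobian scaling already used in Lemma~\ref{Lemma: PrepareWork}(2). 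This gives $\|\bfQ_h^2\|_0 \lesssim h(\|\bfgamma\|_0 + \|\bfM\|_1)$ and therefore $\|\bfM - \bfQ_h\|_0 \lesssim h(\|\bfM\|_1 + \|\bfgamma\|_0)$.

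By construction $\bfdiv_h(\bfM - \bfQ_h) = \bfgamma - P_h \bfgamma$. The key trick is to split the mesh-dependent divergence contribution as $(t+h)\|\bfgamma - P_h\bfgamma\|_0 \le t\|\bfgamma - P_h\bfgamma\|_0 + h\|\bfgamma - P_h\bfgamma\|_0$: on the $t$-part I use the approximation bound $\|\bfgamma - P_h\bfgamma\|_0 \lesssim h\|\bfgamma\|_1$ to get $th\|\bfgamma\|_1$; on the $h$-part I use the triangle inequality together with $L^2$-stability $\|P_h\bfgamma\|_0 \le \|\bfgamma\|_0$ to get $2h\|\bfgamma\|_0$. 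Summing and combining with the $L^2$ estimate on $\bfM - \bfQ_h$ delivers the claimed bound.

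The main obstacle is the Fortin construction of $\bfQ_h^2$ on general (non-affine) quadrilaterals, where the isoparametric mapping $F_K$ couples the tensor components in a way that complicates the reference-element inversion. The splitting of $(t+h)\|\cdot\|_0$ in the previous paragraph is what lets the final bound depend on $\|\bfgamma\|_0 + t\|\bfgamma\|_1$ rather than on $h\|\bfgamma\|_1$, which would scale badly as $t\to 0$ in view of the regularity estimate $t\|\bfgamma\|_1 \lesssim \|g\|_0$ from Theorem~\ref{Th:Regularity}.
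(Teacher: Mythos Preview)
Your approach is essentially the paper's: a two-piece construction $\bfQ_h=\bfQ_h^1+\bfQ_h^2$ and the splitting $(t+h)\|\bfgamma-P_h\bfgamma\|_0\le t\|\bfgamma-P_h\bfgamma\|_0+h\|\bfgamma-P_h\bfgamma\|_0$, bounding the first summand by $th\|\bfgamma\|_1$ and the second by $h\|\bfgamma\|_0$. That is exactly what the paper does.

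There is one small but genuine inconsistency. You take $\bfQ_h^1$ to be a Scott--Zhang/Cl\'ement interpolant, so $\bfdiv_h\bfQ_h^1$ is in general \emph{not} piecewise constant. You then require $\bfdiv_h\bfQ_h^2=P_h\bfgamma-\bfdiv_h\bfQ_h^1$, yet justify the construction of $\bfQ_h^2$ by ``surjectivity of the local divergence map onto the piecewise constants.'' That justification does not match the (non-constant) target you need to hit. The paper avoids this by choosing $\bfQ_h^1$ to be the \emph{piecewise constant} $L^2$-projection of $\bfM$ (which lies in $\mathbb{M}_h$ and has $\bfdiv_h\bfQ_h^1=0$), so that the correction only has to realize the piecewise constant target $\bfdiv_h\bfQ_h^2=P_h\bfgamma$, and the required local right inverse with norm $\lesssim h_K$ is exactly the one you describe. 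Either switch to the piecewise constant $\bfQ_h^1$, or upgrade your Fortin claim to a bounded right inverse on the full local range of $\bfdiv_h$ (which also works, by a reference-element argument, but is more than you stated). With that fix your proof goes through and coincides with the paper's.
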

\begin{proof}
For the exact solution $\bfM$, first let $\bfQ_h^1$ be its piecewise constant $L^2$ projection, then
 $$\|\bfM-\bfQ_h^1\|_0\lesssim h\|\bfM\|_1.$$
For the exact solution $\bfgamma$, secondly choose $\bfQ_h^2$ satisfying:

(1) $\bfdiv_h\bfQ_h^2$ is the piecewise constant $L^2$ projection of $\bfgamma$,
then
$$\|\bfgamma-\bfdiv_h\bfQ_h^2\|_0\approx h \|\bfgamma\|_1, \quad  \|\bfdiv_h\bfQ_h^2\|_0\lesssim\|\bfgamma\|_0;$$

(2) $\|\bfQ_h^2\|_0\approx h \|\bfdiv_h\bfQ_h^2\|_0$, then  $\|\bfQ_h^2\|_0\lesssim h\|\bfgamma\|_0.$

Take $\bfQ_h=\bfQ_h^1+\bfQ_h^2$, then we get the desired result
\begin{eqnarray*}
&&|\|\bfM-\bfQ_{h}|\|_{h,1}\\
&\le&\|\bfM-\bfQ_h^1\|_0+\|\bfQ_h^2\|_0+(h+t)\|\bfgamma-\bfdiv_h\bfQ_{h}^2\|_0\\
&\lesssim&h\|\bfM\|_1+h\|\bfgamma\|_0+h\|\bfgamma-\bfdiv_h\bfQ_{h}^2\|_0+t\|\bfgamma-\bfdiv_h\bfQ_{h}^2\|_0\\
&\lesssim&h\|\bfM\|_1+h\|\bfgamma\|_0+h\|\bfgamma\|_0+th\|\bfgamma\|_1\\
&\lesssim&h\left(\|\bfM\|_1+\|\bfgamma\|_0+t\|\bfgamma\|_1\right).
\end{eqnarray*}
\end{proof}

\begin{remark}
We note that with the same technique as in Lemma \ref{lemma: error_MiSP4_1},  the condition $t\lesssim h$  in \cite[Lemma 3.2]{C;Xie;Yu;Zhou2011} and in \cite[Theorem 4.3]{C;Xie;Yu;Zhou2011}  can be removed.
\end{remark}

\begin{assumption}\label{assumption} \cite{Duran-Hernandez-Nieto-Liberman-Rodriguez2004}
The mesh $\mathcal{T}_{h}$ is a refinement of a coarser partition
$\mathcal{T}_{2h}$, obtained by jointing the midpoints of each
opposite edge in each $K_{2h}\in \mathcal{T}_{2h}$ (called
macroelement). In addition, $\mathcal{T}_{2h}$ is a similar
refinement of a still coarser regular partition $\mathcal{T}_{4h}$.
\end{assumption}

\begin{lemma}\label{lemma projection}
 \cite[Lemma 3.2, 3.4]{Duran-Hernandez-Nieto-Liberman-Rodriguez2004}Under Assumption \ref{assumption}, let $W_{h}$, $\Theta_{h}$, $Z_{h}$ and the operator $\bfR_h$ be
defined as before. Then for the given $(w,\bfbeta)$, there exist
$\hat{w}\in W_{h}$ and  $\hat{\bfbeta}\in \Theta_{h}$ and operator
$\boldsymbol{\Pi}: H_{0}(rot,\Omega)\bigcap
H^{1}(\Omega)^{2}\rightarrow Z_{h}$ satisfying
\begin{equation}
\|\bfbeta-\hat{\bfbeta}\|_{1}\lesssim h\|\bfbeta\|_{2},
\end{equation}
\begin{equation}
 \bfR_h(\grad \hat{w}-\hat{\bfbeta})=\boldsymbol{\Pi}(\grad w-\bfbeta),
\end{equation}
and
\begin{equation}
 \|\boldsymbol{\eta}-\boldsymbol{\Pi}\boldsymbol{\eta}\|_{0}\lesssim h\|\boldsymbol{\eta}\|_{1}, \forall \boldsymbol{\eta}\in H_{0}(rot,\Omega)\bigcap
 H^{1}(\Omega)^{2}.
\end{equation}
\end{lemma}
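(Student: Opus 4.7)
The plan is to build $\boldsymbol{\Pi}$ and $(\hat w,\hat\bfbeta)$ so that both realise prescribed tangential edge moments of $\grad w-\bfbeta$, and then to use the macroelement structure to reconcile them. Since $\bfR_h$ is characterized by the degrees of freedom $\int_e(\cdot)\cdot\bft_e$ for each edge $e\in\mathcal{T}_h$, any $\bfpsi_h\in Z_h$ is uniquely determined by these moments, and the identity $\bfR_h(\grad\hat w-\hat\bfbeta)=\boldsymbol{\Pi}(\grad w-\bfbeta)$ will follow as soon as the edge moments of both sides agree.

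First I would define $\boldsymbol{\Pi}\bfeta\in Z_h$ to be the unique element satisfying $\int_e\boldsymbol{\Pi}\bfeta\cdot\bft_e=\int_e\bfeta\cdot\bft_e$ for every edge $e$ of $\mathcal{T}_h$, so that $\boldsymbol{\Pi}$ is the natural extension of $\bfR_h$ to $H^1$. The approximation bound $\|\bfeta-\boldsymbol{\Pi}\bfeta\|_0\lesssim h\|\bfeta\|_1$ would be obtained by pulling back to $\hat K$ using the contravariant Piola-type transformation implicit in \eqref{Zh-MiSP4}, applying a Bramble--Hilbert argument on $\hat K$, and pushing forward using the Jacobian bounds that follow from the shape-regularity hypothesis \eqref{partition condition}. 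This is completely analogous to the standard analysis of the lowest-order rotated Raviart--Thomas element on quadrilaterals.

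For $\hat\bfbeta$ I would take, on each macroelement $K_{2h}$, an isoparametric bilinear quasi-interpolant of $\bfbeta$ (glued at macro-interfaces to remain in $\Theta_h$), which directly gives $\|\bfbeta-\hat\bfbeta\|_1\lesssim h\|\bfbeta\|_2$ by standard estimates. The commuting identity then reduces to solving, on each macroelement, a finite-dimensional linear system for the interior nodal values of $\hat w$: integration along an edge $e$ with endpoints $P_1,P_2$ gives $\int_e\grad\hat w\cdot\bft_e=\hat w(P_2)-\hat w(P_1)$, so matching tangential edge integrals of $\bfR_h(\grad\hat w-\hat\bfbeta)$ with those of $\grad w-\bfbeta$ is a linear system in the interior vertex values of $\hat w$, with the edge integrals of $\hat\bfbeta-\bfbeta$ and of $w,\bfbeta$ as data. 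The double refinement $\mathcal{T}_{4h}\!\supset\!\mathcal{T}_{2h}\!\supset\!\mathcal{T}_h$ in Assumption \ref{assumption} is precisely what provides enough interior degrees of freedom to make this local system square.

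The main obstacle is establishing uniform invertibility of the local macroelement system and the bound $\|w-\hat w\|_1\lesssim h\|w\|_2$ for the resulting $\hat w$. Unlike on triangles or affinely mapped parallelograms, where the commuting diagram $\grad W_h\subset Z_h$ delivers the identity element by element, on genuinely non-affine quadrilateral meshes the isoparametric map $F_K$ obstructs such an element-wise construction, which is why the macroelement patch is unavoidable. I would extract uniform invertibility by a compactness argument on a parametrized family of reference macroelements whose shape parameters range over a compact set determined by $\varrho$ in \eqref{partition condition}; once this local stability is in place, summing the macroelement bounds and combining with the approximation property of $\boldsymbol{\Pi}$ and of the quasi-interpolant for $\bfbeta$ completes the proof.
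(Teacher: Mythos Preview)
The paper itself does not supply a proof of this lemma; it is quoted directly from \cite{Duran-Hernandez-Nieto-Liberman-Rodriguez2004}. Nonetheless, your outline contains a genuine gap.

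By defining $\boldsymbol{\Pi}$ through the same edge moments as $\bfR_h$ you are in effect taking $\boldsymbol{\Pi}=\bfR_h$. The commuting identity then demands, for every fine edge $e$ with endpoints $P_1,P_2$,
\[
\hat w(P_2)-\hat w(P_1)=w(P_2)-w(P_1)+\int_e(\hat\bfbeta-\bfbeta)\cdot\bft_e .
\]
Summing these relations around $\partial K$ for any $K\in\mathcal{T}_h$ forces the compatibility condition $\int_K\rot(\hat\bfbeta-\bfbeta)\,d\bfx=0$. A generic isoparametric bilinear quasi-interpolant $\hat\bfbeta$ does \emph{not} satisfy this elementwise identity, so the linear system for $\hat w$ is inconsistent; no amount of macroelement refinement repairs this, because the obstruction is a failure of closure conditions, not a shortage of unknowns. (Equivalently, on a quadrilateral mesh the number of fine edges is roughly twice the number of vertices, so the global system is always overdetermined.) Your assertion that the double refinement ``makes the local system square'' is therefore not correct.

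In the construction of \cite{Duran-Hernandez-Nieto-Liberman-Rodriguez2004}, $\hat\bfbeta$ is not chosen first as an arbitrary quasi-interpolant: its values at the interior nodes of each macroelement are determined precisely by imposing tangential-moment conditions $\int_e\hat\bfbeta\cdot\bft_e=\int_e\bfbeta\cdot\bft_e$ along appropriate edges, which simultaneously secures the compatibility needed to define $\hat w$ and delivers the $H^1$ approximation estimate. The operator $\boldsymbol{\Pi}$ is defined at the macroelement level and is genuinely different from $\bfR_h$; the identity $\bfR_h(\grad\hat w-\hat\bfbeta)=\boldsymbol{\Pi}(\grad w-\bfbeta)$ then emerges from this joint construction rather than from solving for $\hat w$ after $\hat\bfbeta$ has been frozen. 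The role of Assumption~\ref{assumption} is to provide enough interior nodal degrees of freedom in $\hat\bfbeta$ and $\hat w$ to enforce these moment conditions while keeping both functions globally continuous, not to balance a system in $\hat w$ alone.
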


\begin{lemma}\label{lemma: error_MiSP4_2}
Under Assumption \ref{assumption}, it holds
\begin{equation}
\inf\limits_{(v_{h},\bfzeta_{h})\in
W_{h}\times\Theta_{h}}|\|(w-v_{h},\bfbeta-\bfzeta_{h})|\|_{h,2}
\lesssim
 h\|\bfbeta\|_{2}+\frac{ht^{2}}{t+h}\|\bfgamma\|_{1}.
\end{equation}
\end{lemma}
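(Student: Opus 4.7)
The plan is to construct explicit approximations $v_h\in W_h$ and $\bfzeta_h\in\Theta_h$ using the pair $(\hat{w},\hat{\bfbeta})$ provided by Lemma \ref{lemma projection}, so that the two pieces of the norm $|\|\cdot|\|_{h,2}$ can be estimated separately. Specifically, I would take $v_h=\hat w$ and $\bfzeta_h=\hat\bfbeta$, since the whole point of Lemma \ref{lemma projection} is that this pair is tailored to commute $\bfR_h$ through the reduction operator $\boldsymbol\Pi$.

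The first piece is straightforward: from $\|\bfbeta-\hat\bfbeta\|_1\lesssim h\|\bfbeta\|_2$, the bound $\|\bfepsilon(\bfbeta-\hat\bfbeta)\|_0\lesssim h\|\bfbeta\|_2$ falls out immediately. For the second piece, the key is the commuting identity $\bfR_h(\grad\hat w-\hat\bfbeta)=\boldsymbol\Pi(\grad w-\bfbeta)$, which lets me rewrite
\[
\bfR_h\bigl(\grad(w-\hat w)-(\bfbeta-\hat\bfbeta)\bigr)=\bfR_h(\grad w-\bfbeta)-\boldsymbol\Pi(\grad w-\bfbeta).
\]
Now I use the Reissner–Mindlin constitutive relation $\grad w-\bfbeta=\tfrac{t^2}{\lambda}\bfgamma$, which shows that the above difference equals $\tfrac{t^2}{\lambda}(\bfR_h\bfgamma-\boldsymbol\Pi\bfgamma)$. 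Here the factor $t^2$ is exactly what we need to produce the $\tfrac{ht^2}{t+h}$ factor in the claim.

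Next I apply the triangle inequality together with the approximation estimates for $\bfR_h$ and $\boldsymbol\Pi$:
\[
\|\bfR_h\bfgamma-\boldsymbol\Pi\bfgamma\|_0\le\|\bfgamma-\bfR_h\bfgamma\|_0+\|\bfgamma-\boldsymbol\Pi\bfgamma\|_0\lesssim h\|\bfgamma\|_1,
\]
valid since $\bfgamma\in H^1(\Omega)^2\cap H_0(\rot,\Omega)$ by the regularity in Theorem \ref{Th:Regularity}. Dividing by the weight $(t+h)$ gives
\[
(t+h)^{-1}\bigl\|\bfR_h\bigl(\grad(w-\hat w)-(\bfbeta-\hat\bfbeta)\bigr)\bigr\|_0\lesssim\frac{ht^2}{t+h}\|\bfgamma\|_1,
\]
and combining with the first piece yields the claim.

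The only subtle point, and where I would spend extra care, is verifying that the commuting identity from Lemma \ref{lemma projection} indeed applies verbatim to the rescaled quantity $\tfrac{t^2}{\lambda}\bfgamma=\grad w-\bfbeta$: i.e.\ that $\grad w-\bfbeta$ lies in the domain $H_0(\rot,\Omega)\cap H^1(\Omega)^2$ where $\boldsymbol\Pi$ is defined. This follows from $w\in H_0^1(\Omega)\cap H^2(\Omega)$, $\bfbeta\in H_0^1(\Omega)^2\cap H^2(\Omega)^2$, and $(\grad w-\bfbeta)\cdot\bft=0$ on $\partial\Omega$ (since $w$ vanishes on the boundary, its tangential gradient is zero, and $\bfbeta=0$ on $\partial\Omega$ as well). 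Once this is checked, the rest is just bookkeeping. No further obstacle is anticipated — the work has been done upstream, in Lemma \ref{lemma projection}, whose proof under Assumption \ref{assumption} is borrowed from \cite{Duran-Hernandez-Nieto-Liberman-Rodriguez2004}.
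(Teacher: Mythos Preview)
Your proposal is correct and follows essentially the same approach as the paper: choose $(v_h,\bfzeta_h)=(\hat w,\hat{\bfbeta})$ from Lemma~\ref{lemma projection}, use the commuting identity $\bfR_h(\grad\hat w-\hat{\bfbeta})=\boldsymbol{\Pi}(\grad w-\bfbeta)$, and bound $\|\bfR_h(\grad w-\bfbeta)-\boldsymbol{\Pi}(\grad w-\bfbeta)\|_0$ by the triangle inequality through $\grad w-\bfbeta$ together with the $O(h)$ approximation estimates for $\bfR_h$ and $\boldsymbol{\Pi}$. The only cosmetic difference is that you pull out the scalar $t^2/\lambda$ before applying the operator estimates, whereas the paper applies them to $\grad w-\bfbeta$ directly and then identifies $\lambda t^{-2}(\grad w-\bfbeta)=\bfgamma$; your explicit check that $\grad w-\bfbeta\in H_0(\rot,\Omega)\cap H^1(\Omega)^2$ is a nice addition that the paper leaves implicit.
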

\begin{proof}
 Choose $(v_{h},\bfzeta_{h})=(\hat{w},\hat{\bfbeta})$, with $(\hat{w},\hat{\bfbeta})\in W_{h}\times\Theta_{h}$ as in Lemma \ref{lemma
 projection}, then we can get
\begin{equation*}
\begin{array}{ll}
&\inf\limits_{(v_{h},\bfzeta_{h})\in W_{h}\times\Theta_{h}}|\|(w-v_{h},\bfbeta-\bfzeta_{h})|\|_{h,2}\\
 \displaystyle=&\inf\limits_{(v_{h},\bfzeta_{h})\in W_{h}\times\Theta_{h}}
\|\bfepsilon(\bfbeta)-\bfepsilon(\bfzeta_{h})\|_0+\frac{1}{t+h}\|\bfR_h(\grad
w-\bfbeta)-\bfR_h(\grad v_{h}-\bfzeta_{h})\|_{0}\\
 \displaystyle\leq
&\|\bfepsilon(\bfbeta)-\bfepsilon(\hat{\bfbeta})\|_0+\frac{1}{t+h}\|\bfR_h(\grad
w-\bfbeta)-\bfR_h(\grad \hat{w}-\hat{\bfbeta})\|_{0}\\
 \displaystyle=&\|\bfepsilon(\bfbeta)-\bfepsilon(\hat{\bfbeta})\|_0+\frac{1}{t+h}\|\bfR_h(\grad
w-\bfbeta)-\boldsymbol{\Pi}(\grad w-\bfbeta)\|_{0}\\
 \displaystyle \leq
&\|\bfepsilon(\bfbeta)-\bfepsilon(\hat{\bfbeta})\|_0+\frac{1}{t+h}\|\bfR_h(\grad
w-\bfbeta)-(\grad w-\bfbeta)\|_{0}+\frac{1}{t+h}\|(\grad
w-\bfbeta)-\boldsymbol{\Pi}(\grad w-\bfbeta)\|_{0}\\
 \displaystyle \lesssim &h\|\bfbeta\|_{2}+\frac{ht^{2}}{t+h}\|\lambda t^{-2}(\grad
w-\bfbeta)\|_{1}\\
 \displaystyle\lesssim &h\|\bfbeta\|_{2}+\frac{ht^{2}}{t+h}\|\bfgamma\|_{1}.
\end{array}
\end{equation*}
\end{proof}

 \begin{theorem}\label{theorem: error2-MiSP4} Under Assumption \ref{assumption}, the discretization
 problem \eqref{eq:DiscreteForm2-a}-\eqref{eq:DiscreteForm2-b} admits a unique
 solution $(\bfM_{h},w_{h},\bfbeta_{h})\in \mathbb{M}_{h}\times\Gamma_h\times
 W_{h}\times\Theta_{h}$ such that
\begin{equation}\label{eq: ConvergenceOrder2-MiSP4}
|\|\bfM-\bfM_{h}|\|_{h,1}+|\|(w-w_{h},\bfbeta-\bfbeta_{h})|\|_{h,2}
 \lesssim h(\|\bfM\|_{1}+\|\bfbeta\|_{2}+\|\bfgamma\|_{0}+t\|\bfgamma\|_{1})\lesssim h \|g\|_0.
\end{equation}
Furthermore, it holds
\begin{equation}
 \begin{array}{ll}
 &\|\bfM-\bfM_{h}\|_{0}+(t+h)\|\bfgamma-\bfgamma_h\|_0+\|w-w_{h}\|_{1}+\|\bfbeta-\bfbeta_{h}\|_{1}\\
 \lesssim& h(\|\bfM\|_{1}+\|w\|_{2}+\|\bfbeta\|_{2}+t\|\bfgamma\|_{1}+\|\bfgamma\|_{0})\lesssim h \|g\|_0.
 \end{array}
\end{equation}
\end{theorem}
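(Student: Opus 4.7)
The plan is to assemble the result from three ingredients already in place: the abstract quasi-optimal estimate of Theorem \ref{theorem: error-MiSP4}, the two approximation Lemmas \ref{lemma: error_MiSP4_1}--\ref{lemma: error_MiSP4_2}, and the regularity bound \eqref{eq:Regularity} of Theorem \ref{Th:Regularity}. Existence and uniqueness of $(\bfM_h,w_h,\bfbeta_h)$ follow from the coercivity and inf-sup conditions in Lemmas \ref{lemma: Coercivity1-MiSP4}--\ref{lemma: Coercivity2-MiSP4} by standard mixed-method theory, so I only need to focus on the error bounds.

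For \eqref{eq: ConvergenceOrder2-MiSP4} I would plug the approximation estimates directly into Theorem \ref{theorem: error-MiSP4}. The two infima contribute $h(\|\bfM\|_1+\|\bfgamma\|_0+t\|\bfgamma\|_1)$ and $h\|\bfbeta\|_2+\frac{ht^2}{t+h}\|\bfgamma\|_1$, respectively. Since $\frac{t^2}{t+h}\le t$, the fractional term is absorbed into $ht\|\bfgamma\|_1$; the extra $ht\|\bfgamma\|_1+h\|\bfgamma\|_0$ already supplied by Theorem \ref{theorem: error-MiSP4} adds nothing new, so the bound $h(\|\bfM\|_1+\|\bfbeta\|_2+\|\bfgamma\|_0+t\|\bfgamma\|_1)$ emerges. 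Invoking \eqref{eq:Regularity} to bound every such seminorm by $\|g\|_0$ then yields the $\lesssim h\|g\|_0$ form.

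For the second bound I would reduce each term on the left to one of the mesh-dependent norms. From $|\|\bfQ|\|_{h,1}=\|\bfQ\|_0+(t+h)\|\bfdiv_h\bfQ\|_0$ together with the identity $\bfgamma-\bfgamma_h=\bfdiv_h(\bfM-\bfM_h)$, the bound $\|\bfM-\bfM_h\|_0+(t+h)\|\bfgamma-\bfgamma_h\|_0\lesssim |\|\bfM-\bfM_h|\|_{h,1}$ is immediate, and a Korn inequality gives $\|\bfbeta-\bfbeta_h\|_1\lesssim \|\bfepsilon(\bfbeta-\bfbeta_h)\|_0\le |\|(w-w_h,\bfbeta-\bfbeta_h)|\|_{h,2}$.

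The main obstacle will be controlling $\|w-w_h\|_1$, since the second mesh-dependent norm only sees the projected combination $\bfR_h(\grad v-\bfzeta)$ rather than $\grad v$ itself. The plan is to write
\begin{equation*}
\|\grad(w-w_h)\|_0\le \|\grad w-\bfR_h\grad w\|_0+\|\bfR_h(\grad w-\grad w_h)\|_0,
\end{equation*}
use $\bfR_h\grad w_h=\grad w_h$ from Lemma \ref{lemma: error_Rh_MiSP4}, and insert $\pm\bfR_h\bfbeta\pm\bfR_h\bfbeta_h$ to split the second summand as $\|\bfR_h((\grad w-\bfbeta)-(\grad w_h-\bfbeta_h))\|_0+\|\bfR_h(\bfbeta-\bfbeta_h)\|_0$. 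The first piece is $\le (t+h)|\|(w-w_h,\bfbeta-\bfbeta_h)|\|_{h,2}$ by definition of the norm; the second is $\lesssim \|\bfbeta-\bfbeta_h\|_1$ via the $L^2$-approximation estimate of $\bfR_h$ (the second lemma of this section) plus the triangle inequality; and $\|\grad w-\bfR_h\grad w\|_0\lesssim h\|w\|_2$ by the same lemma. Collecting all pieces, using the bound for $\|\bfbeta-\bfbeta_h\|_1$ obtained above, and applying \eqref{eq:Regularity} completes the argument.
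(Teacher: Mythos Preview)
Your proposal is correct and follows essentially the same route as the paper: \eqref{eq: ConvergenceOrder2-MiSP4} is obtained by combining Theorem \ref{theorem: error-MiSP4} with Lemmas \ref{lemma: error_MiSP4_1}--\ref{lemma: error_MiSP4_2} and \eqref{eq:Regularity}, and the second estimate is reduced to bounding $\|w-w_h\|_1$ via the decomposition $\grad w-\grad w_h=(\grad w-\bfR_h\grad w)+\bfR_h(\grad(w-w_h)-(\bfbeta-\bfbeta_h))+\bfR_h(\bfbeta-\bfbeta_h)$, exactly as in the paper. Your write-up is slightly more explicit in spelling out why the remaining terms $\|\bfM-\bfM_h\|_0+(t+h)\|\bfgamma-\bfgamma_h\|_0$ and $\|\bfbeta-\bfbeta_h\|_1$ are controlled (via $\bfgamma-\bfgamma_h=\bfdiv_h(\bfM-\bfM_h)$ and Korn), which the paper leaves implicit.
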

\begin{proof}
 The estimate \eqref{eq: ConvergenceOrder2-MiSP4} follows from the Theorem \ref{theorem: error-MiSP4}, Lemma \ref{lemma: error_MiSP4_1} and Lemma \ref{lemma: error_MiSP4_2}.

  For the second estimate, we only need to estimate $\|w-w_{h}\|_{1}$.
 In fact,
 \begin{eqnarray*}
 &&\|\grad w-\grad w_{h}\|_{0}\\
 &=&\|\grad w-\bfR_h\grad w+\bfR_h(\grad w-\grad
 w_{h}-\bfbeta+\bfbeta_{h})+\bfR_h(\bfbeta-\bfbeta_{h})\|_0\\
 &\leq&\|\grad w-\bfR_h\grad w\|_{0}+\|\bfR_h(\grad w-\grad
 w_{h}-\bfbeta+\bfbeta_{h})\|_{0}+\|\bfR_h(\bfbeta-\bfbeta_{h})\|_0\\
&\lesssim
&h(\|w\|_2+\|\bfM\|_1+\|\bfbeta\|_2+h\|\bfgamma\|_1+\|\bfgamma\|_0).
\end{eqnarray*}
\end{proof}

\section{Numerical Results}
We compute a square plate with analytical solution to show the convergence.
This example is taken from \cite{Hu.J;Shi.Z2008}. The domain is the unit square $(0,1)^2$, the material parameters
are taken as $E=1.0$, $\nu=0.3$ and $\kappa=\frac56$. The exact solution
is: the first component of the rotation $\beta_1=100y^3(y-1)^3x^2(x-1)^(2x-1)$,
 the second component of the rotation $\beta_2=100x^3(x-1)^3y^2(y-1)^(2y-1)$,
 and the displacement $w=100(\frac13x^3(x-1)^3y^3(y-1)^3-\frac{2t^2}{5(1-\nu)}[y^3(y-1)^3x(x-1)(5x^2-5x+1)+x^3(x-1)^3y(y-1)(5y^2-5y+1)])$.
 Therefore, the transverse load
 $g=\frac{200E}{1-\nu^2}(x^3(x-1)^3(5y^2-5y+1)+y^3(y-1)^3(5x^2-5x+1)+x(x-1)y(y-1)(5x^2-5x+1)(5y^2-5y+1))$.
 For the plate thickness $t$, we consider four cases: $t=1.0, 0.1, 0.001, 1e-8$.

 The results for MiSP3 method under the uniform meshes (Figure \ref{fig:MiSP3-RegularMesh}) are reported in Table \ref{table:MiSP3-SquarePlate-ConvergenceOrder},
 while the results for MiSP4 method under the uniform meshes (Figure \ref{fig:MiSP4-RegularMesh}) are reported in Table \ref{table:MiSP4-SquarePlate-ConvergenceOrder}.
 These results are conformable to the error estimates in Theorem \ref{theorem: error2-MiSP3} and Theorem \ref{theorem: error2-MiSP4}.

\begin{figure}[!h]
\begin{center}
\setlength{\unitlength}{0.36cm}
\begin{picture}(28,9)
\put(0,0){\line(1,0){8}}
\put(0,4){\line(1,0){8}}
\put(0,8){\line(1,0){8}}
\put(0,0){\line(0,1){8}}
\put(4,0){\line(0,1){8}}
\put(8,0){\line(0,1){8}}

\put(0,0){\line(1,1){8}}
\put(0,4){\line(1,1){4}}
\put(4,0){\line(1,1){4}}

\put(10,0){\line(1,0){8}}
\put(10,2){\line(1,0){8}}
\put(10,4){\line(1,0){8}}
\put(10,6){\line(1,0){8}}
\put(10,8){\line(1,0){8}}
\put(10,0){\line(0,1){8}}
\put(12,0){\line(0,1){8}}
\put(14,0){\line(0,1){8}}
\put(16,0){\line(0,1){8}}
\put(18,0){\line(0,1){8}}

\put(10,0){\line(1,1){8}}
\put(10,2){\line(1,1){6}}
\put(10,4){\line(1,1){4}}
\put(10,6){\line(1,1){2}}
\put(12,0){\line(1,1){6}}
\put(14,0){\line(1,1){4}}
\put(16,0){\line(1,1){2}}

\put(20,0){\line(1,0){8}}
\put(20,1){\line(1,0){8}}
\put(20,2){\line(1,0){8}}
\put(20,3){\line(1,0){8}}
\put(20,4){\line(1,0){8}}
\put(20,5){\line(1,0){8}}
\put(20,6){\line(1,0){8}}
\put(20,7){\line(1,0){8}}
\put(20,8){\line(1,0){8}}
\put(20,0){\line(0,1){8}}
\put(21,0){\line(0,1){8}}
\put(22,0){\line(0,1){8}}
\put(23,0){\line(0,1){8}}
\put(24,0){\line(0,1){8}}
\put(25,0){\line(0,1){8}}
\put(26,0){\line(0,1){8}}
\put(27,0){\line(0,1){8}}
\put(28,0){\line(0,1){8}}

\put(20,0){\line(1,1){8}}
\put(20,1){\line(1,1){7}}
\put(20,2){\line(1,1){6}}
\put(20,3){\line(1,1){5}}
\put(20,4){\line(1,1){4}}
\put(20,5){\line(1,1){3}}
\put(20,6){\line(1,1){2}}
\put(20,7){\line(1,1){1}}
\put(21,0){\line(1,1){7}}
\put(22,0){\line(1,1){6}}
\put(23,0){\line(1,1){5}}
\put(24,0){\line(1,1){4}}
\put(25,0){\line(1,1){3}}
\put(26,0){\line(1,1){2}}
\put(27,0){\line(1,1){1}}
\end{picture}
\end{center}
\caption{Uniform mesh\label{fig:MiSP3-RegularMesh}}
\end{figure}
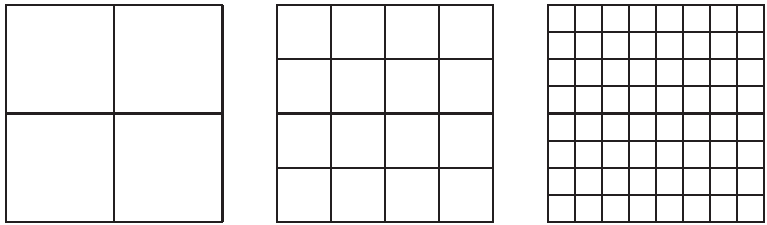

\begin{table}[!h]\renewcommand{\baselinestretch}{1.10}\small
\begin{center}
 \caption{Results of error on uniform mesh with MiSP3\label{table:MiSP3-SquarePlate-ConvergenceOrder}}
\begin{tabular}{cccccccc}\hline
$t$ &                         &$4\times 4$ &$8\times 8$ &$16\times 16$ &$32\times 32$ &$64\times 64$ &rate\\\hline
1   &$|w-w_h|_1$                  &0.2834    &0.1679    &0.0877    &0.0443    &0.0222    &0.9182\\
    &$|\beta-\beta_h|_1$          &0.0820    &0.0461    &0.0238    &0.0120    &0.0060    &0.9427\\
    &$\|M-M_h\|_0$                &0.0070    &0.0033    &0.0015    &0.0008    &0.0004    &1.0543\\
    &$\|\gamma-\gamma_h\|_0$      & 0.0882   &0.0525    &0.0275    &0.0139    &0.0070    &0.9156\\
    &$(t+h)\|\gamma-\gamma_h\|_0$ &0.1194    &0.0618    &0.0299    &0.0145    &0.0071    &1.0169\\ \hline
0.1 &$|w-w_h|_1$                  &0.0132    &0.0066    &0.0032    &0.0016    &0.0008    &1.0153\\
    &$|\beta-\beta_h|_1$          &0.0824    &0.0460    &0.0238    &0.0120    &0.0060    &0.9445\\
    &$\|M-M_h\|_0$                &0.0069    &0.0032    &0.0015    &0.0008    &0.0004    &1.0520\\
    &$\|\gamma-\gamma_h\|_0$      &0.0851    &0.0501    &0.0270    &0.0138    &0.0070    &0.9031\\
    &$(t+h)\|\gamma-\gamma_h\|_0$ &0.0386    &0.0139    &0.0051    &0.0020    &0.0008    &1.3764\\  \hline
0.001 &$|w-w_h|_1$                &0.0112    &0.0053    &0.0025    &0.0012    &0.0006    &1.0520\\
     &$|\beta-\beta_h|_1$          &0.0838    &0.0463    &0.0238    &0.0120    &0.0060    &0.9506\\
     &$\|M-M_h\|_0$                &0.0070    &0.0033    &0.0016    &0.0008    &0.0004    &1.0569\\
     &$\|\gamma-\gamma_h\|_0$      &0.0840    &0.0496    &0.0294    &0.0166    &0.0094    &0.7902\\
     &$(t+h)\|\gamma-\gamma_h\|_0$ &0.0298    &0.0088    &0.0026    &0.0007    &0.0002    &1.7753\\  \hline
1e-8 &$|w-w_h|_1$                &0.0112    &0.0053    &0.0025    &0.0012    &0.0006    &1.0520\\
     &$|\beta-\beta_h|_1$        &0.0838    &0.0463    &0.0238    &0.0120    &0.0060    &0.9506\\
     &$\|M-M_h\|_0$              &0.0070    &0.0033    &0.0016    &0.0008    &0.0004    &1.0569\\
     &$\|\gamma-\gamma_h\|_0$    &0.0840    &0.0497    &0.0294    &0.0167    &0.0097    &0.7781\\
     &$(t+h)\|\gamma-\gamma_h\|_0$ &0.0297  &0.0088    &0.0026    &0.0007    &0.0002    &1.7781\\\hline
\end{tabular}
\end{center}
\end{table}

 \begin{figure}[!h]
\begin{center}
\setlength{\unitlength}{0.36cm}
\begin{picture}(28,9)
\put(0,0){\line(1,0){8}}
\put(0,4){\line(1,0){8}}
\put(0,8){\line(1,0){8}}
\put(0,0){\line(0,1){8}}
\put(4,0){\line(0,1){8}}
\put(8,0){\line(0,1){8}}

\put(10,0){\line(1,0){8}}
\put(10,2){\line(1,0){8}}
\put(10,4){\line(1,0){8}}
\put(10,6){\line(1,0){8}}
\put(10,8){\line(1,0){8}}
\put(10,0){\line(0,1){8}}
\put(12,0){\line(0,1){8}}
\put(14,0){\line(0,1){8}}
\put(16,0){\line(0,1){8}}
\put(18,0){\line(0,1){8}}

\put(20,0){\line(1,0){8}}
\put(20,1){\line(1,0){8}}
\put(20,2){\line(1,0){8}}
\put(20,3){\line(1,0){8}}
\put(20,4){\line(1,0){8}}
\put(20,5){\line(1,0){8}}
\put(20,6){\line(1,0){8}}
\put(20,7){\line(1,0){8}}
\put(20,8){\line(1,0){8}}
\put(20,0){\line(0,1){8}}
\put(21,0){\line(0,1){8}}
\put(22,0){\line(0,1){8}}
\put(23,0){\line(0,1){8}}
\put(24,0){\line(0,1){8}}
\put(25,0){\line(0,1){8}}
\put(26,0){\line(0,1){8}}
\put(27,0){\line(0,1){8}}
\put(28,0){\line(0,1){8}}
\end{picture}
\end{center}
\caption{Uniform mesh\label{fig:MiSP4-RegularMesh}}
\end{figure}

\begin{table}[!h]\renewcommand{\baselinestretch}{1.10}\small
\begin{center}
 \caption{Results of error on uniform mesh with MiSP4\label{table:MiSP4-SquarePlate-ConvergenceOrder}}
\begin{tabular}{cccccccc}\hline
$t$ &                         &$4\times 4$ &$8\times 8$ &$16\times 16$ &$32\times 32$ &$64\times 64$ &rate\\\hline
1   &$|w-w_h|_1$                  &0.2806    &0.1460    &0.0736    &0.0369    &0.0184    &0.9819\\
    &$|\beta-\beta_h|_1$          &0.0771    &0.0383    &0.0191    &0.0095    &0.0048    &1.0039\\
    &$\|M-M_h\|_0$                &0.0062    &0.0020    &0.0008    &0.0003    &0.0002    &1.2977\\
    &$\|\gamma-\gamma_h\|_0$      &0.0877    &0.0458    &0.0231    &0.0116    &0.0058    &0.9799\\
    &$(t+h)\|\gamma-\gamma_h\|_0$ &0.1187    &0.0539    &0.0252    &0.0121    &0.0059    &1.0812\\\hline
0.1 &$|w-w_h|_1$                  &0.0117    &0.0052    &0.0025    &0.0012    &0.0006    &1.0610\\
    &$|\beta-\beta_h|_1$          &0.0775    &0.0384    &0.0191    &0.0095    &0.0048    &1.0057\\
    &$\|M-M_h\|_0$                &0.0061    &0.0020    &0.0008    &0.0003    &0.0002    &1.2957\\
    &$\|\gamma-\gamma_h\|_0$      &0.0870    &0.0458    &0.0231    &0.0116    &0.0058    &0.9771\\
    &$(t+h)\|\gamma-\gamma_h\|_0$ &0.0395    &0.0127    &0.0044    &0.0017    &0.0007    &1.4504\\\hline
0.001 &$|w-w_h|_1$                &0.0095    &0.0041    &0.0019    &0.0009    &0.0005    &1.0896\\
    &$|\beta-\beta_h|_1$          &0.0777    &0.0384    &0.0191    &0.0095    &0.0048    &1.0065\\
    &$\|M-M_h\|_0$                &0.0061    &0.0020    &0.0008    &0.0003    &0.0002    &1.2944\\
    &$\|\gamma-\gamma_h\|_0$      &0.0866    &0.0460    &0.0234    &0.0117    &0.0059    &0.9704\\
    &$(t+h)\|\gamma-\gamma_h\|_0$ &0.0307    &0.0082    &0.0021    &0.0005    &0.0001    &1.9555\\\hline
1e-8 &$|w-w_h|_1$                 &0.0095    &0.0041    &0.0019    &0.0009    &0.0005    &1.0896\\
    &$|\beta-\beta_h|_1$          &0.0777    &0.0384    &0.0191    &0.0095    &0.0048    &1.0065\\
    &$\|M-M_h\|_0$                &0.0061    &0.0020    &0.0008    &0.0003    &0.0002    &1.2944\\
    &$\|\gamma-\gamma_h\|_0$      &0.0866    &0.0460    &0.0234    &0.0117    &0.0059    &0.9703\\
    &$(t+h)\|\gamma-\gamma_h\|_0$ &0.0306    &0.0081    &0.0021    &0.0005    &0.0001    &1.9703\\\hline
\end{tabular}
\end{center}
\end{table}

 We note that the error analysis for MiSP4 element requires the partitions of domain to satisfy Assumption \ref{assumption}. However,  numerical results  in Table \ref{table:MiSP4-SquarePlate-NonConvergenceOrder} show that this assumption seems  not to be absolutely necessary for the uniform convergence, as is similar to the MITC4 element  \cite{Duran-Hernandez-Nieto-Liberman-Rodriguez2004}. Here the used partitions (Figure \ref{fig:MiSP4-IrregularMesh})  do not satisfy Assumption \ref{assumption}.

 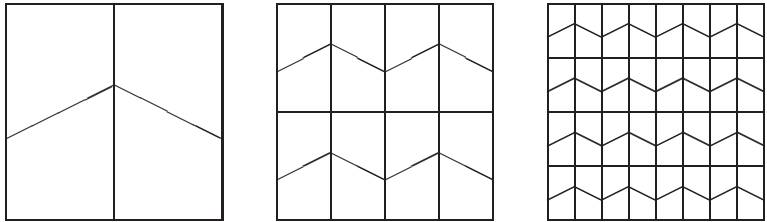
\begin{figure}[!h]
\begin{center}
\setlength{\unitlength}{0.36cm}
\begin{picture}(28,9)
\put(0,0){\line(1,0){8}}
\put(0,3){\line(2,1){4}}
\put(4,5){\line(2,-1){4}}
\put(0,8){\line(1,0){8}}
\put(0,0){\line(0,1){8}}
\put(4,0){\line(0,1){8}}
\put(8,0){\line(0,1){8}}

\put(10,0){\line(1,0){8}}
\put(10,1.5){\line(2,1){2}} \put(14,1.5){\line(2,1){2}}
\put(12,2.5){\line(2,-1){2}} \put(16,2.5){\line(2,-1){2}}
\put(10,4){\line(1,0){8}}
\put(10,5.5){\line(2,1){2}} \put(14,5.5){\line(2,1){2}}
\put(12,6.5){\line(2,-1){2}} \put(16,6.5){\line(2,-1){2}}
\put(10,8){\line(1,0){8}}
\put(10,0){\line(0,1){8}}
\put(12,0){\line(0,1){8}}
\put(14,0){\line(0,1){8}}
\put(16,0){\line(0,1){8}}
\put(18,0){\line(0,1){8}}

\put(20,0){\line(1,0){8}}
\put(20,0.75){\line(2,1){1}} \put(22,0.75){\line(2,1){1}}   \put(24,0.75){\line(2,1){1}} \put(26,0.75){\line(2,1){1}}
\put(21,1.25){\line(2,-1){1}} \put(23,1.25){\line(2,-1){1}}\put(25,1.25){\line(2,-1){1}} \put(27,1.25){\line(2,-1){1}}
\put(20,2){\line(1,0){8}}
\put(20,2.75){\line(2,1){1}} \put(22,2.75){\line(2,1){1}}   \put(24,2.75){\line(2,1){1}} \put(26,2.75){\line(2,1){1}}
\put(21,3.25){\line(2,-1){1}} \put(23,3.25){\line(2,-1){1}}\put(25,3.25){\line(2,-1){1}} \put(27,3.25){\line(2,-1){1}}
\put(20,4){\line(1,0){8}}
\put(20,4.75){\line(2,1){1}} \put(22,4.75){\line(2,1){1}}   \put(24,4.75){\line(2,1){1}} \put(26,4.75){\line(2,1){1}}
\put(21,5.25){\line(2,-1){1}} \put(23,5.25){\line(2,-1){1}}\put(25,5.25){\line(2,-1){1}} \put(27,5.25){\line(2,-1){1}}
\put(20,6){\line(1,0){8}}
\put(20,6.75){\line(2,1){1}} \put(22,6.75){\line(2,1){1}}   \put(24,6.75){\line(2,1){1}} \put(26,6.75){\line(2,1){1}}
\put(21,7.25){\line(2,-1){1}} \put(23,7.25){\line(2,-1){1}}\put(25,7.25){\line(2,-1){1}} \put(27,7.25){\line(2,-1){1}}
\put(20,8){\line(1,0){8}}
\put(20,0){\line(0,1){8}}
\put(21,0){\line(0,1){8}}
\put(22,0){\line(0,1){8}}
\put(23,0){\line(0,1){8}}
\put(24,0){\line(0,1){8}}
\put(25,0){\line(0,1){8}}
\put(26,0){\line(0,1){8}}
\put(27,0){\line(0,1){8}}
\put(28,0){\line(0,1){8}}
\end{picture}
\end{center}
\caption{Quadrilateral mesh\label{fig:MiSP4-IrregularMesh}}
\end{figure}

\begin{table}[!h]\renewcommand{\baselinestretch}{1.10}\small
\begin{center}
 \caption{Results of error on quadrilateral mesh with MiSP4\label{table:MiSP4-SquarePlate-NonConvergenceOrder}}
\begin{tabular}{cccccccc}\hline
$t$ &    &$4\times 4$ &$8\times 8$ &$16\times 16$ &$32\times 32$ &$64\times 64$ &rate\\\hline
1   &$|w-w_h|_1$                  &0.2873    &0.1693    &0.0881    &0.0445    &0.0223    &0.9217\\
    &$|\beta-\beta_h|_1$          &0.0924    &0.0528    &0.0255    &0.0122    &0.0060    &0.9872\\
    &$\|M-M_h\|_0$                &0.0066    &0.0032    &0.0012    &0.0005    &0.0002    &1.1968\\
    &$\|\gamma-\gamma_h\|_0$      &0.0899    &0.0531    &0.0277    &0.0140    &0.0070    &0.9203\\
    &$(t+h)\|\gamma-\gamma_h\|_0$ &0.1285    &0.0645    &0.0306    &0.0147    &0.0072    &1.0398\\\hline
0.1 &$|w-w_h|_1$                  &0.0118    &0.0064    &0.0031    &0.0015    &0.0008    &0.9898\\
    &$|\beta-\beta_h|_1$          &0.0834    &0.0496    &0.0253    &0.0122    &0.0060    &0.9506\\
    &$\|M-M_h\|_0$                &0.0065    &0.0031    &0.0012    &0.0005    &0.0002    &1.1925\\
    &$\|\gamma-\gamma_h\|_0$      &0.0930    &0.0574    &0.0285    &0.0141    &0.0070    &0.9318\\
    &$(t+h)\|\gamma-\gamma_h\|_0$ &0.0493    &0.0181    &0.0059    &0.0022    &0.0009    &1.4475\\\hline
0.001 &$|w-w_h|_1$                &0.0096    &0.0051    &0.0024    &0.0012    &0.0006    &1.0151\\
    &$|\beta-\beta_h|_1$          &0.0835    &0.0475    &0.0245    &0.0120    &0.0060    &0.9525\\
    &$\|M-M_h\|_0$                &0.0066    &0.0032    &0.0013    &0.0006    &0.0003    &1.1477\\
    &$\|\gamma-\gamma_h\|_0$      &0.0947    &0.0702    &0.0470    &0.0355    &0.0310    &0.4031\\
    &$(t+h)\|\gamma-\gamma_h\|_0$ &0.0408    &0.0152    &0.0051    &0.0019    &0.0009    &1.3908\\\hline
1e-8  &$|w-w_h|_1$                &0.0096    &0.0051    &0.0024    &0.0012    &0.0006    &1.0151\\
    &$|\beta-\beta_h|_1$          &0.0835    &0.0475    &0.0245    &0.0120    &0.0060    &0.9525\\
    &$\|M-M_h\|_0$                &0.0066    &0.0032    &0.0013    &0.0006    &0.0003    &1.1466\\
    &$\|\gamma-\gamma_h\|_0$      &0.0947    &0.0703    &0.0470    &0.0356    &0.0315    &0.3976\\
    &$(t+h)\|\gamma-\gamma_h\|_0$ &0.0407    &0.0151    &0.0051    &0.0019    &0.0008    &1.3976\\\hline
\end{tabular}
\end{center}
\end{table}
\ \\

{\bf Acknowledgements.}
The work of the first author was partly supported by National Natural Science Foundation of China (11401492 and 11226333).
The work of the second author was partly supported by National Natural Science Foundation of China (11171239) and Major Research Plan of  National Natural Science Foundation of China (91430105).

\end{document}